\numberwithin{equation}{section}
\newtheorem{thm}{Theorem}[section]
\newtheorem*{thm*}{Theorem}
\newtheorem{lem}{Lemma}[section]
\newtheorem{cor}{Corollary}[section]
\newtheorem*{cor*}{Corollary}
\newtheorem{prop}{Proposition}[section]
\newtheorem{defi}{Definition}[section]
\newtheorem{Step}{Step}[section]
\begin{document}
\title[KGMP equations in high dimensions]
{Klein-Gordon-Maxwell equations in high dimensions}
\author{Pierre-Damien Thizy}
\address{Pierre-Damien Thizy, Universit\'e de Cergy-Pontoise, 
D\'epartement de Math\'ematiques, Site de 
Saint-Martin, 2 avenue Adolphe Chauvin, 
95302 Cergy-Pontoise cedex, 
France}
\email{pierre-damien.thizy@u-cergy.fr}

\begin{abstract}
We prove the existence of a mountain-pass solution and the a priori bound property for the electrostatic Klein-Gordon-Maxwell equations in high dimension.
\end{abstract}

\maketitle

 In what follows we let $(M,g)$ be a smooth closed Riemannian $n$-manifold, $n \ge 3$, closed meaning compact without boundary. We let $2^\star=\frac{2n}{n-2}$ be the critical Sobolev exponent for the embeddings of $H^1$, the Sobolev space of functions in $L^2$ with one derivative in $L^2$. 
We let also $p \in (2,2^\star]$, $q>0$, $m_0,m_1>0$, 
and $\omega\in(-m_0,m_0)$ be real numbers. The electrostatic Klein-Gordon-Maxwell system we investigate in this paper is written as
\begin{equation}\label{kg}
\begin{cases}
\Delta_g u+m_0^2 u=u^{p-1}+\omega^2(1-qv)^2 u\\
\Delta_g v+(m_1^2+q^2u^2) v= q u^2~,
\end{cases}
\end{equation}
where $\Delta_g=-\text{div}_g\nabla$ is the Laplace-Beltrami operator. This system arises  
when looking for standing waves solutions of the full Klein-Gordon-Maxwell system in Proca formalism (see Hebey and Truong \cite{kgmp4}). The first equation in \eqref{kg} 
is  energy critical when $p = 2^\star$. The second equation in \eqref{kg} is energy supercritical when $n\geq 5$. The Proca mass 
$m_1>0$ makes that the two equations in \eqref{kg} are strongly coupled one with another. 

\medskip The system \eqref{kg}, in Proca form, has been investigated by Druet and Hebey \cite{kgmp3}, 
Druet, Hebey and V\'etois \cite{kgmp4DHV}, Hebey and Truong \cite{kgmp4}, and Hebey and Wei \cite{HebWei1} in the case 
of $3$ and $4$-dimensional manifolds (see also Hebey and Wei \cite{schrp}). 
In these dimensions, the second equation in \eqref{kg} is either subcritical or critical and we do have a well established variational framework for the system. 
When $n \ge 5$, as already mentioned, the second equation in \eqref{kg} is supercritical. The problem comes with the $u^2v$-term in the left had side of the equation as well as with the $u^2$-term in the right hand side of the equation (there holds that $2+1 > 2^\star-1$ when $n \ge 5$, and we even have that $2 > 2^\star-1$ when $n \ge 7$). A priori 
we lose a variational framework for the system in these dimensions but, as we will see in 
Section \ref{VarStrSecEqt}, such a variational framework can be restored thanks to the very 
special structure of the second equation in \eqref{kg}. Then we can ask the question of the existence of solutions of 
\eqref{kg} with special variational structures, and more precisely the question of the existence of solutions with a mountain-pass structure. 
We investigate this question in this paper, as well as the more involved question of the existence 
of a priori bounds for arbitrary solutions of \eqref{kg}. The first result we prove in this paper is concerned with $p < 2^\star$. The $3$-dimensional case in Theorem 
\ref{SubCritThm} below 
is due to Druet and Hebey \cite{kgmp3}, the $4$-dimensional case to Hebey and Truong \cite{kgmp4}, and the $n \ge 5$ cases are new. As one can check, it  
follows from  the theorem that when 
the first equation in \eqref{kg} is subcritical, then the possible supercriticality of the second equation has no importance. The notion of a smooth positive 
mountain-pass solution of \eqref{kg} is defined in Section \ref{MountainPassSol}. 

\begin{thm}[Subcritical case]\label{SubCritThm}  Let $(M,g)$ be a smooth closed Riemannian $n$-manifold, $n \ge 3$, $m_0, m_1, q>0$ be positive real numbers, and $p \in (2,2^\star)$. For any 
$\omega\in (-m_0,m_0)$, there exists 
a smooth positive mountain-pass solution $(u,v)$ for \eqref{kg}. Moreover, there also exists $C>0$ such that $\|u\|_{C^{2,\theta}}\leq C$ and $\|v\|_{C^{2,\theta}}\leq C$ for any positive solution 
$(u,v)$ of \eqref{kg} and all $\omega\in (-m_0,m_0)$.
\end{thm}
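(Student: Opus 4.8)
The plan is to split the statement into two parts: existence of a mountain-pass solution, and the a priori bound. For the existence part, the strategy is to exploit the special structure of the second equation to eliminate $v$ and reduce to a single equation for $u$. First I would observe that for each fixed $u \in H^1(M)$, $u \ge 0$, the second equation $\Delta_g v + (m_1^2 + q^2 u^2)v = qu^2$ admits a unique solution $v = V_u$ with $0 \le V_u \le 1/q$ (the bounds follow from the maximum principle: $v=0$ is a subsolution and $v=1/q$ is a supersolution). Since $0 \le V_u \le 1/q$ the supercriticality of the second equation is harmless — the right-hand side $qu^2$ and the coefficient $q^2u^2$ are handled because $V_u$ is a priori bounded, so elliptic regularity gives $V_u$ as smooth as $u$ allows. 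This is exactly the variational framework to be restored in Section \ref{VarStrSecEqt}. One then checks that $u \mapsto V_u$ is $C^1$ and that the functional
\[
I(u) = \frac{1}{2}\int_M \bigl(|\nabla u|_g^2 + m_0^2 u^2\bigr)\,dv_g - \frac{1}{p}\int_M (u^+)^p\,dv_g - \frac{\omega^2}{2}\int_M (1-qV_u)^2 (u^+)^2\,dv_g
\]
has a critical point $u$ which then, together with $v = V_u$, solves \eqref{kg}; the key identity is that the derivative of the $v$-dependent term reproduces exactly the $\omega^2(1-qv)^2u$ term because the variation of $V_u$ drops out by the equation it satisfies.

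Next I would verify that $I$ has the mountain-pass geometry. Since $0 \le 1-qV_u \le 1$ and $\omega^2 < m_0^2$, the quadratic-in-$u$ part $\frac{1}{2}\int (|\nabla u|^2 + m_0^2 u^2) - \frac{\omega^2}{2}\int (1-qV_u)^2 u^2$ is bounded below by $\frac{m_0^2-\omega^2}{2m_0^2}\|u\|_{H^1}^2$ up to a constant, so $I$ is coercive near $0$ modulo the subcritical term $\frac{1}{p}\int (u^+)^p$ with $p>2$, giving a local minimum structure at the origin; and for a fixed positive $u_0$, $I(t u_0) \to -\infty$ as $t \to +\infty$ because the $-t^p/p$ term dominates. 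The crucial point for a genuine mountain-pass argument is the Palais-Smale condition: here, because $p < 2^\star$, the embedding $H^1(M) \hookrightarrow L^p(M)$ is compact, so any Palais-Smale sequence is bounded (standard, using coercivity of the quadratic part) and then converges strongly — no bubbling can occur precisely because of subcriticality. This is where the subcritical hypothesis does all the work and the possible supercriticality of the second equation is irrelevant, since $V_u$ depends continuously and compactly on $u$ through $u^2 \in L^{n/(n-2)}$ type bounds. I would then apply the mountain-pass theorem of Ambrosetti-Rabinowitz to get a critical point $u$ at the mountain-pass level $c>0$; testing the equation against $u^-$ shows $u \ge 0$, and then $u > 0$ and smoothness follow from the maximum principle and elliptic regularity (bootstrapping, using $0\le V_u \le 1/q$ to control all the coupling terms), giving a smooth positive solution $v = V_u$ as well.

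For the a priori bound, the plan is a blow-up argument by contradiction. Suppose there are sequences $\omega_\alpha \in (-m_0,m_0)$ and positive solutions $(u_\alpha, v_\alpha)$ with $\|u_\alpha\|_{C^{2,\theta}} + \|v_\alpha\|_{C^{2,\theta}} \to \infty$. Since $0 \le v_\alpha \le 1/q$ always (maximum principle again), the $v_\alpha$ are uniformly bounded in $L^\infty$, and $\omega_\alpha^2(1-qv_\alpha)^2 \le m_0^2$, so the first equation reads $\Delta_g u_\alpha + m_0^2 u_\alpha = u_\alpha^{p-1} + f_\alpha u_\alpha$ with $0 \le f_\alpha \le m_0^2$. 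One first obtains an $H^1$ bound on $u_\alpha$: multiply the first equation by $u_\alpha$ and use that the resulting terms are controlled by $\|u_\alpha\|_{L^p}^p$, then use a Pohozaev-type or test-function argument (test with $u_\alpha$ and with a constant, exploiting $p < 2^\star$) to close the estimate; alternatively one can argue directly that the mountain-pass-type energy is bounded, but for arbitrary solutions one needs the Pohozaev identity on $M$. Once $u_\alpha$ is bounded in $H^1$, subcriticality and elliptic theory give $L^\infty$ and then $C^{2,\theta}$ bounds on $u_\alpha$ by bootstrapping, and then the same for $v_\alpha$ via its equation — contradicting the assumed blow-up. \textbf{The main obstacle} I anticipate is obtaining the uniform $H^1$ bound on arbitrary positive solutions $u_\alpha$ (as opposed to mountain-pass solutions, whose energy is controlled by construction): this requires a careful integral estimate, likely combining testing against $u_\alpha$ with a Pohozaev identity to get an a priori $L^{2^\star}$ or $L^p$ bound, after which subcriticality makes the rest of the regularity bootstrap routine. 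The coupling term $\omega^2(1-qv_\alpha)^2 u_\alpha$ is benign throughout because $v_\alpha$ is trapped in $[0,1/q]$, so — as the introduction promises — the supercriticality of the second equation genuinely plays no role here.
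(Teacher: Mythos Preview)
Your existence strategy matches the paper's: reduce to a single functional in $u$ via the map $u\mapsto V_u=\Phi(u)$, check mountain-pass geometry, and use compactness of $H^1\hookrightarrow L^p$ for $p<2^\star$ to pass to the limit. Two points need correction. First, the reduced functional you wrote is not the right one: the last term should be $-\frac{\omega^2}{2}\int_M (1-qV_u)\,u^2\,dv_g$, \emph{without} the square. The identity you invoke (``the variation of $V_u$ drops out'') is exactly the statement that $I(u)=S(u,V_u)$, and when you simplify $S(u,V_u)$ using the equation for $V_u$ the quadratic $(1-qV_u)^2$ collapses to $(1-qV_u)$; the square then reappears only after differentiation (this is Lemma~\ref{l2}: $D\Psi(u).\varphi=\int (1-q\Phi(u))^2u\varphi$ with $\Psi(u)=\frac12\int(1-q\Phi(u))u^2$). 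With your functional the $V_u$-variation does not cancel. Second, in dimensions $n\ge 5$ the paper does \emph{not} show that $u\mapsto V_u$ is $C^1$ --- only locally H\"older (Lemma~\ref{l1}) --- and instead proves directly that the composite $\Psi$ is $C^1$. Your claim that $V_u$ is $C^1$ is precisely the step that fails in the supercritical regime and is bypassed by this computation.

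For the a priori bound your approach diverges from the paper's and has a genuine gap at the step you yourself flag: obtaining a uniform $H^1$ bound on \emph{arbitrary} positive solutions. Testing against constants gives only an $L^{p-1}$ bound, and there is no Pohozaev identity on a closed manifold that closes the estimate to $L^p$ or $H^1$. The paper avoids this entirely by a direct $L^\infty$ blow-up: if $\max_M u_\alpha\to\infty$, rescale $\tilde u_\alpha(x)=\mu_\alpha^{2/(p-2)}u_\alpha(\exp_{x_\alpha}(\mu_\alpha x))$ with $\mu_\alpha=u_\alpha(x_\alpha)^{-(p-2)/2}$; since $0\le v_\alpha\le 1/q$ the potential is uniformly bounded, so the limit solves $\Delta u=u^{p-1}$ in $\mathbb{R}^n$ with $0\le u\le 1=u(0)$, contradicting the Gidas--Spruck Liouville theorem for subcritical exponents. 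This gives the $L^\infty$ bound directly, with no $H^1$ bound needed, and then the $C^{2,\theta}$ bounds on both $u$ and $v$ follow by bootstrapping.
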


In the critical case of the first equation in \eqref{kg} there holds that $p = 2^\star$.  Then, as shown in Hebey and Wei \cite{schrp}, and Druet, Hebey and V\'etois \cite{kgmp4DHV}, 
resonant states appear in particular situations, and we cannot get a priori bounds for all phases as in Theorem \ref{SubCritThm}. Our second result establishes that 
the conclusions of Theorem \ref{SubCritThm} are still valid when the potential $m_0^2$ in the first equation of \eqref{kg} is geometrically small, despite the supercriticality of the 
second equation in high dimensions. The $3$-dimensional case in 
Theorem \ref{CritThm} is due to Druet and Hebey \cite{kgmp3}, the $4$-dimensional case to Hebey and Truong \cite{kgmp4}, and the $n \ge 5$ cases, as for Theorem \ref{SubCritThm}, are new. 

\begin{thm}[Critical case]\label{CritThm}  Let $(M,g)$ be a smooth closed Riemannian $n$-manifold, $n \ge 3$, 
$m_0, m_1, q > 0$ be positive real numbers, and $p = 2^\star$. Assume that 
 \begin{equation}\label{prel1}
 m_0^2 < \frac{n-2}{4(n-1)} S_g
 \end{equation}
somewhere in $M$, where $S_g$ is the scalar curvature of $g$. Then, for any 
$\omega\in (-m_0,m_0)$, there exists 
a smooth positive mountain-pass solution $(u,v)$ for \eqref{kg}. If we assume that \eqref{prel1} holds true everywhere in $M$, then 
there also exists $C>0$ such that $\|u\|_{C^{2,\theta}}\leq C$ and $\|v\|_{C^{2,\theta}}\leq C$ for any positive solution 
$(u,v)$ of \eqref{kg} and all $\omega\in (-m_0,m_0)$.
\end{thm}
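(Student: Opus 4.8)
The plan is to prove the two assertions of Theorem~\ref{CritThm} separately, both relying on the variational reduction of Section~\ref{VarStrSecEqt}. Recall from there that for each $u\in H^1$ the second equation of \eqref{kg} has a unique solution $v=\Phi(u)$, that $0\le q\Phi(u)\le1$ by the maximum principle, and that the solutions $(u,v)$ of \eqref{kg} with $u\ge0$ correspond exactly to the nonnegative critical points of
\[
I_\omega(u)=\frac12\int_M\bigl(|\nabla u|_g^2+m_0^2u^2\bigr)\,dv_g-\frac1{2^\star}\int_M u_+^{2^\star}\,dv_g-\frac{\omega^2}{2}\int_M\bigl(1-q\Phi(u)\bigr)u^2\,dv_g
\]
on $H^1$, where $u_+=\max(u,0)$, and where the differentiability and compactness properties of $\Phi$ are those established in Section~\ref{VarStrSecEqt}.

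\textbf{Existence of a mountain-pass solution.} Since $0\le q\Phi(u)\le1$ and $\omega^2<m_0^2$, one has $I_\omega(u)\ge\tfrac12\int_M(|\nabla u|_g^2+(m_0^2-\omega^2)u^2)\,dv_g-\tfrac1{2^\star}\int_M u_+^{2^\star}\,dv_g$, so $I_\omega$ is positive on a small sphere of $H^1$, while $I_\omega(tu_0)\to-\infty$ as $t\to+\infty$ for any fixed $u_0>0$ since $(1-q\Phi(tu_0))\le1$; this gives the mountain-pass geometry of Section~\ref{MountainPassSol} and an associated level $c_\omega>0$. It then remains to show that the relevant Palais--Smale (or Cerami) sequences at the level $c_\omega$ are bounded in $H^1$---a point which, in high dimension, is not automatic and rests on the fine properties of $\Phi$ from Section~\ref{VarStrSecEqt}---and that
\[
c_\omega<\frac1n K_n^{-n},
\]
$K_n$ being the sharp constant of the Sobolev embedding $H^1\hookrightarrow L^{2^\star}$ on $(M,g)$. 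Below this first noncompactness threshold the only possible loss of compactness, a standard bubble of energy $\tfrac1n K_n^{-n}$, is excluded by the usual concentration-compactness analysis (the $\omega^2$- and $\Phi$-terms being lower order and compact), so $I_\omega$ satisfies the Palais--Smale condition at $c_\omega$. To get the strict inequality I would evaluate $\max_{t>0}I_\omega(tu_\varepsilon)$ on the usual Aubin-type test functions $u_\varepsilon$ concentrating at a point $x_0$ where \eqref{prel1} holds: near the concentration point $q\Phi(u_\varepsilon)\to1$, so the lower-order $\omega^2$-term contributes negligibly there and the expansion is governed, exactly as in the Yamabe problem, by the sign of $m_0^2-\tfrac{n-2}{4(n-1)}S_g$ at $x_0$, which \eqref{prel1} makes favourable (for $n=3$ one replaces this computation by the positive mass theorem, recovering Druet and Hebey \cite{kgmp3}). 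This produces a critical point $u$ of $I_\omega$ with $I_\omega(u)=c_\omega>0$, hence $u\not\equiv0$; then $u>0$ by the strong maximum principle, $v=\Phi(u)>0$, and the smoothness of $(u,v)$ follows by elliptic bootstrap using the structure of the second equation of \eqref{kg} as in Section~\ref{VarStrSecEqt}.

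\textbf{A priori bounds.} Here I would argue by contradiction, assuming the existence of phases $\omega_\alpha\in(-m_0,m_0)$ and positive solutions $(u_\alpha,v_\alpha)$ of \eqref{kg} with $\|u_\alpha\|_{C^{2,\theta}}+\|v_\alpha\|_{C^{2,\theta}}\to+\infty$. A first step is to see that it suffices to rule out $\|u_\alpha\|_\infty\to+\infty$: if $\|u_\alpha\|_\infty$ stays bounded, then $0\le v_\alpha\le1/q$ together with elliptic regularity applied in turn to the two equations of \eqref{kg} bounds both $u_\alpha$ and $v_\alpha$ in $C^{2,\theta}$. So assume $\|u_\alpha\|_\infty\to+\infty$ and rewrite the first equation of \eqref{kg} as $\Delta_g u_\alpha+h_\alpha u_\alpha=u_\alpha^{2^\star-1}$ with $h_\alpha:=m_0^2-\omega_\alpha^2(1-qv_\alpha)^2$, where crucially $0<m_0^2-\omega_\alpha^2\le h_\alpha\le m_0^2$ by the maximum principle. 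One then carries out, for this critical equation, the blow-up analysis of Druet---finite blow-up set, rescaling around each blow-up point onto a standard bubble, and the sharp pointwise and integral control of $u_\alpha$ needed for a localized Pohozaev identity. The Pohozaev identity then forces, at any blow-up point $x_0$ and using $h_\alpha\le m_0^2$, the inequality
\[
\frac{n-2}{4(n-1)}S_g(x_0)\le m_0^2
\]
(for $n=3$ the positive mass theorem enters, as in Druet and Hebey \cite{kgmp3}), which contradicts the assumption that \eqref{prel1} holds everywhere.

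The main obstacle, and what is genuinely new for $n\ge5$, is coupling this blow-up analysis with the \emph{supercritical} second equation of \eqref{kg}. Away from the blow-up set $u_\alpha$ is locally bounded, so $v_\alpha$ is controlled in $C^{2,\theta}_{loc}$ there and converges along a subsequence; but near a blow-up point the datum $qu_\alpha^2$ of the second equation is genuinely supercritical, so standard elliptic estimates do not directly bound $v_\alpha$, and one must check that $h_\alpha$ nevertheless has the compactness required to feed Druet's scheme. Here the special structure of the second equation exploited in Section~\ref{VarStrSecEqt} is decisive: the bound $0\le v_\alpha\le1/q$ is free, and rewriting the equation as $\Delta_g v_\alpha+m_1^2v_\alpha=q^2u_\alpha^2\bigl(\tfrac1q-v_\alpha\bigr)$ shows that wherever $u_\alpha$ is large the confining coefficient $q^2u_\alpha^2$ forces $v_\alpha$ towards $1/q$, so that $q\Phi(u_\alpha)\to1$ and $h_\alpha\to m_0^2$ near the blow-up points, with the regularity needed for the Pohozaev scheme. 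Checking that this control on $v_\alpha$ is enough to push every step of the blow-up analysis through, in all dimensions $n\ge5$, is the technical heart of the argument, and it is precisely where the framework restored in Section~\ref{VarStrSecEqt} plays its role.
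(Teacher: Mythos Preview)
Your outline is mostly on target, but there are two specific points where it diverges from what actually works, one minor and one substantial.

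\medskip\textbf{Existence.} You propose showing $c_\omega<\tfrac1n K_n^{-n}$ by arguing that $q\Phi(u_\varepsilon)\to1$ near the concentration point, so that the $\omega^2$-term is negligible in the test-function expansion. This is more than is needed: since $0\le q\Phi(u)\le1$ and the test function $u_\varepsilon\ge0$, the $\omega^2$-term in $I_\omega$ is $\le0$, hence $I_\omega(t u_\varepsilon)\le \mathcal H(t u_\varepsilon)$ for all $t\ge0$, where $\mathcal H$ is the pure Yamabe-type functional. The paper (see \eqref{eq20}) uses exactly this sign, and the standard Aubin computation for $\mathcal H$ finishes at once. Your route would work but introduces an unnecessary layer of analysis of $\Phi(u_\varepsilon)$.

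\medskip\textbf{A priori bounds.} Here there is a genuine gap. You write that the Pohozaev identity ``forces, at any blow-up point $x_0$ and using $h_\alpha\le m_0^2$, the inequality $\tfrac{n-2}{4(n-1)}S_g(x_0)\le m_0^2$.'' But the pointwise bound $h_\alpha\le m_0^2$ alone is not enough: in the localized Pohozaev identity \eqref{eq112} the $h_\alpha$-contribution is $\int h_\alpha\Psi_\alpha$ with
\[
\Psi_\alpha=u_\alpha X_\alpha(\nabla u_\alpha)+\tfrac{n-2}{2n}(\operatorname{div}_gX_\alpha)u_\alpha^2,
\]
and after rescaling $\Psi_\alpha$ converges to $u_0\langle x,\nabla u_0\rangle+\tfrac{n-2}{2}u_0^2$, which \emph{changes sign} across $|x|=\sqrt{n(n-2)}$ (see \eqref{eq13}). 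Hence a one-sided bound on $h_\alpha$ cannot by itself control the sign of $\int h_\alpha\Psi_\alpha$, and the inequality you claim does not follow.

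What is actually needed is your second remark, that $qv_\alpha\to1$ where $u_\alpha$ is large, but made quantitative in the right sense. The paper proves (Lemma~\ref{controlv2}, Lemma~\ref{controlv}) that after the blow-up rescaling $\hat v_\alpha\to\tfrac1q$ in $L^p_{loc}$ and a.e., and then uses dominated convergence against the pointwise bound \eqref{eq14} on $\hat\Psi_\alpha$ to show (Lemma~\ref{crucial}) that the extra term $\tilde{\mathcal R}_\alpha=\omega_\alpha^2\int(qv_\alpha-1)^2\Psi_\alpha$ is $o(\mu_\alpha^2)$. Only then does the Pohozaev balance reduce to the clean comparison \eqref{eq114} between $m_0^2$ and $\tfrac{n-2}{4(n-1)}S_g(\bar x_0)$. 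Note that this convergence is \emph{not} ``with the regularity needed for the Pohozaev scheme'' in the classical sense: Corollary~\ref{NonC0Conv} shows $v_\alpha$ fails to converge even in $C^0$, so $h_\alpha$ does not converge in $C^0$ either, and one is genuinely outside the $C^1$-potential framework of Druet. The heart of the paper's argument is precisely that a.e. convergence after rescaling plus the sharp decay \eqref{eqp61} are enough to salvage the key integral estimate, and this is what your sketch should spell out instead of invoking $h_\alpha\le m_0^2$.
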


In the process of the paper we also prove that the phase compensation phenomenon, established in Druet and Hebey \cite{kgmp3} when $n = 3$, and 
Hebey and Truong \cite{kgmp4} when $n = 4$, stops to hold true when $n \ge 5$ (see Corollary \ref{LossPhaseComp}). In addition, we establish that the gauge potential 
$v$ in \eqref{kg} cannot be controlled in H\"older spaces 
$C^{0,\theta}$ if we do not get a similar control on $u$ (see Corollary \ref{NonC0Conv}). We also discuss a model case where 
\eqref{prel1} is not satisfied in Proposition \ref{necess} but the a priori bound property remains valid.

\medskip The above two theorems prevent the full KGMP system from having standing waves solutions with arbitrarily large amplitude. Better, they claim the compactness in the $C^2$-topology of the set of non-negative solutions of \eqref{kg}, as the phase $\omega$ varies in $(-m_0,m_0)$.

\bigskip\noindent \textbf{Acknowledgements.} The author warmly thanks Emmanuel Hebey for his constant support and valuable remarks at every stage of this work, and Bruno Premoselli for many helpful discussions and comments.

\section{A variational setting for the second equation}\label{VarStrSecEqt}

Following a very nice idea  due to Benci and Fortunato \cite{BenciFortunato}, we introduce, from the formal point of view, the auxiliary functional $\Phi$ given by
 \begin{equation}\label{eqphi11}
 \Delta_g \Phi(u) + (m_1^2+q^2 u^2) \Phi(u) = qu^2~.
 \end{equation}
The definition of $\Phi: H^1 \to H^1$ makes sense when $n=3,4$, as shown in  Druet and Hebey \cite{kgmp3} and 
Hebey and Truong \cite{kgmp4}. Moreover, there holds in these dimensions that
\begin{equation}\label{eqphi12}
 0\leq \Phi(u) \leq \frac{1}{q}, \quad \text{for all }u\in H^1.
 \end{equation}
Now we aim to give a meaning to equation \eqref{eqphi11} when $n\geq 5$ and to define its solution $\Phi(u)$ in some suitable sense for all $u\in H^1$. As already mentioned, \eqref{eqphi11} is a priori not variational anymore in $H^1$ when $n \ge 5$ because of the cubic term $u^2 \Phi(u)$. Adapting the ideas in Hebey \cite{solitary} to the closed setting, we prove that we can give a meaning to $\Phi$ when $n\geq 5$ which, as shown in Section \ref{DiffAuxilEnergy}, will be sufficient to get a variational characterisation of the first equation in \eqref{kg}.
  
 \begin{defi}\label{DefWS}
 Let $u\in H^1$ be given. A function $\Phi(u)\in L^\infty\cap H^1$ is said to be a solution of \eqref{eqphi11} in the \textbf{restricted weak sense} if
 \begin{equation}\label{eqphi125}
 \int_M \langle\nabla \Phi, \nabla \varphi \rangle_{g} dv_g+ \int_M (m_1^2+q^2u^2) \Phi \varphi dv_g=q \int_M u^2 \varphi dv_g,
 \end{equation}
 for all $\varphi\in H^1\cap L^\infty$.
\end{defi} 
 
\noindent When $n=3, 4$, we can define $\Phi$ as a true variational solution of \eqref{eqphi11}. It is then locally Lipschitz, differentiable, 
and its differential $D\Phi(u) = V_u$ at $u \in H^1$ is given as the unique solution of
$$\Delta_gV_u(\varphi) + \left(m_1^2 + q^2u^2\right)V_u(\varphi) = 2qu\left(1-q\Phi(u)\right)\varphi$$
for all $\varphi \in H^1$. 
When $n\ge 5$, with the notion of weak solution given in Definition \ref{DefWS}, we can prove that the following result holds true.

 \begin{lem}\label{l1}
 Let $(M,g)$ be a closed Riemannian $n$-manifold, $n\geq 5$, $q$, $m_1>0$. There exists $\Phi : H^1\to H^1$ such that 
 \begin{equation}\label{eqphi}
 \Delta_g \Phi(u) + (m_1^2+q^2 u^2) \Phi(u) = qu^2
 \end{equation}
 in the restricted weak sense, and $0\leq \Phi(u) \leq \frac{1}{q}$ 
 for all $u\in H^1$. Moreover, $\Phi$ is locally H\"olderian continuous with coefficient $\theta_n=\min(\frac{n-2}{2(n-4)},1)$.
 \end{lem}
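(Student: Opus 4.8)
The plan is to construct $\Phi(u)$ for fixed $u \in H^1$ by a monotone iteration / sub- and super-solution scheme, and then to establish the Hölder dependence on $u$ via elliptic estimates applied to the difference equation. First I would fix $u \in H^1$ and set up the iteration: take $\Phi_0 = 0$ as an initial sub-solution and $\Phi_* = 1/q$ as a super-solution (indeed $\Delta_g(1/q) + (m_1^2 + q^2u^2)(1/q) = (m_1^2 + q^2u^2)/q \ge qu^2$ since $m_1^2/q > 0$), and define inductively $\Phi_{k+1}$ as the solution of the \emph{linear} problem $\Delta_g \Phi_{k+1} + m_1^2 \Phi_{k+1} = qu^2 - q^2u^2\Phi_k$. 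The linear operator $\Delta_g + m_1^2$ is coercive on $H^1$, so each $\Phi_{k+1}$ is well-defined in $H^1$; since the right-hand side $qu^2(1 - q\Phi_k)$ lies in $L^1$ and is actually in $L^{(2^\star)'}$-type spaces only marginally, I would argue $\Phi_{k+1} \in L^\infty$ by a Moser/De Giorgi truncation argument using that $0 \le \Phi_k \le 1/q$ forces $0 \le qu^2(1-q\Phi_k) \le qu^2$, combined with the fact that $qu^2 \in L^{n/2}$ exactly at the borderline — here one uses the structure $q^2u^2\Phi_{k+1}$ that can be kept on the left to gain coercivity, exactly as in the definition of the restricted weak sense. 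By the maximum principle and the sign of the reaction term, $0 \le \Phi_k \le \Phi_{k+1} \le 1/q$ for all $k$, so $\Phi_k \uparrow \Phi(u)$ pointwise and in $H^1$ (the $H^1$ bound comes from testing with $\Phi_{k+1}$ itself), and the limit $\Phi(u)$ solves \eqref{eqphi} in the restricted weak sense with $0 \le \Phi(u) \le 1/q$. Uniqueness in the class $L^\infty \cap H^1$ follows by testing the difference of two solutions against itself and using that the extra term $q^2u^2(\Phi - \tilde\Phi)^2 \ge 0$.

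Next I would turn to the Hölder continuity of $u \mapsto \Phi(u)$. Given $u, \tilde u \in H^1$, write $w = \Phi(u) - \Phi(\tilde u)$; subtracting the two equations gives
\begin{equation*}
\Delta_g w + (m_1^2 + q^2 u^2) w = q(u^2 - \tilde u^2) - q^2(u^2 - \tilde u^2)\Phi(\tilde u) = q(u^2 - \tilde u^2)(1 - q\Phi(\tilde u)).
\end{equation*}
Testing against $w$ and using $m_1^2 + q^2u^2 \ge m_1^2 > 0$ together with $|1 - q\Phi(\tilde u)| \le 1$ yields
$\|w\|_{H^1}^2 \lesssim \int_M |u^2 - \tilde u^2|\,|w|\,dv_g \le \|u^2 - \tilde u^2\|_{(2^\star)'} \|w\|_{2^\star}$, and since $u^2 - \tilde u^2 = (u - \tilde u)(u + \tilde u)$ we get $\|u^2 - \tilde u^2\|_{(2^\star)'} \lesssim \|u - \tilde u\|_{2^\star}\|u + \tilde u\|_{?}$ — here the exact exponent bookkeeping produces the dimensional constraint and the exponent $\frac{n-2}{2(n-4)}$: when $n \ge 7$ the product $(u-\tilde u)(u+\tilde u)$ cannot be controlled linearly in $\|u - \tilde u\|_{H^1}$ because $2 > 2^\star - 1$, and one must interpolate using the \emph{a priori} $L^\infty$ bound $0 \le \Phi \le 1/q$ to trade integrability, which is precisely why the Hölder exponent degenerates to $\theta_n = \min\!\big(\frac{n-2}{2(n-4)}, 1\big)$ rather than staying Lipschitz. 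The local Hölder statement means: on bounded sets of $H^1$, $\|\Phi(u) - \Phi(\tilde u)\|_{H^1} \le C \|u - \tilde u\|_{H^1}^{\theta_n}$.

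The main obstacle I anticipate is exactly this last exponent computation: one must carefully exploit that $\Phi(\tilde u)$ is bounded in $L^\infty$ by $1/q$ (so that $(u^2 - \tilde u^2)\Phi(\tilde u)$ inherits whatever integrability $u^2 - \tilde u^2$ has, no worse) while simultaneously keeping the bad cubic term $q^2 u^2 w$ on the left where it has a sign, and then optimize a Hölder interpolation between the $H^1$-norm control and the uniform $L^\infty$-bound on $w$ (namely $\|w\|_\infty \le 1/q$). Writing $\|w\|_{2^\star} \le \|w\|_\infty^{1-\alpha}\|w\|_{L^r}^{\alpha}$ for suitable $r$ and feeding this back into the energy inequality produces an estimate of the form $\|w\|_{H^1}^{2} \lesssim \|u - \tilde u\|_{H^1}^{\beta}$ with $\beta$ depending on $n$; solving for the best $\beta$ gives $2\theta_n$. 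Everything else — coercivity of $\Delta_g + m_1^2$, the maximum principle giving $0 \le \Phi \le 1/q$, the monotonicity of the iteration, and the passage to the limit — is standard once the borderline $L^\infty$-regularity of the linear iterates is secured, and for that the key point is again the favorable sign making the reaction term bounded by $qu^2 \in L^{n/2}$ so that a single Moser iteration step (or the Brezis–Kato argument) closes.
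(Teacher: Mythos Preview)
Your construction of $\Phi(u)$ via the iteration $\Delta_g\Phi_{k+1}+m_1^2\Phi_{k+1}=qu^2(1-q\Phi_k)$ breaks down in two places. First, for $n\ge 7$ the right-hand side $qu^2(1-q\Phi_k)$ only lies in $L^{n/(n-2)}$, which is \emph{not} contained in $H^{-1}=(H^1)'$ (one needs the exponent to be at least $(2^\star)'=\frac{2n}{n+2}$, and $\frac{n}{n-2}<\frac{2n}{n+2}$ precisely when $n>6$), so the linear step is not even well-posed in $H^1$. Your suggested fix of ``keeping $q^2u^2\Phi_{k+1}$ on the left'' collapses the scheme to the full equation you are trying to solve. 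Second, even when the iterates exist (say $n=5,6$), the monotonicity claim $0\le\Phi_k\le\Phi_{k+1}\le 1/q$ is false: from $\Phi_0=0$ you get $\Delta_g\Phi_1+m_1^2\Phi_1=qu^2$, and then $\Delta_g(1/q-\Phi_1)+m_1^2(1/q-\Phi_1)=m_1^2/q-qu^2$ has no sign, so $\Phi_1\le 1/q$ need not hold; moreover $\Delta_g(\Phi_1-\Phi_2)+m_1^2(\Phi_1-\Phi_2)=q^2u^2\Phi_1\ge0$ gives $\Phi_2\le\Phi_1$, the opposite of what you claim.

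The paper sidesteps both issues by truncating $u$ rather than iterating in $\Phi$: set $u_\Lambda=\min(|u|,\Lambda)\in L^\infty$, so that the equation $\Delta_g\Phi_\Lambda+(m_1^2+q^2u_\Lambda^2)\Phi_\Lambda=qu_\Lambda^2$ is genuinely variational in $H^1$ and the maximum principle gives $0\le\Phi_\Lambda\le 1/q$ directly. One then shows $(\Phi_{\Lambda_p})_p$ is Cauchy in $H^1$ as $\Lambda_p\to\infty$: subtracting the equations and testing against the difference $w=\Phi_{\Lambda_p}-\Phi_{\Lambda_t}$ yields $\|w\|_{H^1}^2\lesssim\|u_{\Lambda_p}-u_{\Lambda_t}\|_{L^{2^\star}}\|u\|_{L^{2^\star}}\|w\|_{L^{2^\star/(2^\star-2)}}$, and since $2^\star/(2^\star-2)\ge 2^\star$ for $n\ge6$ one interpolates $\|w\|_{L^{2^\star/(2^\star-2)}}\le C\|w\|_{L^\infty}^{3-2^\star}\|w\|_{H^1}^{2^\star-2}$ using the uniform bound $\|w\|_{L^\infty}\le 2/q$. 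This is exactly the interpolation you allude to at the end, and it gives $\|w\|_{H^1}^{4-2^\star}\lesssim\|u_{\Lambda_p}-u_{\Lambda_t}\|_{H^1}$, whence both the existence of the limit $\Phi(u)$ and (by the same computation with $u,v$ in place of $u_{\Lambda_p},u_{\Lambda_t}$) the H\"older exponent $\theta_n=\frac{1}{4-2^\star}=\frac{n-2}{2(n-4)}$. So your H\"older argument is essentially right in spirit; it is the existence step that needs to be replaced.
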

 
\begin{proof}[Proof of Lemma \ref{l1}.] Let $u\in H^1$ be given. Let $\Lambda>0$ and $u_\Lambda=\min(|u|,\Lambda)$. Then $u_\Lambda \in L^\infty$ and the equation
 \begin{equation}\label{eq2}
 \Delta_g \Phi_\Lambda(u)+(m_1^2+q^2u_\Lambda^2)\Phi_\Lambda(u)=q u_\Lambda^2 
 \end{equation}
 has one and only one solution $\Phi_\Lambda(u)\in H^1$. By the maximum principle, $\Phi_\Lambda(u)\geq 0$ and writing 
 $$\Delta_g\left(\frac{1}{q}-\Phi_\Lambda(u)\right)+(m_1^2+q^2u_\Lambda^2)\left(\frac{1}{q}-\Phi_\Lambda(u)\right)=\frac{m_1^2}{q}>0, $$
 we get that $\Phi_\Lambda(u)\leq \frac{1}{q}$. We take now $(\Lambda_p)_{p}$ an increasing sequence of positive real numbers such that $\Lambda_p\to +\infty$ as $p\to +\infty$. 
 For $p$, $t\in\mathbb{N}$,
 \begin{equation}\label{AddedEqt1LemVar}
 \begin{split}
& \Delta_g(\Phi_{\Lambda_p}(u)-\Phi_{\Lambda_t}(u))+(m_1^2+q^2u_{\Lambda_p}^2)(\Phi_{\Lambda_p}(u)-\Phi_{\Lambda_t}(u))\\
&= q(u_{\Lambda_p}^2-u_{\Lambda_t}^2)(1-q\Phi_{\Lambda_t}(u))~.
\end{split}
\end{equation}
We multiply \eqref{AddedEqt1LemVar} by $\Phi_{\Lambda_p}(u)-\Phi_{\Lambda_t}(u)\in H^1$ and integrate over $M$. We assume first $n\geq 6$. Using a H\"older inequality, 
and the Sobolev inequality, we get 
the existence of $C>0$, independent of $p$ and $t$, such that
\begin{equation*}
\begin{split}
& \min(m_1^2,1) \|\Phi_{\Lambda_p}(u)-\Phi_{\Lambda_t}(u)\|_{H^1}^2\\
&\leq q \int_M |u_{\Lambda_p}^2-u_{\Lambda_t}^2||\Phi_{\Lambda_p}(u)-\Phi_{\Lambda_t}(u)| dv_g\\
&\leq C \|u_{\Lambda_p}-u_{ \Lambda_t}\|_{L^{2^\star}}\|u\|_{L^{2^\star}} \|\Phi_{\Lambda_p}(u)-\Phi_{ \Lambda_t}(u)\|_{L^{2^\star/(2^\star-2)}}\\
&\leq C \|u\|_{H^1} \|u_{\Lambda_p}-u_{ \Lambda_t}\|_{H^1} \|\Phi_{\Lambda_p}(u)-\Phi_{ \Lambda_t}(u)\|_{H^1}^{2^\star-2}~,
 \end{split}
 \end{equation*}
 where we have used that $2^\star/(2^\star-2) \ge 2^\star$ when $n \ge 6$. If we assume $n = 5$, then
\begin{eqnarray}
\nonumber  \|\Phi_{\Lambda_p}(u)-\Phi_{ \Lambda_t}(u)\|_{H^1}^2
&\leq & C \|u\|_{H^1} \|u_{\Lambda_p}-u_{ \Lambda_t}\|_{H^1} \|\Phi_{\Lambda_p}(u)-\Phi_{ \Lambda_t}(u)\|_{H^1},
\end{eqnarray} 
using H\"older inequality, the Sobolev embedding theorem and that $M$ has finite volume.
  In any case, we get that $(\Phi_{\Lambda_p}(u))_p$ is a Cauchy sequence in $H^1$. Hence, there exists $\Phi=\Phi(u) \in H^1$ such that 
 \begin{equation}\label{eq1}
 \Phi_{\Lambda_p}(u)\to \Phi\quad  \text{in} \quad  H^1\cap L^q
 \end{equation}
 as $p\to +\infty$ for all $q \ge 1$. Up to a subsequence, we may assume that $\Phi_{\Lambda_p}\to \Phi$ a.e. In particular, $0\leq \Phi\leq \frac{1}{q}$ and by \eqref{eq2} and \eqref{eq1}, we get that $\Phi(u)$ satisfies \eqref{eqphi} in the restricted weak sense. There holds that $\Phi(u)$ is unique in $H^1\cap L^\infty$. Testing the equation satisfied by $\Phi(u)-\Phi(v)$ in the restricted weak sense against $\Phi(u)-\Phi(v)\in H^1\cap L^\infty$, we get the same estimates as above, namely
 $$\|\Phi(u)-\Phi(v)\|_{H^1}^{1/\theta_n}\leq C (\|u\|_{H^1}+\|v\|_{H^1})\|u-v\|_{H^1}$$
 for all $u, v \in H^1$, where $\theta_n=1$ if $n=5$ and $\theta_n=1/(4-2^\star)$ if $n\geq6$. This proves Lemma \ref{l1}.
 \end{proof}
 
 When $n=5$, though the problem is not variational in $H^1$, we can prove that 
$\Phi(u)$ is actually a true weak solution in $H^1$ of \eqref{eqphi}. Independently, whatever the dimension is, multiplying by $\Phi(v)-\Phi(u)$ 
the equation satisfied by $\Phi(v)-\Phi(u)$, and since $0 \le \Phi \le \frac{1}{q}$, we get that 
\begin{equation}\label{EstPhiHolderType}
 \|\Phi(u)-\Phi(v)\|_{H^1}^2\leq C \|u+v\|_{L^2} \|v-u\|_{L^2}
 \end{equation}
 for all $u, v \in H^1$, where $C > 0$ is independent of $u$ and $v$.

 \section{Non smooth convergence of the gauge and loss of phase compensation in the critical case}\label{NSCLPC}
 
 We prove in this section that, in the model case of a bubble, the associated gauge potentials are not controlled and we loose the key equation from which phase 
 compensation was established in Druet and Hebey \cite{kgmp3} when $n = 3$, and Hebey and Truong \cite{kgmp4} when $n = 4$. Given a converging sequence 
 $(x_\alpha)_\alpha$ of points in $M$, and a sequence $(\mu_\alpha)_\alpha$ of positive real numbers such that $\mu_\alpha \to 0$ as $\alpha \to +\infty$, we define 
 the bubble of centers $x_\alpha$ and weights $\mu_\alpha$ as the sequence $(B_\alpha)_\alpha$ of functions given by
 \begin{equation}\label{bulle}
 B_\alpha(x) = \left(\frac{\mu_\alpha}{\mu_\alpha^2 + \frac{d_g(x_\alpha,x)^2}{n(n-2)}}\right)^{\frac{n-2}{2}}
 \end{equation}
 for all $\alpha$, and all $x \in M$, where $d_g$ is the distance associated to $g$. Bubbles are constructed from the Caffarelli, Gidas and Spruck \cite{CaffarelliGidas} classification of 
 nonnegative nontrivial solutions of the critical Euclidean equation $\Delta u = u^{2^\star-1}$. Any $C^2$ nonnegative nontrivial solution of this equation is indeed, 
 up to translations and scaling,  given by $B(x) = \left(1+\lambda_n\vert x\vert^2\right)^{-(n-2)/2}$ for all $x\in M$, where $\lambda_n = 1/n(n-2)$. 
Obviously, there holds that
 \begin{equation}\label{RescBubbleCv}
 \mu_\alpha^{\frac{n-2}{2}}B_\alpha\left(\exp_{x_\alpha}(\mu_\alpha x)\right) = B(x)
 \end{equation}
 for all $x \in \mathbb{R}^n$ and all $\alpha \gg 1$. The first result we prove in this section is the following.
 
\begin{lem}\label{controlv2}
Let $(M,g)$ be a closed Riemannian $n$-manifold, $n\geq 5$, $(u_\alpha)_\alpha$ and $(v_\alpha)_\alpha$ be sequences of smooth positive functions in $M$ such that 
$v_\alpha=\Phi(u_\alpha)$, where $\Phi$ is as in \eqref{eqphi}. Let $(\mu_\alpha)_\alpha$ be a sequence of positive real numbers converging to zero, 
and $(x_\alpha)_\alpha$ be a converging sequence of points in $M$. Assume 
\begin{equation} \label{eql612}
 \mu_\alpha^{\frac{n-2}{2}} u_\alpha\left(\exp_{x_\alpha}(\mu_\alpha x)\right) \to \tilde{u}_0(x)
\end{equation} 
in $C^0_{loc}(\mathbb{R}^n)$ as $\alpha\to +\infty$, where $\tilde{u}_0$ is a given positive $C^1$-function in $\mathbb{R}^n$. Then
  \begin{equation}\label{eqcontrolv2}
 \hat{v}_\alpha\to \frac{1}{q}~\text{in}~L^p_{loc}(\mathbb{R}^n)
 \end{equation}
 for all $p \in [1,+\infty)$, and a.e., as $\alpha \to +\infty$, where $\hat{v}_\alpha$ is the function given by $\hat{v}_\alpha(x)=v_\alpha(\exp_{x_\alpha}(\mu_\alpha x))$ 
 for $x \in \mathbb{R}^n$.
\end{lem}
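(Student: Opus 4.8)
The plan is to work with $w_\alpha:=\tfrac1q-v_\alpha$ instead of $v_\alpha$. By Lemma~\ref{l1} we have $0\le v_\alpha\le\tfrac1q$, hence $0\le w_\alpha\le\tfrac1q$; and substituting $v_\alpha=\tfrac1q-w_\alpha$ into \eqref{eqphi} shows that $w_\alpha$ solves, classically on $M$ (as $v_\alpha$ is smooth),
\begin{equation*}
\Delta_g w_\alpha+\big(m_1^2+q^2u_\alpha^2\big)w_\alpha=\frac{m_1^2}{q}.
\end{equation*}
Writing $\hat w_\alpha(x):=w_\alpha(\exp_{x_\alpha}(\mu_\alpha x))=\tfrac1q-\hat v_\alpha(x)$, the conclusion \eqref{eqcontrolv2} is equivalent to $\hat w_\alpha\to0$ in $L^p_{loc}(\mathbb R^n)$ for all $p\in[1,+\infty)$ and a.e.; since $0\le\hat w_\alpha\le\tfrac1q$, it suffices to prove $\hat w_\alpha\to0$ in $L^2_{loc}(\mathbb R^n)$ and then to upgrade to the a.e.\ statement.

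\emph{Step 1: a local energy estimate at the bubble scale.} Fix $R>0$ and a cutoff $\chi\in C^\infty([0,+\infty))$ with $\chi\equiv1$ on $[0,1]$ and $\chi\equiv0$ on $[2,+\infty)$, and let $\eta_\alpha=\chi\big(d_g(x_\alpha,\cdot)/(R\mu_\alpha)\big)$; then $\eta_\alpha\equiv1$ on $B_{x_\alpha}(R\mu_\alpha)$, $\eta_\alpha$ is supported in $B_{x_\alpha}(2R\mu_\alpha)$, and $|\nabla\eta_\alpha|\le C(R\mu_\alpha)^{-1}$. Testing the equation for $w_\alpha$ against $\eta_\alpha^2 w_\alpha$ and absorbing the gradient cross term by Cauchy--Schwarz, one gets
\begin{equation*}
q^2\int_M u_\alpha^2\,\eta_\alpha^2 w_\alpha^2\,dv_g\le C\int_M w_\alpha^2|\nabla\eta_\alpha|^2\,dv_g+\frac{m_1^2}{q}\int_M\eta_\alpha^2 w_\alpha\,dv_g\le C(R)\,\mu_\alpha^{n-2},
\end{equation*}
using $0\le w_\alpha\le\tfrac1q$ and $\mathrm{Vol}_g(B_{x_\alpha}(2R\mu_\alpha))\le C(R)\mu_\alpha^n$ for $\alpha\gg1$. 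On the other hand, \eqref{eql612} and the positivity and continuity of $\tilde u_0$ give $\tilde u_0\ge c_R>0$ on $\overline{B(0,R)}$, hence $u_\alpha^2\ge\tfrac14 c_R^2\,\mu_\alpha^{-(n-2)}$ on $B_{x_\alpha}(R\mu_\alpha)$ for $\alpha\gg1$; since $\eta_\alpha\equiv1$ there, the left-hand side above is $\ge\tfrac{q^2 c_R^2}{4}\mu_\alpha^{-(n-2)}\int_{B_{x_\alpha}(R\mu_\alpha)}w_\alpha^2\,dv_g$, whence
\begin{equation*}
\int_{B_{x_\alpha}(R\mu_\alpha)}w_\alpha^2\,dv_g\le C(R)\,\mu_\alpha^{2(n-2)}.
\end{equation*}

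\emph{Step 2: rescaling and conclusion.} The change of variables $y=\exp_{x_\alpha}(\mu_\alpha x)$ has Jacobian comparable to $\mu_\alpha^n$ uniformly on $B_{x_\alpha}(R\mu_\alpha)$ for $\alpha\gg1$, so
\begin{equation*}
\int_{B(0,R)}\hat w_\alpha^2\,dx\le C(R)\,\mu_\alpha^{-n}\int_{B_{x_\alpha}(R\mu_\alpha)}w_\alpha^2\,dv_g\le C(R)\,\mu_\alpha^{n-4}\longrightarrow0
\end{equation*}
as $\alpha\to+\infty$, because $n\ge5$. Hence $\hat w_\alpha\to0$ in $L^2_{loc}(\mathbb R^n)$, and therefore in $L^p_{loc}(\mathbb R^n)$ for every $p\in[1,+\infty)$ by interpolation with $0\le\hat w_\alpha\le\tfrac1q$, which is \eqref{eqcontrolv2} in the $L^p$ topology. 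For the a.e.\ convergence, note that $w_\alpha\ge0$ forces $\Delta_{\tilde g_\alpha}\hat w_\alpha\le\mu_\alpha^2 m_1^2/q$ in the weak sense on $B(0,2R)$, where $\tilde g_\alpha$ denotes the rescaled metric, which converges to the Euclidean one in $C^2_{loc}$ with uniformly bounded ellipticity constants; the De Giorgi--Nash--Moser local bound for nonnegative subsolutions then gives $\sup_{B(0,R)}\hat w_\alpha\le C(R)\big(\|\hat w_\alpha\|_{L^2(B(0,2R))}+\mu_\alpha^2\big)\to0$, so in fact $\hat w_\alpha\to0$ in $L^\infty_{loc}(\mathbb R^n)$, which in particular yields the a.e.\ convergence in \eqref{eqcontrolv2}.

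All of the above is routine bookkeeping with uniform constants --- the Riemannian volume comparison, the Jacobian of $\exp_{x_\alpha}$ near its origin, the uniform lower bound on $u_\alpha$ extracted from \eqref{eql612}, and the uniform ellipticity of the rescaled metrics. The one structural point, and the only place where $n\ge5$ is genuinely used, is that the exponent $n-4$ in the last estimate is positive: at the bubble scale $\mu_\alpha$, the zeroth-order coefficient $q^2u_\alpha^2\sim q^2\mu_\alpha^{-(n-2)}\tilde u_0^2$ of the equation for $w_\alpha$ blows up fast enough to overwhelm its bounded right-hand side $m_1^2/q$, forcing $w_\alpha=\tfrac1q-v_\alpha$ to zero on every fixed ball $B_{x_\alpha}(R\mu_\alpha)$.
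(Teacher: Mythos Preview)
Your proof is correct and takes a genuinely different, more direct route than the paper's. Both arguments begin identically by setting $w_\alpha=\tfrac1q-v_\alpha$ and observing that it solves $\Delta_g w_\alpha+(m_1^2+q^2u_\alpha^2)w_\alpha=m_1^2/q$, but then diverge.

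The paper proceeds by a comparison argument: on a rescaled ball $B_0(R)$ it replaces the variable coefficient $q^2\hat u_\alpha^2\mu_\alpha^2$ by the constant lower bound $C_R\mu_\alpha^{4-n}$ and uses the maximum principle to dominate $\hat w_\alpha$ by the solution $\bar w_\alpha$ of the resulting Dirichlet problem with boundary datum $1/q$. It then splits $\bar w_\alpha=\bar w_{1,\alpha}+\bar w_{2,\alpha}$ into a ``harmonic'' piece (zero right-hand side, boundary $1/q$) and a ``Poisson'' piece (right-hand side $m_1^2\mu_\alpha^2/q$, zero boundary), shows $\bar w_{2,\alpha}\to 0$ in $C^0$ via energy and elliptic estimates, and for $\bar w_{1,\alpha}$ passes to a subsequence along which $\mu_\alpha^{4-n}$ is increasing so that $\bar w_{1,\alpha}$ is monotone pointwise, then tests against a compactly supported $\varphi$ to conclude $\bar w_1=0$ a.e.

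Your argument bypasses all of this: a single cutoff energy test exploits the large coefficient $q^2u_\alpha^2\gtrsim c_R^2\mu_\alpha^{-(n-2)}$ directly to get the quantitative bound $\int_{B(0,R)}\hat w_\alpha^2\,dx\le C(R)\mu_\alpha^{n-4}$, and then De Giorgi--Nash--Moser upgrades $L^2_{loc}$ to $L^\infty_{loc}$. This buys you (i) an explicit rate of convergence, (ii) convergence of the \emph{full} sequence in $L^\infty_{loc}$ rather than a.e.\ along a subsequence, and (iii) a shorter argument that avoids the auxiliary Dirichlet decomposition. The paper's approach, on the other hand, is more self-contained (it uses only the classical maximum principle and dominated convergence, not the DNM machinery). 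Both identify the same mechanism: $n\ge5$ makes the rescaled zeroth-order coefficient blow up like $\mu_\alpha^{4-n}$, forcing $\hat w_\alpha\to 0$.
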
  

\begin{proof}[Proof of Lemma \ref{controlv2}] By Lemma \ref{l1}, $0\leq v_\alpha=\Phi(u_\alpha) \leq \frac{1}{q}$ and we only need to prove the almost everywhere convergence in \eqref{eqcontrolv2}. 
Let $w_\alpha$ be given by $w_\alpha=\frac{1}{q}-v_\alpha$. As for the $v_\alpha$'s, we have $0\leq w_\alpha\leq \frac{1}{q}$ over $M$ for all $\alpha$. By \eqref{eqphi}, $w_\alpha$ satisfies that
 $$\Delta_g w_\alpha+(m_1^2+q^2 u_\alpha^2)w_\alpha=\frac{m_1^2}{q}. $$
Let $\delta > 0$ be small. We set for $x\in B_0(\delta/\mu_\alpha)\subset \mathbb{R}^n$,
\begin{equation}\label{defhat}
\begin{split}
&\hat{w}_\alpha(x)=w_\alpha\left(\exp_{x_\alpha}(\mu_\alpha x)\right),\\
&\hat{u}_\alpha(x)=u_\alpha\left(\exp_{x_\alpha}(\mu_\alpha x)\right),\\
&\hat{g}_\alpha(x)=\left(\exp^\star_{x_\alpha}g\right)(\mu_\alpha x),
\end{split}
\end{equation}
so that  $\hat{g}_\alpha\to \xi$ ($\xi$ the Euclidean metric in $\mathbb{R}^n$) in $C^2_{loc}(\mathbb{R}^n)$, as $\alpha\to +\infty$ and 
\begin{equation}\label{eq4}
\Delta_{\hat{g}_\alpha} \hat{w}_\alpha+(m_1^2+q^2 \hat{u}_\alpha^2) \mu_\alpha^2 \hat{w}_\alpha=\frac{m_1^2}{q}\mu_\alpha^2
\end{equation}
in $B_0(\delta/\mu_\alpha)$. Let now $R>0$ be given. By the maximum principle, using  $\hat{w}_\alpha\leq \frac{1}{q}$, \eqref{eql612}, and \eqref{eq4}, we get that for $\alpha$ large enough
  \begin{equation}\label{eq3}
 0\leq\hat{w}_\alpha\leq \bar{w}_\alpha,
 \end{equation}
  where $\bar{w}_\alpha$ is the solution of 
 \begin{equation*}
 \left\{
\begin{array}{lll}
\Delta_{\hat{g}_\alpha}\bar{w}_\alpha+\frac{C_R}{\mu_\alpha^{n-4}}\bar{w}_\alpha & =\frac{m_1^2}{q}\mu_\alpha^2 &\text{ in }B_0(R),\\
\bar{w}_\alpha & =\frac{1}{q} &\text{ on } \partial B_0(R),
\end{array}
\right.
\end{equation*}
 where we can choose
 $$C_R=\frac{q^2}{2}\inf_{B_0(R)}\tilde{u}_0^2>0~.$$
 Since $R$ is any positive constant, it follows from \eqref{eq3} that Lemma \ref{controlv2} is proved if we get that
  \begin{equation}\label{eq5}
 \bar{w}_\alpha \to 0~\text{a.e. in}~B_0(R/2)
 \end{equation}
as $\alpha \to +\infty$. To get this result, we decompose the $\bar{w}_\alpha$'s in a quasi-harmonic part with nonzero Dirichlet boundary condition, and a quasi-Poisson part with zero  Dirichlet boundary condition. More precisely, we write $\bar{w}_\alpha=\bar{w}_{1,\alpha}+\bar{w}_{2,\alpha}$, where
  \begin{equation}\label{eq6}
 \left\{
\begin{array}{lll}
\Delta_{\hat{g}_\alpha}\bar{w}_{1,\alpha}+\frac{C_R}{\mu_\alpha^{n-4}}\bar{w}_{1,\alpha} & =0 &\text{ in }B_0(R)\\
\bar{w}_{1,\alpha} & =\frac{1}{q} &\text{ on } \partial B_0(R),
\end{array}
\right.
\end{equation}
and
\begin{equation}\label{eq7}
 \left\{
\begin{array}{lll}
\Delta_{\hat{g}_\alpha}\bar{w}_{2,\alpha}+\frac{C_R}{\mu_\alpha^{n-4}}\bar{w}_{2,\alpha} & =\frac{m_1^2}{q}\mu_\alpha^2 &\text{ in }B_0(R)\\
\bar{w}_{2,\alpha} & =0 &\text{ on } \partial B_0(R).
\end{array}
\right.
\end{equation}
Multiplying \eqref{eq7} by $\bar{w}_{2,\alpha}$, and integrating over $B_0(R)$, we get that $\bar{w}_{2,\alpha}\to0$ in $H^1(B_0(R))$. Then, by elliptic theory, as developed in Gilbarg and 
Tr\"udinger \cite{Gilbarg}, we get that
 \begin{equation}\label{eq8}
  \bar{w}_{2,\alpha} \to 0 \text{ in } C^0\left(B_0\left(R/2\right)\right).
 \end{equation}
By the maximum principle, we also have that
\begin{equation}\label{eq10}
0\leq \bar{w}_{1,\alpha}\leq \frac{1}{q}~\text{in}~B_0(R)
\end{equation} 
for all $\alpha$.  Since $n\geq5$, $\mu_\alpha^{4-n}\to +\infty$ as $\alpha\to +\infty$, and, up to a subsequence, we can assume that the sequence $(\mu_\alpha^{4-n})_\alpha$ is increasing. 
Then, by the maximum principle and \eqref{eq6}, for any $x\in B_0(R)$, the sequence $(\bar{w}_{1,\alpha}(x))_\alpha$ decreases. In particular, it converges 
to a limit $\bar{w}_1(x)$, 
with $0\leq \bar{w}_1(x)\leq 1/q$. Moreover, we get from \eqref{eq6} that if $\varphi$ a smooth function with compact support in $B_0(R)$, there holds :
\begin{equation*}
\int_{B_0(R)}\left(\Delta_{\hat{g}_\alpha}\varphi +\frac{C_R}{\mu_\alpha^{n-4}}\varphi\right)\bar{w}_{1,\alpha}dv_{\hat{g}_\alpha}=0
\end{equation*}
and then, using the dominated convergence theorem for $\alpha\to +\infty$ and \eqref{eq10},
\begin{equation*}
\mu_\alpha^{4-n}\int_{B_0(R)}\varphi \bar{w}_{1,\alpha} dv_{\hat{g}_\alpha}=\mu_\alpha^{4-n}\left(\int_{B_0(R)} \varphi \bar{w}_1 dv_\xi+o(1)\right)=O(1).
\end{equation*}
 As a conclusion, since $n\geq 5$ and $\mu_\alpha\to 0$ as $\alpha\to+\infty$, we get 
 that $\bar{w}_1=0$ a.e. in $B_0\left(\frac{R}{2} \right)$ and then, we get \eqref{eq5}  using \eqref{eq8}. This ends the proof of Lemma \ref{controlv2}.
 \end{proof}
 
 As shown in Druet and Hebey \cite{kgmp3} and Hebey and Truong \cite{kgmp4}, phase compensation holds true when $n = 3, 4$. Let $\Phi$ be as in \eqref{eqphi}, and 
 $(B_\alpha)_\alpha$ be as in \eqref{bulle}. Since $0 \le \Phi \le \frac{1}{q}$, there holds that
 \begin{equation}\label{PhaseCompensatEqtPrel}
 0 \le \frac{\int_M\Phi(B_\alpha)B_\alpha^2dv_g}{\int_MB_\alpha^2dv_g} \le \frac{1}{q}
 \end{equation}
 for all $\alpha$. Phase compensation when $n = 3, 4$ expresses the fact that in these dimensions, the ratio in \eqref{PhaseCompensatEqtPrel} converges to zero as 
 $\alpha \to +\infty$. We prove in what follows that the limit of the ratio in \eqref{PhaseCompensatEqtPrel} 
 jumps from $0$ to $\frac{1}{q}$ when $n \ge 5$ and thus that, in this sense, we lose phase compensation.
 
 \begin{cor}[Loss of phase compensation]\label{LossPhaseComp} Let $(M,g)$ be a closed Riemannian $n$-manifold, $n \ge 5$. Then, 
 contrary to the $3$ and $4$-dimensional cases, 
 \begin{equation}\label{LossPhaseCompEqt}
\lim_{\alpha\to+\infty}\frac{\int_M\Phi(B_\alpha)B_\alpha^2dv_g}{\int_MB_\alpha^2dv_g} = \frac{1}{q}~,
 \end{equation}
where $(B_\alpha)_\alpha$ is as in \eqref{bulle}, 
and $\Phi$ is as in \eqref{eqphi}.
 \end{cor}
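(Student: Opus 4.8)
The plan is to reduce the claim to a localization argument: the $L^2$-mass of the bubble $B_\alpha$ concentrates at the scale $\mu_\alpha$ around $x_\alpha$, so the ratio in \eqref{LossPhaseCompEqt} is governed by the behavior of $\Phi(B_\alpha)$ on balls $B_{x_\alpha}(R\mu_\alpha)$, and there Lemma \ref{controlv2} applies. First I would record that $(B_\alpha)_\alpha$ fits the hypotheses of Lemma \ref{controlv2}: taking $u_\alpha = B_\alpha$ and $v_\alpha = \Phi(B_\alpha)$, the rescaling \eqref{RescBubbleCv} shows that $\mu_\alpha^{(n-2)/2} B_\alpha(\exp_{x_\alpha}(\mu_\alpha x)) = B(x)$ converges (trivially, it is constant in $\alpha$ for $\alpha \gg 1$) in $C^0_{loc}(\mathbb R^n)$ to $\tilde u_0 = B$, which is a positive $C^1$ function on $\mathbb R^n$. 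Hence Lemma \ref{controlv2} gives $\hat v_\alpha \to \frac1q$ in $L^p_{loc}(\mathbb R^n)$ and a.e., where $\hat v_\alpha(x) = \Phi(B_\alpha)(\exp_{x_\alpha}(\mu_\alpha x))$.

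Next I would compute the two integrals after the change of variables $x = \exp_{x_\alpha}(\mu_\alpha y)$. Writing $\hat B_\alpha(y) = \mu_\alpha^{(n-2)/2} B_\alpha(\exp_{x_\alpha}(\mu_\alpha y)) = B(y)$ for $|y| \le \delta/\mu_\alpha$ and using that $dv_g$ pulls back to $(1+o(1))\,dy$ on compact sets, one gets
\begin{equation*}
\int_M B_\alpha^2\, dv_g = \mu_\alpha^2 \int_{B_0(\delta/\mu_\alpha)} \hat B_\alpha^2\, dv_{\hat g_\alpha} + O(1), \qquad \int_M \Phi(B_\alpha) B_\alpha^2\, dv_g = \mu_\alpha^2 \int_{B_0(\delta/\mu_\alpha)} \hat v_\alpha \hat B_\alpha^2\, dv_{\hat g_\alpha} + O\!\left(\tfrac{1}{q}\right),
\end{equation*}
the error terms coming from the region $d_g(x_\alpha,\cdot) \ge \delta$ where $B_\alpha^2 = O(\mu_\alpha^{n-2})$ uniformly, so that those contributions are $O(\mu_\alpha^{n-2}) = o(\mu_\alpha^2)$ since $n \ge 5$. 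Since $n \ge 5$, the model integral $\int_{\mathbb R^n} B^2 < +\infty$ when $n \ge 5$; more precisely for $n \ge 5$ we have $\int_{B_0(\delta/\mu_\alpha)} B^2\, dy \to \int_{\mathbb R^n} B^2\, dy \in (0,+\infty)$, so both numerator and denominator are $(C_n + o(1))\mu_\alpha^2$ up to the harmless $O(1)$ terms, and $\mu_\alpha^2 \to 0$ forces us to track the ratio rather than each piece — but the $O(1)$ errors are negligible against $\mu_\alpha^2$ precisely because... wait, here one must be slightly careful: divide numerator and denominator by $\mu_\alpha^2$, so the denominator tends to $\int_{\mathbb R^n}B^2\,dy \ne 0$ and the $O(1)/\mu_\alpha^2$ error blows up. The fix is that the tail integral is in fact $O(\mu_\alpha^{n-2})$, hence $O(\mu_\alpha^{n-2})/\mu_\alpha^2 = O(\mu_\alpha^{n-4}) \to 0$ for $n \ge 5$; similarly the numerator tail is bounded by $\frac1q \cdot O(\mu_\alpha^{n-2})$. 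So after dividing by $\mu_\alpha^2$ the ratio becomes
\begin{equation*}
\frac{\int_M \Phi(B_\alpha) B_\alpha^2\, dv_g}{\int_M B_\alpha^2\, dv_g} = \frac{\int_{B_0(\delta/\mu_\alpha)} \hat v_\alpha \hat B_\alpha^2\, dv_{\hat g_\alpha} + o(1)}{\int_{B_0(\delta/\mu_\alpha)} \hat B_\alpha^2\, dv_{\hat g_\alpha} + o(1)}.
\end{equation*}

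Finally I would pass to the limit in this last expression. The denominator converges to $\int_{\mathbb R^n} B^2 \in (0,+\infty)$ by dominated convergence (using $\hat g_\alpha \to \xi$ in $C^2_{loc}$ and $\hat B_\alpha = B$). For the numerator, I would split $B_0(\delta/\mu_\alpha) = B_0(R) \cup (B_0(\delta/\mu_\alpha)\setminus B_0(R))$: on the outer annulus the integrand is bounded by $\frac1q B^2$ which is integrable on $\mathbb R^n$ (again $n \ge 5$), so its contribution is uniformly small for $R$ large; on $B_0(R)$ we use $0 \le \hat v_\alpha \le \frac1q$, the a.e. convergence $\hat v_\alpha \to \frac1q$ from Lemma \ref{controlv2}, and dominated convergence to get $\int_{B_0(R)} \hat v_\alpha \hat B_\alpha^2\, dv_{\hat g_\alpha} \to \frac1q \int_{B_0(R)} B^2\, dy$. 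Letting first $\alpha \to +\infty$ and then $R \to +\infty$ yields that the numerator converges to $\frac1q \int_{\mathbb R^n} B^2$, whence the ratio tends to $\frac1q$, which is \eqref{LossPhaseCompEqt}. The main obstacle, and the only place anything nontrivial happens, is the a.e. convergence $\hat v_\alpha \to \frac1q$, but that is exactly the content of Lemma \ref{controlv2}; the rest is bookkeeping with the concentration scales, the point being that for $n \ge 5$ the $L^2$-norm of the Euclidean bubble is finite, so all the mass genuinely lives at scale $\mu_\alpha$ and the tails are negligible — this is precisely what fails when $n = 3,4$ (where $\int B^2 = +\infty$ or the bulk of the mass is spread out), explaining the jump.
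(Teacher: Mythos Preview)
Your argument is correct and matches the paper's proof essentially line for line: both invoke Lemma~\ref{controlv2} via \eqref{RescBubbleCv}, estimate the tail outside $B_{x_\alpha}(\delta)$ by $O(\mu_\alpha^{n-2}) = o(\mu_\alpha^2)$, rescale, and pass to the limit by dominated convergence using $B^2 \in L^1(\mathbb{R}^n)$ for $n\ge 5$. The only cosmetic difference is that the paper applies dominated convergence directly on $B_0(\delta/\mu_\alpha)$ (with dominating function $\tfrac{1}{q}B^2$), whereas you split into $B_0(R)$ and an annulus before sending $R\to\infty$; also, you should clean up the mid-proof self-correction about the $O(1)$ versus $O(\mu_\alpha^{n-2})$ tail before a final write-up.
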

 
 \begin{proof}[Proof of Corollary \ref{LossPhaseComp}] By \eqref{RescBubbleCv} and Lemma \ref{controlv2}, if we 
 let $v_\alpha = \Phi(B_\alpha)$, and define $\hat{v}_\alpha$ to be the function given by 
$\hat{v}_\alpha(x)=v_\alpha(\exp_{x_\alpha}(\mu_\alpha x))$ for $x \in \mathbb{R}^n$, then $\hat v_\alpha \to \frac{1}{q}$ in $L^p_{loc}(\mathbb{R}^n)$ and 
a.e. as $\alpha \to +\infty$. In particular, we can write by the dominated convergence theorem that for $\delta > 0$ sufficiently small,
  \begin{equation}\label{PhaseCompProofEqt1}
  \begin{split}
  \int_M v_\alpha B_\alpha^2 dv_g 
  &= \int_{B_{x_\alpha}(\delta)} v_\alpha B_\alpha^2 dv_g + O(\mu_\alpha^{n-2}),\\
  &= \mu_\alpha^2 \int_{B_0(\delta/\mu_\alpha)} \hat{v}_\alpha B^2 dv_{\hat{g}_\alpha}+o(\mu_\alpha^2),\\
  &= \frac{\mu_\alpha^2}{q} \int_{B_0(\delta/\mu_\alpha)} B^2 dv_{\hat{g}_\alpha} +o(\mu_\alpha^2),
  \end{split}
  \end{equation}
 where $B$ is as in \eqref{RescBubbleCv}, and $\hat{g}_\alpha$ is as in \eqref{defhat}. Independently,
  \begin{equation}\label{PhaseCompProofEqt2}
  \begin{split}
  \int_M B_\alpha^2 dv_g 
&= \int_{B_{x_\alpha}(\delta)} B_\alpha^2 dv_g + O(\mu_\alpha^{n-2}),\\
&= \mu_\alpha^2\int_{B_0(\delta/\mu_\alpha)} B^2 dv_{\hat{g}_\alpha} +o(\mu_\alpha^2),
  \end{split}
  \end{equation}
 and we get \eqref{LossPhaseCompEqt} by combining \eqref{PhaseCompProofEqt1} and \eqref{PhaseCompProofEqt2}. This ends the 
 proof of Corollary \ref{LossPhaseComp}.
 \end{proof}
 
 We prove now that in the critical case of \eqref{kg}, when $n \ge 5$, we cannot hope for a $C^1$, and even a $C^0$, convergence of the potentials in the leading equation of 
 \eqref{kg} when dealing with blowing-up sequences of solutions $(u_\alpha,v_\alpha)$ of such systems. 
 More precisely, we let $u_\alpha$ and $v_\alpha$ be smooth positive functions such that 
 \begin{equation}\label{kgperturb}
\begin{cases}
\Delta_g u_\alpha+m_0^2 u_\alpha=u_\alpha^{2^\star-1}+\omega_\alpha^2(1-qv_\alpha)^2 u_\alpha\\
\Delta_g v_\alpha +(m_1^2+q^2u_\alpha^2) v_\alpha = q u_\alpha^2
\end{cases}
\end{equation}
 for all $\alpha$, where $(\omega_\alpha)_\alpha$ is a converging sequence in $(-m_0,+m_0)$. We assume that $(u_\alpha)_\alpha$ is bounded in 
 $H^1$. When $n = 3$, elliptic theory gives that $(v_\alpha)_\alpha$ is bounded in $C^{0,\theta}$ for some $\theta \in (0,1)$, and thus, up to a 
 subsequence, the $v_\alpha$'s converge in $C^0$ as $\alpha \to +\infty$. We claim that the convergence stops to hold true when $n \ge 5$.
 
\begin{cor}[Non $C^0$-convergence of the $v_\alpha$'s]\label{NonC0Conv} Let $(M,g)$ be a closed Riemannian $n$-manifold, $n \ge 5$, 
$(\omega_\alpha)_\alpha$ be a converging sequence in $(-m_0,+m_0)$, and $u_\alpha, v_\alpha > 0$ be smooth positive functions satsifying 
\eqref{kgperturb} for all $\alpha$. Assume that $(u_\alpha)_\alpha$ is bounded in $H^1$, and that $\Vert u_\alpha\Vert_{L^\infty} \to +\infty$ as $\alpha \to +\infty$. 
Then there are no subsequences of $(v_\alpha)_\alpha$ which 
converge in $C^0$.
\end{cor}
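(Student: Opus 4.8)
The plan is to argue by contradiction: suppose that some subsequence of $(v_\alpha)_\alpha$ converges in $C^0(M)$ to a limit $v_\infty$. I would first extract the blow-up geometry of $(u_\alpha)_\alpha$ in the standard way. Since $(u_\alpha)_\alpha$ is bounded in $H^1$ and $\Vert u_\alpha\Vert_{L^\infty}\to+\infty$, a standard concentration argument (in the spirit of the blow-up analysis for the critical equation $\Delta_g u + hu = u^{2^\star-1}$, adapted to the extra term $\omega_\alpha^2(1-qv_\alpha)^2 u_\alpha$, which is bounded and hence harmless) produces points $x_\alpha\to x_\infty$ and scales $\mu_\alpha\to 0$ such that, setting $\hat u_\alpha(x)=\mu_\alpha^{(n-2)/2}u_\alpha(\exp_{x_\alpha}(\mu_\alpha x))$, one has $\hat u_\alpha\to B$ in $C^0_{loc}(\mathbb{R}^n)$, where $B$ is the standard bubble of \eqref{RescBubbleCv}. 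Concretely, one chooses $x_\alpha$ to be a maximum point of $u_\alpha$ and $\mu_\alpha=\Vert u_\alpha\Vert_{L^\infty}^{-2/(n-2)}$; rescaling the first equation in \eqref{kgperturb} and using that the coefficients converge and the nonlinearity is critical, elliptic theory yields local $C^0$ (indeed $C^1$) convergence to a nonnegative nontrivial solution of $\Delta u=u^{2^\star-1}$ on $\mathbb{R}^n$, which by Caffarelli--Gidas--Spruck is $B$ up to the normalization already fixed.

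Once this is in place, Lemma \ref{controlv2} applies directly with $\tilde u_0=B$ (a positive $C^1$ function on $\mathbb{R}^n$), giving that $\hat v_\alpha\to\frac1q$ in $L^p_{loc}(\mathbb{R}^n)$ and a.e., where $\hat v_\alpha(x)=v_\alpha(\exp_{x_\alpha}(\mu_\alpha x))$. In particular $\hat v_\alpha(0)\to\frac1q$, i.e. $v_\alpha(x_\alpha)\to\frac1q$. On the other hand, if $v_\alpha\to v_\infty$ in $C^0(M)$ and $x_\alpha\to x_\infty$, then $v_\alpha(x_\alpha)\to v_\infty(x_\infty)$, so necessarily $v_\infty(x_\infty)=\frac1q$. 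I would then exploit the a.e.\ convergence $\hat v_\alpha\to\frac1q$ on a full-measure set of $\mathbb{R}^n$ together with uniform continuity of $v_\infty$: pick any $x\in\mathbb{R}^n$ with $\hat v_\alpha(x)\to\frac1q$; then $v_\alpha(\exp_{x_\alpha}(\mu_\alpha x))\to\frac1q$, while $C^0$-convergence and $\exp_{x_\alpha}(\mu_\alpha x)\to x_\infty$ force this same quantity to converge to $v_\infty(x_\infty)=\frac1q$ — consistent so far. To get a contradiction I would instead test the $v_\alpha$-equation against a fixed bump function, or integrate, to see that $v_\infty$ must also solve the limit equation away from the concentration point and be bounded away from $\frac1q$ at regular points, contradicting that a $C^0$ limit cannot have a removable "spike" of height $\frac1q$ sitting on an otherwise lower profile.

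More precisely, the cleanest contradiction comes from comparing two scales. Fix $\delta>0$ small. By the a.e.\ convergence in Lemma \ref{controlv2} and dominated convergence (as in \eqref{PhaseCompProofEqt1}), $\mu_\alpha^{-n}\int_{B_{x_\alpha}(\mu_\alpha R)}(\tfrac1q-v_\alpha)\,dv_g\to 0$ for every fixed $R>0$; thus the average of $\tfrac1q-v_\alpha$ over shrinking balls $B_{x_\alpha}(\mu_\alpha R)$ tends to $0$. If $v_\alpha\to v_\infty$ in $C^0$, this average also tends to $\tfrac1q-v_\infty(x_\infty)$, forcing $v_\infty(x_\infty)=\tfrac1q$; so $v_\infty\equiv\tfrac1q$ on a neighborhood would be needed for compatibility, but plugging $v\equiv\frac1q$ into the second equation of \eqref{kgperturb} gives $\tfrac{m_1^2}{q}=0$ somewhere in the limit, which is absurd since $m_1>0$. (Equivalently: pass to the limit in the weak form of $\Delta_g v_\alpha+(m_1^2+q^2u_\alpha^2)v_\alpha=qu_\alpha^2$ tested against a fixed $\varphi\in C^\infty(M)$ supported near $x_\infty$; the $C^0$ limit $v_\infty$ together with the fact that $u_\alpha^2\to+\infty$ on the concentration set yields an inconsistent identity unless $v_\infty(x_\infty)<\frac1q$, contradicting the previous paragraph.) I expect the main obstacle to be the first step: carefully justifying the bubble extraction for \eqref{kgperturb}, in particular checking that the zeroth-order perturbation $\omega_\alpha^2(1-qv_\alpha)^2u_\alpha$ — which a priori is only known to be $O(u_\alpha)$ pointwise — does not interfere with the rescaled limit, and that $\mu_\alpha\to0$ genuinely holds (which uses $\Vert u_\alpha\Vert_{L^\infty}\to+\infty$). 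Everything after that is a direct application of Lemma \ref{controlv2} plus the elementary observation that a $C^0$ limit cannot reconcile the bubble-scale value $\frac1q$ with the equation satisfied by the macroscopic limit $v_\infty$.
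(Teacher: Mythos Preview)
Your overall strategy matches the paper's: assume $v_\alpha\to v_\infty$ in $C^0$, extract a bubble in $u_\alpha$, apply Lemma~\ref{controlv2} to force $v_\infty=\tfrac{1}{q}$ at the concentration point, and then contradict this via the macroscopic equation satisfied by $v_\infty$. Your bubble extraction at the maximum point is in fact simpler than the paper's route through the Struwe decomposition and works perfectly well here (the rescaled potential is $O(\mu_\alpha^2)$ since $0\le v_\alpha\le\tfrac1q$), and your averaging argument yielding $v_\infty(x_\infty)=\tfrac{1}{q}$ is correct.

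The gap is in the final step. The claim that ``$v_\infty\equiv\tfrac1q$ on a neighborhood would be needed for compatibility'' is unsupported: you have only shown $v_\infty(x_\infty)=\tfrac1q$ at a single point, and nothing you wrote forces it on a neighborhood. Your alternative, passing to the weak limit and invoking ``$u_\alpha^2\to+\infty$ on the concentration set,'' misidentifies the mechanism: the blow-up of $u_\alpha^2$ is \emph{not} what produces the contradiction. What is actually needed is the opposite observation: since $(u_\alpha)_\alpha$ is bounded in $H^1$, compact Sobolev embedding gives $u_\alpha\to u_\infty$ strongly in $L^{2+\varepsilon}$ for some $\varepsilon>0$, hence $u_\alpha^2\to u_\infty^2$ in $L^{1+\varepsilon'}$. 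Combined with the $C^0$-convergence of $v_\alpha$ (and the $H^1$-bound on $v_\alpha$ coming from its equation), this lets you pass to the limit in the second equation of \eqref{kgperturb} and conclude that $v_\infty$ is a classical solution of
\[
\Delta_g v_\infty+(m_1^2+q^2u_\infty^2)\,v_\infty=qu_\infty^2~.
\]
Evaluating at a maximum point of $v_\infty$ then gives $v_\infty\le qu_\infty^2/(m_1^2+q^2u_\infty^2)<\tfrac1q$ everywhere, contradicting $v_\infty(x_\infty)=\tfrac1q$. This is precisely how the paper closes the argument, and it is what your parenthetical sketch should be replaced by.
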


 \begin{proof}[Proof of Corollary \ref{NonC0Conv}] We assume by contradiction that, up to a passing to a subsequence, 
 $v_\alpha \to v$ in $C^0$. The sequence $(u_\alpha)_\alpha$ is bounded in $H^1$ 
 and, by \eqref{kgperturb}, it satisfies an equation like
 $$\Delta_gu_\alpha + h_\alpha u_\alpha = u_\alpha^{2^\star-1}~,$$
 where $(h_\alpha)_\alpha$ converges in $L^\infty$. In particular, the $H^1$-theory of Struwe \cite{Struwe} applies, see also Hebey \cite{HebeyBookZurich} for an exposition in book form 
 in this particular context, and we get that
 \begin{equation}\label{DecompH11stLem}
 u_\alpha = u_\infty + \sum_{i=1}^kB_\alpha^i + R_\alpha
 \end{equation}
 for all $\alpha$, where $u_\infty \in H^1$ is a weak solution of $\Delta_gu + h u = u^{2^\star-1}$, 
$h = \lim h_\alpha$, $k \in \mathbb{N}$, the $(B_\alpha^i)_\alpha$'s are bubbles as in \eqref{bulle}, and $R_\alpha \to 0$ in $H^1$ 
as $\alpha \to +\infty$. By the Tr\"udinger \cite{Trudinger} regularity theory, $u_\infty \in H_2^p$ for all $p \ge 1$. By the second equation in 
\eqref{kgperturb}, the sequence $(v_\alpha)_\alpha$ is bounded in $H^1$. Up to passing to a subsequence, we may 
assume that $u_\alpha \to u_\infty$ in $L^{2+\varepsilon}$ for $\varepsilon > 0$ sufficiently small, and that $v_\alpha \rightharpoonup v$ in $H^1$. Then we get that 
$v$ solves
$$\Delta_gv + (m_1^2+q^2u_\infty^2) v = q u_\infty^2$$
and by regularity theory, $v$ is $C^3$. If $x_0$ is a point where $v$ is maximum, $\Delta_gv(x_0) \ge 0$, and we thus get that 
\begin{equation}\label{EstimvLim1stLem}
v \le \frac{qu_\infty(x_0)^2}{m_1^2+q^2u_\infty(x_0)^2}~.
\end{equation}
The assumption $\|u_\alpha\|_{L^\infty}\to +\infty$ as $\alpha\to +\infty$ and an adaptation of the Tr\"udinger argument \cite{Trudinger} imply that $k\in\mathbb{N}^\star$ in \eqref{DecompH11stLem}. Let $\mu_\alpha = \min_i\mu_{i,\alpha}$, where the $\mu_{i,\alpha}$'s are the weights of the bubbles in \eqref{DecompH11stLem}. Up to renumbering, 
and up to passing to a subsequence, we can assume that $\mu_\alpha = \mu_{1,\alpha}$ for all $\alpha$. We let the $x_\alpha$'s be the centers of the bubble $(B_\alpha^1)_\alpha$. 
By rescaling arguments, proceeding as in Proposition 7.1 in Hebey \cite{HebeyBookZurich}, 
\begin{equation} \label{eql612Bis1stLemma}
 \mu_\alpha^{\frac{n-2}{2}} u_\alpha(\exp_{x_\alpha}(\mu_\alpha x)) \to B(x)
\end{equation} 
in $C^1_{loc}(\mathbb{R}^n)$ as $\alpha\to +\infty$, where $B$ is as in \eqref{RescBubbleCv}.
By \eqref{eql612Bis1stLemma} we can apply Lemma \ref{controlv2}. In particular, we get that if $x_1$ is the limit of the $x_\alpha$'s, then 
$v(x_1) = \frac{1}{q}$, a contradiction with \eqref{EstimvLim1stLem} which implies that $v < \frac{1}{q}$ everywhere in $M$. This ends the proof of 
Corollary \ref{NonC0Conv}. 
\end{proof}

  A typical example where Corollary \ref{NonC0Conv} applies is when $(M,g) = (S^n,g)$ is the unit $n$-sphere, $\omega_\alpha = 0$ for all $\alpha$, 
  $v_\alpha = \Phi(u_\alpha)$, where $\Phi$ is as in \eqref{eqphi}, $m_0^2 = \frac{n(n-2)}{4}$, and 
   \begin{equation}\label{bullesphere}
 u_\alpha(x)= \left(\frac{n(n-2)}{4}(\beta_\alpha^2-1) \right)^{\frac{n-2}{4}}(\beta_\alpha-\cos(d_g(x_0,x)))^{-\frac{n-2}{2}}
 \end{equation}
for all $\alpha$, some $x_0 \in S^n$, and $\beta_\alpha$'s such that $\beta_\alpha > 1$ for all $\alpha$, and $\beta_\alpha \to 1$ as $\alpha \to +\infty$. 
 
 \section{Differentiablity of the auxiliary energy}\label{DiffAuxilEnergy}
 
 We return in this section to the map $\Phi$ we constructed in {\eqref{eqphi} and prove that despite the fact that $\Phi$ is not a priori 
 differentiable, the map $\Psi: H^1 \to \mathbb{R}$ given by
 \begin{equation}\label{defpsi}
 \Psi(u) = \frac{1}{2}\int_M\left(1-q\Phi(u)\right)u^2dv_g
 \end{equation} 
 is $C^1$ with a nice differential given by the jumping of the square power on $u$ to a square power on $1-q\Phi(u)$. More 
 precisely, we prove that the following lemma holds true.
 
\begin{lem}\label{l2} Let $(M,g)$ be a closed Riemannian $n$-manifold, $n\geq 5$. Let also $q$, $m_1>0$, 
$\Phi : H^1\to H^1$ be as in \eqref{eqphi}, and $\Psi : H^1 \to \mathbb{R}$ be defined by \eqref{defpsi}. Then $\Psi$ is $C^1$ in $H^1$ and 
\begin{equation} \label{eql2}
D\Psi(u).(\varphi)=\int_M (1-q\Phi(u))^2 u\varphi dv_g
\end{equation} 
for all $u, \varphi\in H^1$.
\end{lem}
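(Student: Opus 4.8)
The plan is to show first that $\Psi$ is Gateaux differentiable with the claimed differential, and then to upgrade this to $C^1$ by showing that $u \mapsto D\Psi(u)$ is continuous from $H^1$ into $(H^1)^\star$; by a standard result (e.g. the criterion that Gateaux differentiability plus continuity of the Gateaux differential implies Fréchet differentiability and $C^1$ regularity), this suffices.

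For the Gateaux computation, fix $u, \varphi \in H^1$ and consider $t \mapsto \Psi(u+t\varphi)$ for small real $t$. Writing $\Phi_t = \Phi(u+t\varphi)$ and $\Phi_0 = \Phi(u)$, I would use the Hölder-type bound \eqref{EstPhiHolderType}, namely $\|\Phi(u+t\varphi) - \Phi(u)\|_{H^1}^2 \le C\|2u + t\varphi\|_{L^2}\|t\varphi\|_{L^2}$, to get $\|\Phi_t - \Phi_0\|_{H^1} = O(|t|^{1/2})$, hence $\Phi_t \to \Phi_0$ in $H^1$ and a.e. (along subsequences) as $t \to 0$. Expanding
\[
\Psi(u+t\varphi) - \Psi(u) = \frac12\int_M\left[(1-q\Phi_t)(u+t\varphi)^2 - (1-q\Phi_0)u^2\right]dv_g,
\]
I split the integrand as $(1-q\Phi_0)\big((u+t\varphi)^2 - u^2\big) - q(\Phi_t - \Phi_0)(u+t\varphi)^2$. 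The first term contributes $t\int_M (1-q\Phi_0)u\varphi\,dv_g + O(t^2)$ after using $0 \le \Phi_0 \le 1/q$ and $u, \varphi \in H^1 \hookrightarrow L^{2^\star}$. The second term is where the key algebraic identity enters: I would test the restricted-weak-sense equations \eqref{eqphi125} for $\Phi_t$ and for $\Phi_0$ against the admissible function $\Phi_t - \Phi_0 \in H^1 \cap L^\infty$, and also test them against other combinations, to re-express $\int_M q(\Phi_t - \Phi_0)(u+t\varphi)^2\,dv_g$ in terms of the bilinear form. Concretely, subtracting the two equations and testing against a suitable $\varphi$ should let me trade $\int (\Phi_t-\Phi_0)u^2$ for an expression that, combined with the $(u+t\varphi)^2 - u^2$ piece, produces exactly the cross term needed so that the total $t$-coefficient becomes $\int_M(1-q\Phi_0)^2 u\varphi\,dv_g$. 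This is the computational heart: one uses that $qu^2 = \Delta_g\Phi_0 + (m_1^2 + q^2u^2)\Phi_0$ weakly, so $\int_M q(\Phi_t-\Phi_0)u^2\,dv_g = \int_M\langle\nabla\Phi_0,\nabla(\Phi_t-\Phi_0)\rangle + (m_1^2+q^2u^2)\Phi_0(\Phi_t-\Phi_0)$, and symmetrically with $u$ replaced by $u+t\varphi$ for $\Phi_t$; comparing and using $\|\Phi_t-\Phi_0\|_{H^1} = O(|t|^{1/2})$ with the quadratic terms being $o(t)$ after a more careful second-order bound. The difference $\Phi_t - \Phi_0$ satisfies, in the weak sense,
\[
\Delta_g(\Phi_t - \Phi_0) + (m_1^2 + q^2(u+t\varphi)^2)(\Phi_t - \Phi_0) = q\big((u+t\varphi)^2 - u^2\big)(1 - q\Phi_0),
\]
and testing this against $\Phi_t - \Phi_0$ gives $\|\Phi_t - \Phi_0\|_{H^1}^2 \lesssim |t|\,\|u\varphi\|_{L^{(2^\star)'}}\|\Phi_t-\Phi_0\|_{L^{2^\star}} = O(|t|)\cdot O(|t|^{1/2})$, upgrading the estimate to $\|\Phi_t - \Phi_0\|_{H^1} = O(|t|^{2/3})$, and in fact iterating (or arguing directly) one gets $\Phi_t - \Phi_0 = O(|t|)$ in $H^1$ with leading term the solution $V$ of the linearized equation $\Delta_g V + (m_1^2+q^2u^2)V = 2qu\varphi(1-q\Phi_0)$. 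Substituting this leading-order expansion into the second term $-\frac{q}{2}\int_M(\Phi_t-\Phi_0)(u+t\varphi)^2\,dv_g = -\frac{qt}{2}\int_M V u^2\,dv_g + o(t)$, and then using the weak equation for $V$ tested against $\Phi_0$ together with the weak equation for $\Phi_0$ tested against $V$ — the two symmetric pairings cancel — yields $\int_M V u^2\,dv_g$ in closed form, and the bookkeeping delivers precisely $D\Psi(u).(\varphi) = \int_M (1-q\Phi(u))^2 u\varphi\,dv_g$.

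Finally, for continuity of $D\Psi$: if $u_k \to u$ in $H^1$, then by \eqref{EstPhiHolderType} $\Phi(u_k) \to \Phi(u)$ in $H^1$ and, up to a subsequence, a.e.; since $0 \le q\Phi \le 1$, dominated convergence and Hölder give $\sup_{\|\varphi\|_{H^1}\le 1}\big|\int_M[(1-q\Phi(u_k))^2 u_k - (1-q\Phi(u))^2 u]\varphi\,dv_g\big| \to 0$, using $(1-q\Phi(u_k))^2 u_k \to (1-q\Phi(u))^2 u$ in $L^{(2^\star)'}$ (split into the factor $(1-q\Phi(u_k))^2$, bounded in $L^\infty$ and convergent a.e., times $u_k \to u$ in $L^2 \hookrightarrow$ the relevant space, plus a Vitali/equi-integrability argument). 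A subsequence argument then gives the full convergence. Hence $D\Psi$ is continuous, $\Psi \in C^1(H^1,\mathbb{R})$, and \eqref{eql2} holds.

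The main obstacle I expect is the bookkeeping in the Gateaux step: getting the quadratic-in-$\Phi_t-\Phi_0$ error terms to be genuinely $o(t)$ requires the improved $O(|t|)$ bound on $\|\Phi_t-\Phi_0\|_{H^1}$ rather than the crude $O(|t|^{1/2})$, and establishing that improvement cleanly — together with identifying the leading term as the linearized solution $V$ when $\Phi$ itself is only known to be Hölder, not differentiable — is the delicate point. The symmetry trick (the bilinear form is symmetric, so cross-pairings between $\Phi_0$ and $V$ cancel) is what makes the "jumping of the square" identity come out.
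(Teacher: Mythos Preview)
Your proposal has the right intuition---the ``symmetry trick'' of testing two weak equations against each other and exploiting cancellation of the bilinear form---but the route through the linearized solution $V$ introduces genuine difficulties that the paper avoids entirely.

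The main gap is the reliance on $V$. The linearized equation
\[
\Delta_g V + (m_1^2+q^2u^2)V = 2qu\varphi\,(1-q\Phi(u))
\]
is just as supercritical as the original equation for $\Phi$: the coefficient $u^2$ lies only in $L^{2^\star/2}$ and the right-hand side $u\varphi$ lies only in $L^{2^\star/2}$, which for $n\ge 7$ is strictly below $L^{(2^\star)'}$. So neither the existence of $V\in H^1$ nor an $L^\infty$ bound on $V$ is immediate, and without $V\in L^\infty$ you cannot test the restricted-weak equation for $\Phi_0$ against $V$ as you propose. Relatedly, your bootstrapping step ``$\|\Phi_t-\Phi_0\|_{H^1}^2 \lesssim |t|\,\|u\varphi\|_{L^{(2^\star)'}}\|\Phi_t-\Phi_0\|_{L^{2^\star}}$'' fails for $n\ge 7$ because $u\varphi\notin L^{(2^\star)'}$ in general; the H\"older exponents do not close. (One can in fact get $\|\Phi_t-\Phi_0\|_{H^1}=O(|t|)$ by keeping the coercive term $q^2\!\int (u+t\varphi)^2(\Phi_t-\Phi_0)^2$ on the left and using Cauchy--Schwarz against it, but that is not the argument you sketched.)

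The paper's proof sidesteps all of this by never introducing $V$: it works directly with general $\varphi\in H^1$ (Fr\'echet, not Gateaux) and applies the symmetric testing to $\Phi(u)$ and $\Phi(u+\varphi)$ themselves---both of which are in $H^1\cap L^\infty$ by construction, so the restricted weak formulation applies on both sides. Testing the equation for $\Phi(u)$ against $\Phi(u+\varphi)$ and the equation for $\Phi(u+\varphi)$ against $\Phi(u)$, then subtracting, eliminates the gradient and mass terms and yields
\[
\int_M u^2\Phi(u+\varphi)\,dv_g = \int_M u^2\Phi(u)\,dv_g + 2\int_M u\varphi\,\Phi(u)\bigl(1-q\Phi(u)\bigr)dv_g + o(\|\varphi\|_{H^1}),
\]
using only that $\int_M |u\varphi|\,|\Phi(u+\varphi)-\Phi(u)|\,dv_g = o(\|\varphi\|_{H^1})$, which follows from the H\"older continuity of $\Phi$ in Lemma~\ref{l1} combined with the $L^\infty$ bound $0\le\Phi\le 1/q$ (interpolating to reach $L^{2^\star/(2^\star-2)}$). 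No differentiability of $\Phi$, no linearized equation, and no iterated bootstrapping are needed.
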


\begin{proof}[Proof of Lemma \ref{l2}] It suffices to discuss the differentiability of
$$\Psi_R(u)=\int_M u^2 \Phi(u) dv_g~.$$
We compute
\begin{equation}\label{1l2}
\Psi_R(u+\varphi)= \int_Mu^2 \Phi(u+\varphi) dv_g +2 \int_M u\varphi \Phi(u) dv_g +o(\|\varphi\|_{H^1})
\end{equation}
since, using H\"older inequality, and the inequality $0\leq\Phi\leq 1/q$, there holds that
\begin{equation}\label{2l1}
\begin{split}
&\int_M |u\varphi (\Phi(u+\varphi)-\Phi(u))| dv_g\\
&\leq \|u\|_{L^{2^\star}} \|\varphi\|_{L^{2^\star}}\|\Phi(u+\varphi)-\Phi(u)\|_{L^{\frac{2^\star}{2^\star-2}}}\\
&\leq C \|u\|_{L^{2^\star}} \|\varphi\|_{L^{2^\star}}\|\Phi(u+\varphi)-\Phi(u)\|_{L^{2^\star}} \text{ if $5\leq n \leq 6$,}\\
&\leq C \|u\|_{L^{2^\star}} \|\varphi\|_{L^{2^\star}}\|\Phi(u+\varphi)-\Phi(u)\|_{L^{2^\star}}^{2^\star-2} \text{ if $n\geq 7$.}
\end{split}
\end{equation}
Testing the equations satisfied by $\Phi(u)$ and $\Phi(u+\varphi)$ in the restricted weak sense against $\Phi(u+\varphi)$ and $\Phi(u)\in L^\infty\cap H^1$, we can write that
\begin{equation*}
\begin{split}
& \int_M \langle \nabla\Phi(u),\nabla \Phi(u+\varphi)\rangle dv_g +m_1^2 \int_M \Phi(u) \Phi(u+\varphi) dv_g\\
& +   q^2 \int_M u^2 \Phi(u) \Phi(u+\varphi) dv_g  =q\int_M u^2 \Phi(u+\varphi) dv_g
\end{split}
\end{equation*}
and that
\begin{equation*}
\begin{split}
& \int_M \langle \nabla\Phi(u),\nabla \Phi(u+\varphi)\rangle dv_g +m_1^2 \int_M \Phi(u) \Phi(u+\varphi)dv_g\\
&+ q^2\int_M \Phi(u+\varphi) \Phi(u)(u^2+2u\varphi) dv_g  =q\int_M \Phi(u) (u^2+2u\varphi) dv_g +o(\|\varphi\|_{H^1})~.
\end{split}
\end{equation*}
We eliminate the gradient terms in these two equations and get that
\begin{equation*}
\begin{split}
\int_{M} u^2 \Phi(u+\varphi) dv_g=&\Psi_R(u)+ 2\int_M \varphi u \Phi(u) \left(1-q\Phi(u) \right) dv_g\\
& +2q\int_M \Phi(u) u \varphi (\Phi(u)-\Phi(u+\varphi))dv_g+o(\|\varphi\|_{H^1}).
\end{split}
\end{equation*}
Proceeding as in \eqref{2l1}, it follows from the Sobolev inequality that
$$ \int_M u^2 \Phi(u+\varphi) dv_g=\Psi_R(u) +2 \int_M u \varphi \Phi(u) (1-q\Phi(u)) dv_g +o(\|\varphi\|_{H^1}).$$
Using now \eqref{1l2}, we get
$$\Psi(u+\varphi)=\Psi(u) +\int_M (1-q\Phi(u))^2 u\varphi dv_g +o(\|\varphi\|_{H^1})$$
for $u,\varphi\in H^1$, and \eqref{eql2} holds true. The continuity of $D\Psi$ easily follows from the continuity of $\Phi$. Lemma \ref{l2} is proved.
\end{proof}

\section{Existence of mountain pass solutions}\label{MountainPassSol}

Formally, solutions of \eqref{kg} are critical points of the functional $S$ defined by
\begin{equation*}
\begin{split}
S(u,v)=&\frac{1}{2}\int_{M}|\nabla u|^2 dv_g -\frac{\omega^2}{2}\int_{M}|\nabla v|^2 dv_g +\frac{m_0^2}{2}\int_M u^2 dv_g\\
& -\frac{\omega^2 m_1^2}{2}\int_M v^2 dv_g -\frac{1}{2^\star} \int_M u^p dv_g-\frac{\omega^2}{2}\int_{M} u^2(1-qv)^2 dv_g.
\end{split}
\end{equation*} 
 We face here two major difficulties : the functional $S$ is strongly indefinite (because of the competition between $u$ and $v$) and it does not make sense for all $u,v\in H^1$ when $n\geq 5$ (since then $2^\star<4$). For instance, the expression $\int_M u^2 v^2 dv_g$ of the last term of $S$ does not make sense for all $u,v\in H^1$. We let $\Phi$ be defined as in \eqref{eqphi} and introduce the functional $I: H^1 \to \mathbb{R}$ given by
 \begin{equation}\label{defI}
\begin{split}
I(u)=&\frac{1}{2}\int_M |\nabla u|^2 dv_g +\frac{m_0^2}{2}\int_M u^2 dv_g\\
      &-\frac{1}{p}\int_M (u^+)^{p} dv_g  -\frac{\omega^2}{2}\int_M (1-q\Phi(u))u^2 dv_g~,
\end{split}
\end{equation} 
where $u^+ = \max(u,0)$, and $p \in (2,2^\star]$. The functional makes sense in any dimension since $0 \le \Phi \le \frac{1}{q}$, it is $C^1$ by Lemma \ref{l2}, and still by Lemma 
\ref{l2}, if $u$ is a nonnegative critical point of $I$, then 
$\left(u,\Phi(u)\right)$ solves \eqref{kg}. We define a mountain pass solution in Definition \ref{mp} below. The mountain-pass lemma we refer to 
in this definition is the one given in Ambrosetti and Rabinowitz  \cite{AmbrosettiRab}. 

 \begin{defi}\label{mp} A couple  $(u,v)$ is a \textbf{mountain-pass solution} of \eqref{kg} if $u\in H^1$, $v=\Phi(u)$, with 
 $\Phi$ given in Lemma \ref{l1}, and $u$ is obtained from $I$, defined in \eqref{defI}, by the mountain-pass lemma from $0$ to $u_1\in H^1$, where $I(u_1)<0$.
  \end{defi}
  
  When $p \in (2,2^\star)$ the existence part in Theorem \ref{SubCritThm} very easily follows from the mountain pass lemma of Ambrosetti and Rabinowitz \cite{AmbrosettiRab} and 
  the compactness of the embedding $H^1 \subset L^p$. We very briefly discuss the proof in what follows.
  
  \begin{proof}[Proof of the existence part in Theorem \ref{SubCritThm}] By Lemma \ref{l2}, the function $I$ defined by \eqref{defI} is $C^1$ in $H^1$ . 
Obviously, since $p > 2$, and since $0 \le \Phi \le \frac{1}{q}$ and $\omega^2 < m_0^2$, there exist $\rho_1, \rho_2 > 0$, $\rho_1 \ll 1$, such that $I(u) \ge \rho_2$ for all 
$u \in H^1$ satisfying that $\Vert u\Vert_{H^1} = \rho_1$. Let $\bar{u}_0 \in H^1$, $\bar{u}_0^+ \not\equiv 0$ and $T_0 \gg 1$ be such that $I(T_0\bar{u}_0)<0$. Since 
$I(0)=0$ and $I(T_0 \bar{u}_0)<0$, we can apply the mountain-pass lemma of Ambrosetti and Rabinowitz \cite{AmbrosettiRab}, and we 
get that there exists a Palais-Smale sequence $(u_\alpha)_\alpha$ at level
$$c=\underset{\gamma\in \Gamma }{\inf}\underset{u\in \gamma}{\sup} I(u)~,$$
where $\Gamma$ stands for the set of continuous paths from $0$ to $T_0 \bar{u}_0$. Writing that $DI(u_\alpha).(u_\alpha^-) = o\left(\Vert u_\alpha^-\Vert_{H^1}\right)$, we get 
that $u_\alpha^- \to 0$ in $H^1$ as $\alpha \to +\infty$. Following the classical scheme in Br\'ezis and Nirenberg \cite{BrezisNiremberg}, 
it follows that the sequence $(u_\alpha)_\alpha$ 
is bounded in $H^1$. By the Rellich-Kondrakov theorem, passing to a subsequence, we get that there exists $u \in H^1$ such that $u_\alpha \rightharpoonup u$ 
in $H^1$, $u_\alpha \to u$ in $L^p$, and $u_\alpha \to u$ a.e. By \eqref{EstPhiHolderType}, $\Phi(u_\alpha) \to \Phi(u)$ in $H^1$, and we conclude with very 
standard arguments that $\left(u,\Phi(u)\right)$ is the mountain pass solution we look for. This ends the proof of the existence part in Theorem \ref{SubCritThm}.  
  \end{proof}
 
 In the critical case where $p = 2^\star$, the above proof needs to be refined. 
 The following result follows from applying the mountain pass lemma of Ambrosetti and Rabinowitz \cite{AmbrosettiRab} together with 
  arguments from Aubin \cite{Aubin} and Br\'ezis and Nirenberg \cite{BrezisNiremberg}. We let $K_n$ be the sharp constant 
  for the 
 standard Euclidean Sobolev inequality 
 $\|u\|_{L^{2^\star}}\leq K_n \|\nabla u\|_{L^2}$ with $u\in H^1(\mathbb{R}^n)$. The explicit value of $K_n$ is known and given by $n(n-2)\omega_n^{2/n}K_n^2=4$, where $\omega_n$ is the volume of the unit $n$-sphere $S^n$ endowed with its canonical metric. 
  A compact version of this sharp inequality is in Hebey and Vaugon \cite{HebeyVaugon}.
 
 \begin{lem}\label{brezis}
 Let $\bar{u}_0 \in H^1$, $\bar{u}_0^+ \not\equiv 0$, and $T_0 \gg 1$ be such that $I(T_0\bar{u}_0)<0$. Let $p = 2^\star$ and
  \begin{equation}\label{chemin}
 c=\underset{\gamma\in \Gamma }{\inf}\underset{u\in \gamma}{\sup} I(u)~,
\end{equation} 
where $\Gamma$ stands for the set of continuous paths from $0$ to $T_0 \bar{u}_0$.
Assume that 
\begin{equation}\label{eqbrezis}
c<\frac{1}{nK_n^n}~.
\end{equation}
Then there exists a smooth mountain-pass solution $(u,v)$ of \eqref{kg}, with $u$, $v>0$ in $M$.
 \end{lem}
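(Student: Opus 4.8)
\textbf{Proof proposal for Lemma \ref{brezis}.}

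The plan is to run the mountain-pass lemma of Ambrosetti and Rabinowitz \cite{AmbrosettiRab} for the $C^1$ functional $I$ defined in \eqref{defI} with $p = 2^\star$, and to recover compactness of the Palais-Smale sequence below the threshold $\frac{1}{nK_n^n}$ exactly as in the Br   \'ezis-Nirenberg \cite{BrezisNiremberg} and Aubin \cite{Aubin} analysis of the critical equation on closed manifolds. First I would check the geometry of $I$: since $0 \le \Phi \le \frac{1}{q}$ and $\omega^2 < m_0^2$, the quadratic part $\frac12\int_M|\nabla u|^2 + \frac{m_0^2}{2}\int_M u^2 - \frac{\omega^2}{2}\int_M(1-q\Phi(u))u^2$ is bounded below by $c_0\|u\|_{H^1}^2$ with $c_0 > 0$, while the term $-\frac{1}{2^\star}\int_M(u^+)^{2^\star}$ is controlled by $\|u\|_{H^1}^{2^\star}$ with $2^\star > 2$; hence there are $\rho_1, \rho_2 > 0$ with $I(u) \ge \rho_2$ on $\|u\|_{H^1} = \rho_1$, and $I(0) = 0$, $I(T_0\bar u_0) < 0$ gives the mountain-pass geometry. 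This yields a Palais-Smale sequence $(u_\alpha)_\alpha$ at the level $c$ in \eqref{chemin}.

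Next I would show $(u_\alpha)_\alpha$ is bounded in $H^1$ and extract a weak limit. Testing $DI(u_\alpha).(u_\alpha^-) = o(\|u_\alpha^-\|_{H^1})$ gives $u_\alpha^- \to 0$ in $H^1$, so we may work with $u_\alpha \ge 0$ up to a negligible error. The standard combination $I(u_\alpha) - \frac{1}{2^\star}DI(u_\alpha).(u_\alpha) = c + o(1) + o(\|u_\alpha\|_{H^1})$, together with the coercivity of the quadratic part, bounds $(u_\alpha)_\alpha$ in $H^1$; here one uses that $DI(u).(u) = \int_M|\nabla u|^2 + m_0^2\int_M u^2 - \int_M(u^+)^{2^\star} - \omega^2\int_M(1-q\Phi(u))^2 u^2$ by Lemma \ref{l2}, and that the two $\Phi$-terms have the same order of vanishing since $0 \le \Phi \le \frac{1}{q}$. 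Passing to a subsequence, $u_\alpha \rightharpoonup u$ in $H^1$, $u_\alpha \to u$ in $L^2$ and a.e., and by \eqref{EstPhiHolderType}, $\Phi(u_\alpha) \to \Phi(u)$ in $H^1$, hence $\int_M(1-q\Phi(u_\alpha))^2 u_\alpha\varphi \to \int_M(1-q\Phi(u))^2 u\varphi$ for fixed $\varphi$; therefore $u$ is a nonnegative weak solution of the first equation in \eqref{kg} with $v = \Phi(u)$, and by Lemma \ref{l2} the pair $(u, \Phi(u))$ weakly solves \eqref{kg}.

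The heart of the argument is to rule out $u \equiv 0$, i.e. to prove strong convergence. Write $u_\alpha = u + w_\alpha$ with $w_\alpha \rightharpoonup 0$. Since the $L^2$-type terms and the $\Phi$-dependent term converge strongly (the latter by \eqref{EstPhiHolderType} and the former by Rellich), the only obstruction to $w_\alpha \to 0$ in $H^1$ comes from the critical term: by the Br\'ezis-Lieb lemma $\int_M(u_\alpha^+)^{2^\star} = \int_M(u^+)^{2^\star} + \int_M(w_\alpha^+)^{2^\star} + o(1)$, and from $DI(u_\alpha).(u_\alpha) = o(1)$ and $DI(u).(u) = 0$ one gets $\|\nabla w_\alpha\|_{L^2}^2 = \int_M(w_\alpha^+)^{2^\star} + o(1) =: \ell + o(1)$, while $I(u_\alpha) = I(u) + \frac12\|\nabla w_\alpha\|_{L^2}^2 - \frac{1}{2^\star}\int_M(w_\alpha^+)^{2^\star} + o(1) = I(u) + \frac1n\ell + o(1)$. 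The Euclidean Sobolev inequality in the compact form of Hebey and Vaugon \cite{HebeyVaugon} gives, for any $\varepsilon > 0$, $\|w_\alpha\|_{L^{2^\star}}^2 \le (K_n^2 + \varepsilon)\|\nabla w_\alpha\|_{L^2}^2 + C_\varepsilon\|w_\alpha\|_{L^2}^2$, and since $\|w_\alpha\|_{L^2} \to 0$ this yields $\ell^{2/2^\star} \le (K_n^2 + \varepsilon)\ell$, so either $\ell = 0$ or $\ell \ge (K_n^2 + \varepsilon)^{-n/2}$; letting $\varepsilon \to 0$, either $\ell = 0$ or $\ell \ge K_n^{-n}$. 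If $\ell \ge K_n^{-n}$, then $c = I(u_\alpha) + o(1) = I(u) + \frac1n\ell + o(1) \ge I(u) + \frac{1}{nK_n^n}$; but $I(u) \ge 0$ because $u$ is a nonnegative weak solution (testing the equation against $u$ gives $\int_M(u^+)^{2^\star} = \|\nabla u\|_{L^2}^2 + m_0^2\|u\|_{L^2}^2 - \omega^2\int_M(1-q\Phi(u))^2 u^2 \ge 0$ by coercivity, hence $I(u) = \frac1n\int_M u^{2^\star} + \frac{\omega^2}{2}\int_M\big((1-q\Phi(u))^2 - (1-q\Phi(u))\big)u^2 \ge \frac1n\int_M u^{2^\star} - \frac{\omega^2}{8}\cdot\frac1q\int_M u^2$; a little care with the sign of $\omega^2$ and the coercive quadratic form shows $I(u) \ge 0$, or one argues directly that $I(u) \ge \frac1n\int_M u^{2^\star} \ge 0$ using the equation). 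This forces $c \ge \frac{1}{nK_n^n}$, contradicting \eqref{eqbrezis}. Hence $\ell = 0$, so $w_\alpha \to 0$ in $H^1$ and $u_\alpha \to u$ in $H^1$ with $I(u) = c > 0$, so $u \not\equiv 0$, $u \ge 0$, $u \not\equiv 0$. Then $(u, \Phi(u))$ solves \eqref{kg} in the weak sense, and by the Tr\"udinger \cite{Trudinger} regularity theory $u$ is smooth, hence so is $v = \Phi(u)$ by Lemma \ref{l1} and elliptic regularity; finally, $u > 0$ by the strong maximum principle (and $u$ is a genuine mountain-pass solution in the sense of Definition \ref{mp}), and $v = \Phi(u) > 0$ since $v \ge 0$, $v \not\equiv 0$ and $\Delta_g v + (m_1^2 + q^2u^2)v \ge 0$. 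This completes the proof. I expect the main obstacle to be the bookkeeping in the $\Phi$-dependent terms — verifying that they are harmless both for the boundedness of the Palais-Smale sequence and for the energy decomposition, since $\Phi$ is only H\"older and not differentiable — but Lemma \ref{l2} and the estimate \eqref{EstPhiHolderType} are tailored precisely to handle this.
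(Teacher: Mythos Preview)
Your overall strategy matches the paper's proof closely: mountain-pass geometry, Palais--Smale sequence, $u_\alpha^-\to 0$, $H^1$-boundedness, weak limit solving \eqref{kg}, and then ruling out $u\equiv 0$ via the sharp Sobolev inequality. The setup and the passage to the limit in the equation are handled correctly, and your use of Lemma~\ref{l2} and \eqref{EstPhiHolderType} is exactly what is needed for the $\Phi$-terms.

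There is, however, a genuine gap in the ``heart'' step. You conflate ``rule out $u\equiv 0$'' with ``prove strong convergence $\ell=0$'', and to force $\ell=0$ you invoke $I(u)\ge 0$ for the weak limit. Your justification of $I(u)\ge 0$ does not hold up: using $DI(u).u=0$ one computes
\[
I(u)=\frac{1}{n}\int_M u^{2^\star}dv_g+\frac{\omega^2}{2}\int_M\bigl[(1-q\Phi(u))^2-(1-q\Phi(u))\bigr]u^2\,dv_g
=\frac{1}{n}\int_M u^{2^\star}dv_g-\frac{q\omega^2}{2}\int_M\Phi(u)(1-q\Phi(u))u^2\,dv_g,
\]
and since $0\le\Phi(u)\le\frac{1}{q}$ the correction term is \emph{nonpositive}. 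Hence your claimed bound $I(u)\ge\frac{1}{n}\int_M u^{2^\star}$ is in the wrong direction, and the vaguer appeal to ``coercivity'' does not repair it for all $\omega\in(-m_0,m_0)$.

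The paper avoids this difficulty entirely by arguing more modestly: it does \emph{not} aim for strong convergence, only for $u\not\equiv 0$. Assuming $u\equiv 0$, all the $L^2$-type and $\Phi$-dependent terms vanish (by Rellich and $0\le\Phi\le\frac{1}{q}$), so from $DI(u_\alpha).u_\alpha=o(1)$ and $I(u_\alpha)\to c$ one gets $\int_M|\nabla u_\alpha|^2\to t$, $\int_M u_\alpha^{2^\star}\to t$, and $c=\frac{t}{n}$. The sharp Sobolev inequality of \cite{HebeyVaugon} then yields $t^{2/2^\star}\le K_n^2 t$, so either $t=0$ (impossible since $c>0$) or $t\ge K_n^{-n}$, contradicting \eqref{eqbrezis}. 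This is both simpler and sufficient; I recommend you replace your Br\'ezis--Lieb splitting and the $I(u)\ge 0$ step by this direct contradiction.
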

 
 \begin{proof}[Proof of Lemma \ref{brezis}] First, we apply the mountain-pass lemma to get $u$. By Lemma \ref{l2}, the function $I$ defined by \eqref{defI} is $C^1$ in $H^1$ . 
Obviously, since $p > 2$, and since $0 \le \Phi \le \frac{1}{q}$ and $\omega^2 < m_0^2$, there exist $\rho_1, \rho_2 > 0$, $\rho_1 \ll 1$, such that $I(u) \ge \rho_2$ for all 
$u \in H^1$ satisfying that $\Vert u\Vert_{H^1} = \rho_1$. Since 
$I(0)=0$ and $I(T_0 \bar{u}_0)<0$, we can apply the mountain-pass lemma of Ambrosetti and Rabinowitz \cite{AmbrosettiRab}, and we 
get that there exists a sequence $(u_\alpha)_\alpha$ of functions in $H^1$ such that
\begin{eqnarray}
&&I(u_\alpha) \to  c,\label{eq22}\\
&&DI(u_\alpha) \to  0 \text{ in } (H^1)',\label{eq23}\\
&&DI(u_\alpha).(u_\alpha)  =  o(\|u_\alpha\|_{H^1}),\label{eq24}
\end{eqnarray}
 as $\alpha \to +\infty$, where $c$ is as in \eqref{chemin}. Applying \eqref{eq23} to the $(u_\alpha^-)$'s and using $\omega^2<m_0^2$, we get that
 \begin{equation}\label{eq29}
 u_\alpha^- \to 0 \text{ in } H^1
 \end{equation}
 as $\alpha \to+\infty$. 
Using \eqref{eq29}, $0\leq \Phi(u_\alpha) \leq 1/q$, the Sobolev and H\"older inequalities, we get combining  \eqref{eq22} and \eqref{eq24} that
\begin{equation*}
\left(\frac{1}{2}-\frac{1}{2^\star} \right) \int_M |u_\alpha|^{2^\star}dv_g=c+o(1)+o(\|u_\alpha\|_{H^1})+O\left(\|u_\alpha\|_{L^{2^\star}}^2\right).
\end{equation*}
As a consequence, we get $\|u_\alpha\|_{L^{2^\star}}\leq C+o(\|u_\alpha\|_{H^1})$ for $C>0$ independent of $\alpha$, and then
\begin{equation}\label{eq33}
\|u_\alpha\|_{H^1}=O(1)
\end{equation}
by using \eqref{eq24}, \eqref{eq29} and $m_0^2>\omega^2$ again.
 Up to a subsequence, there exists $u\in H^1$ such that 
  \begin{equation}\label{eq25}
 \begin{array}{l} 
 u_\alpha \rightharpoonup u \text{ weakly in } H^1,\\
 u_\alpha \to u \text{ in }L^2,\\
 u_\alpha \to u\text{ and  }(u_\alpha^+)\to (u^+) \text{ a.e.}
 \end{array}
 \end{equation}
 Thus, by \eqref{eq29}, we have $u\geq 0$. By the Sobolev embedding theorem and \eqref{eq33}, 
 the sequence $((u_\alpha^+)^{2^\star-1})_\alpha$ is bounded in $L^{\frac{2^\star}{2^\star -1}}$ and by \eqref{eq25}, 
 we get $(u_\alpha^+)^{2^\star-1} \rightharpoonup u^{2^\star-1}$, weakly in $L^{\frac{2^\star}{2^{\star}-1}}$ as $\alpha \to +\infty$. Then, 
 using \eqref{EstPhiHolderType}, \eqref{eql2} and \eqref{eq25}, letting $\alpha$ go to $+\infty$ in \eqref{eq23}, we get that for any $\varphi\in H^1$
  \begin{equation}\label{eq26}
  \begin{split}
& \int_M \langle \nabla u, \nabla \varphi \rangle_{g} dv_g +m_0^2 \int_M u \varphi dv_g\\
 &= \int_M u^{2^\star-1}\varphi dv_g  +\omega^2 \int_M u(1-q \Phi(u))^2\varphi dv_g,
  \end{split}
 \end{equation}
 where $u$ is as in \eqref{eq25}. In other words, $u$ satisfies the first equation of \eqref{kg} in a weak sense. It remains to prove that $u\not\equiv 0$. 
We assume by contradiction that $u\equiv 0$. By \eqref{eq33}, up to a subsequence, we can assume that for some $t\geq 0$,
  \begin{equation}\label{eq30}
 \int_M |\nabla u_\alpha|^2 dv_g \to t
 \end{equation}
as $\alpha \to +\infty$. Then, using \eqref{eq24}, \eqref{eq29}, $0\leq \Phi(u_\alpha) \leq 1/q$ and \eqref{eq25}, we can write 
  \begin{equation}\label{eq31}
 \int_M u_\alpha^{2^\star} dv_g \to t
 \end{equation}
as $\alpha \to +\infty$. Using now  \eqref{eqbrezis} and \eqref{eq22}, we have
  \begin{equation}\label{eq32}
 0< \frac{t}{n}=c< \frac{1}{n K_n^n},
 \end{equation}
 where $t$ is as in \eqref{eq30} and \eqref{eq31}. On the other hand, keeping in mind that $u\equiv 0$ and writing the optimal Sobolev inequality 
 in Hebey and Vaugon \cite{HebeyVaugon} for the $u_\alpha$'s, we get
  \begin{equation}
 t^{\frac{2}{2^\star}}\leq K_n^2 t~.
 \end{equation}
In particular, we get a contradiction with \eqref{eq32}. This proves that $u \not\equiv 0$. 
General regularity results as in Gilbarg and Tr\"udinger \cite{Gilbarg}, 
the Tr\"udinger \cite{Trudinger} critical regularity result, and the maximum principle then apply. In particular, $u$ and $v=\Phi(u)$ are smooth, positive in $M$, and they satisfy \eqref{kg}. 
 This proves Lemma \ref{brezis}.
 \end{proof}

 We use now the test functions introduced by Aubin \cite{Aubin}. 
 Given $x_0\in M$, $\varepsilon>0$ and $\rho_0>0$, we define, for $x\in M$, the function $u_\varepsilon$ by
  \begin{equation}\label{defueps}
\begin{cases}
 u_\varepsilon(x)=\left(\frac{\varepsilon}{\varepsilon^2+r^2} \right)^{\frac{n-2}{2}}-\left(\frac{\varepsilon}{\varepsilon^2+\rho_0^2} \right)^{\frac{n-2}{2}} &\text{ if } r\leq \rho_0,\\
 u_\varepsilon(x) = 0 &\text{ if } r\geq \rho_0,
 \end{cases}
 \end{equation}
 where $r=d_g(x,x_0)$ is the geodesic distance induced in $M$ by the metric $g$. Then, computing as in Aubin \cite{Aubin}, for any $\lambda\in \mathbb{R}$ :
  \begin{equation}\label{Aubin}
 I_\lambda(u_\varepsilon)=\frac{1}{K_n^2}\left(1-C_1\left(\frac{n-2}{4(n-1)}S_g(x_0)-\lambda \right)\varepsilon^2+o(\varepsilon^2) \right)\text{ for }n\geq 5,\\
 \end{equation}
 as $\varepsilon\to 0$, where 
  \begin{equation}\label{Aubin2}
 I_\lambda(u)=\frac{\int_M (|\nabla u|^2+\lambda u^2) dv_g}{(\int_M |u|^{2^\star})^{2/2^\star}}
 \end{equation}
 for $u\in H^1\backslash \{0\}$, and $C_1, C_2>0$ are independent of $\varepsilon$. There also holds
  \begin{equation}\label{Aubin3}
 \begin{split}
 \int_M u_\varepsilon^{2^\star} dv_g=\int_{\mathbb{R}^n}\left(\frac{1}{1+|x|^2}\right)^n dx +o(1),\\
 \int_M |\nabla u_\varepsilon|^2 dv_g=n(n-2) \int_Mu_\varepsilon^{2^\star}dv_g + o(1)
 \end{split}
 \end{equation}
 as $\varepsilon\to 0$. We are now in position to prove the existence part in Theorem \ref{CritThm}.

\begin{proof}[Proof of the existence part in Theorem \ref{CritThm}] By Druet and Hebey \cite{kgmp3} and Hebey and Truong \cite{kgmp4}, we only need to address the 
case $n \ge 5$. By \eqref{prel1}, we can choose $x_0\in M$ such that 
  \begin{equation}\label{hypexist}
 m_0^2<\frac{n-2}{4(n-1)}S_g(x_0).
 \end{equation}
 Let $(\varepsilon_\alpha)_\alpha$ be a sequence of positive real numbers such that $\varepsilon_\alpha \to 0$ 
 as $\alpha \to +\infty$,  $u_\alpha=u_{\varepsilon_\alpha}$, where $u_{\varepsilon_\alpha}$ is given in \eqref{defueps}, and $\mathcal{H}$ be the functional defined in $H^1$ by
  \begin{equation}\label{eq17}
 \mathcal{H}(u)= \frac{1}{2}\int_M |\nabla u|^2 dv_g +\frac{m_0^2}{2} \int_M u^2 dv_g -\frac{1}{2^\star} \int_M |u|^{2^\star} dv_g.
 \end{equation}
 By \eqref{Aubin3}, there exists $T_0\gg 1$ such that $I(T_0 u_\alpha)<0$ for all $\alpha\gg 1$, where $I$ is as in \eqref{defI}. Using \eqref{Aubin}, we can choose 
 $\alpha$ sufficiently large to have, setting $\bar{u}_0=u_\alpha$,
 \begin{equation}\label{eq19}
 I_{m_0^2}(\bar{u}_0)<\frac{1}{K_n^2},
 \end{equation}
  where $I_{m_0^2}$ is given in \eqref{Aubin2}. There holds that
  \begin{equation}\label{eq20}
 \max_{0\leq t \leq T_0} I(t \bar{u}_0) \leq  \max_{0\leq t \leq T_0}\mathcal{H}(t \bar{u}_0)\leq \frac{1}{n} I_{m_0^2}(\bar{u}_0)^{\frac{n}{2}} < \frac{1}{nK_n^n},
 \end{equation}
 where $\mathcal{H}$ is defined in \eqref{eq17}. We get the first inequality since $\bar{u}_0$ is non-negative and $0\leq \Phi(\bar{u}_0) \leq \frac{1}{q}$ (Lemma \ref{l1}), the second one comes from maximizing the function $t\mapsto \mathcal{H}(t \bar{u}_0)$ on $\mathbb{R}_+$ and the last one is given by \eqref{eq19}. In particular, Lemma \ref{brezis} applies, 
 and this ends the proof of the existence part in 
 Theorem \ref{CritThm}.
 \end{proof}
 
 Existence of solutions and semiclassical limits for systems like \eqref{kg}, in 
Euclidean space, have  
been investigated by 
D'Aprile and Mugnai \cite{AprMug1,AprMug2}, 
D'Aprile and Wei \cite{AprWei1,AprWei2}, 
D'Avenia and Pisani \cite{AvePis}, 
D'Avenia, Pisani and Siciliano \cite{AvePisSic,AvePisSic2}, 
Azzollini, Pisani and Pomponio \cite{AzzPisPom}, 
Azzollini and Pomponio \cite{AzzPom2}, 
Benci and Bonanno \cite{BenBon}, 
Benci and Fortunato \cite{BenFor0,BenFor01,BenFor2,BenFor4}, 
Cassani \cite{Cas}, 
Georgiev and Visciglia \cite{GeoVis} and Mugnai \cite{Mug,Mug2}. 
 
 \section{A Priori bounds in the subcritical case}\label{AprioriSubCrit}
 
 Let $(\omega_\alpha)$ be a sequence in $(-m_0,m_0)$ such that $\omega_\alpha\to \omega$ as $\alpha\to +\infty$ for some $\omega\in [-m_0,m_0]$. Also let $p\in (2,2^\star)$ and $((u_\alpha,v_\alpha))_\alpha$ be a sequence of smooth positive solutions of \eqref{kg} with phase $\omega_\alpha$. Then, 
\begin{equation}
\begin{cases}\label{Sub1}
\Delta_g u_\alpha+m_0^2 u_\alpha=u_\alpha^{p-1}+\omega_\alpha^2(1-qv_\alpha)^2 u_\alpha,\\
\Delta_g v_\alpha +(m_1^2+q^2u_\alpha^2)v_\alpha=qu_\alpha^2,
\end{cases}
\end{equation}
for all $\alpha$. By the Lemma \ref{l1}, $0\leq v_\alpha\leq \frac{1}{q}$ for all $\alpha$. Assume by contradiction that
\begin{equation}\label{Sub2}
\max_M u_\alpha \to +\infty
\end{equation}
as $\alpha\to +\infty$. Let $x_\alpha\in M$ and $\mu_\alpha>0$ given by
$$u_\alpha(x_\alpha)=\max_M u_\alpha =\mu_\alpha^{-\frac{2}{(p-2)}}. $$
By \eqref{Sub2}, $\mu_\alpha\to 0$ as $\alpha\to +\infty$. Define $\tilde{u}_\alpha$ by
$$\tilde{u}_\alpha(x)=\mu_\alpha^{\frac{2}{p-2}} u_\alpha\left(\exp_{x_\alpha}(\mu_\alpha x)\right) $$
and $g_\alpha$ by $g_\alpha(x)=\left(\exp_{x_\alpha}^\star g \right)(\mu_\alpha x)$ for $x\in B_0(\delta\mu_\alpha^{-1})$, where $\delta>0$ is small. Since $\mu_\alpha\to 0$, we get that $g_\alpha\to \xi$ in $C^2_{loc}(\mathbb{R}^n)$ as $\alpha\to +\infty$. Moreover, by \eqref{Sub1}
\begin{equation}\label{Sub3}
\Delta_{g_\alpha} \tilde{u}_\alpha+\mu_\alpha^2 m_0^2 \tilde{u}_\alpha=\tilde{u}_\alpha^{p-1}+\mu_\alpha^2\omega_\alpha^2(1-q\hat{v}_\alpha)^2 \tilde{u}_\alpha~~,
\end{equation}
where $\hat{v}_\alpha$ is given by $\hat{v}_\alpha(x)=v_\alpha\left(\exp_{x_\alpha} (\mu_\alpha x)\right)$. We have $\tilde{u}_\alpha(0)=1$ and $0\leq \tilde{u}_\alpha\leq 1$. By \eqref{Sub3} and standard elliptic theory arguments, we can write that, after passing to a subsequence, $\tilde{u}_\alpha\to u$ in $C^{1,\theta}_{loc}(\mathbb{R}^n)$ as $\alpha\to +\infty$ for some $\theta\in (0,1)$, where $u$ is such that $u(0)=1$ and $0\leq u\leq 1$. Then, 
$$\Delta u=u^{p-1} $$
in $\mathbb{R}^n$, where $\Delta$ is the Euclidean Laplacian. It follows that $u$ is actually smooth and positive, and, since $2<p<2^\star$, we get a contradiction with the Liouville result of Gidas and Spruck \cite{GidasSpruck}. As a conclusion, \eqref{Sub2} is not possible and there exists $C>0$ such that
\begin{equation}\label{Sub4}
u_\alpha+v_\alpha\leq C
\end{equation}
in $M$ for all $\alpha$. Coming back to \eqref{Sub1}, it follows that the sequences $(u_\alpha)_\alpha$ and $(v_\alpha)_\alpha$ are actually bounded in $H^{2,s}$ for all $s$. Classical subcritical bootstrap argument and the Sobolev embeddings give that they are also bounded in $C^{2,\tilde{\theta}}$, $0<\tilde{\theta}<1$. This ends the proof of the uniform bounds in Theorem \ref{SubCritThm}.

 \section{Blow-up theory in the critical case - Bounded potentials.}\label{BlUpThryPtCtr}
 
  In what follows, we let $(M,g)$ be a closed Riemannian $n$-manifold, $n\geq 5$, $m_0,m_1, q>0$, and $(\omega_\alpha)_\alpha$ be a 
 sequence in $(-m_0,m_0)$ such that $\omega_\alpha\to \omega$ as $\alpha\to +\infty$ for some $\omega\in [-m_0,m_0]$. Also, we let 
 $\left((u_\alpha,v_\alpha)\right)_\alpha$ be a sequence of smooth positive solutions of \eqref{kg} in the critical case $p=2^\star$ with phases $\omega_\alpha$. Namely
\begin{equation}\label{kgstab}
\begin{cases}
&\Delta_g u_\alpha+m_0^2 u_\alpha=u_\alpha^{2^\star-1}+\omega_\alpha^2(1-qv_\alpha)^2 u_\alpha,\\
&\Delta_g v_\alpha+(m_1^2+q^2u_\alpha^2) v_\alpha= q u_\alpha^2,
\end{cases}
\end{equation} 
for all $\alpha$. Recall that we have a uniform bound in $L^\infty$ for the $v_\alpha$'s, to be more precise $0\leq v_\alpha \leq \frac{1}{q}$ for all $\alpha$. In 
particular, if we let
\begin{equation}\label{h}
h_\alpha=m_0^2-\omega_\alpha^2(qv_\alpha-1)^2~,
\end{equation}
then $\|h_\alpha\|_{L^\infty}\leq C$ for all $\alpha$, where $C>0$ is independent of $\alpha$. We assume here that
\begin{equation}\label{maxu}
\max_M u_\alpha \to +\infty
\end{equation}
as $\alpha\to +\infty$. Then, a priori, see Corollary \ref{NonC0Conv}, the $h_\alpha$'s do not converge in $L^\infty$. We let $(x_\alpha)_\alpha$ 
be a sequence of points in $M$
and $(\rho_\alpha)_\alpha$ be a sequence of positive real numbers, with $0<\rho_\alpha<i_g/7$, where $i_g$ is the injectivity radius of $(M,g)$. We assume 
that the $x_\alpha$'s and $\rho_\alpha$'s satisfy
 \begin{equation}\label{hyp}
\begin{cases}
&\nabla u_\alpha(x_\alpha)=0 ~~\text{for all } \alpha,\\
&d_g(x_\alpha,x)^{\frac{n-2}{2}}u_\alpha(x) \leq C~~ \text{for all } x\in B_{x_\alpha}(7\rho_\alpha) \text{ and all } \alpha,\\
&\lim_{\alpha\to +\infty} \rho_\alpha^{\frac{n-2}{2}}\underset{B_{x_\alpha}(6\rho_\alpha)}{\sup}u_\alpha=+\infty.
\end{cases}
\end{equation}
We let $\mu_\alpha$ be given by
\begin{equation}\label{mu}
\mu_\alpha=u_\alpha(x_\alpha)^{-\frac{2}{n-2}}
\end{equation}
and let $u_0$ be given by
\begin{equation}\label{u0}
u_0(x)= \left(\frac{1}{1+\frac{|x|^2}{n(n-2)}}\right)^{\frac{n-2}{2}}
\end{equation} 
for all $x \in \mathbb{R}^n$. 
The $u_\alpha$'s satisfy the stationary Schr\"odinger equation
\begin{equation}\label{SchroSingleEqt}
\Delta_gu_\alpha + h_\alpha u_\alpha = u_\alpha^{2^\star-1}
\end{equation}
for all $\alpha$, where $h_\alpha$ is as in \eqref{h}, and by Lemma \ref{l1}, as already mentioned, there exists $C > 0$ such that 
$\Vert h_\alpha\Vert_{L^\infty} \le C$ for all $\alpha$. The $L^\infty$-bound on the potentials in \eqref{SchroSingleEqt} makes 
that we can apply the $C^0$-estimates proved in Chapter 6 of the book of Hebey \cite{HebeyBookZurich} (see Druet \cite{Dcompactness}, Druet, Hebey, Robert \cite{DHRBlowup}, Druet, Hebey and V\'etois \cite{JFA}, Li-Zhu \cite{lizhu}, and Li-Zhang \cite{LiZ,LiZ2,LiZYamabe2} for original references). In particular, 
Lemma \ref{l61} and Proposition \ref{p61} below hold true. First we state Lemma \ref{l61}. 

 \begin{lem}[See Hebey \cite{HebeyBookZurich}]\label{l61}
 Let $(M,g)$ be a closed Riemannian $n$-manifold, $n\geq 5$, and $((u_\alpha,v_\alpha))_\alpha$ be a sequence of smooth positive solutions of \eqref{kgstab}, such that \eqref{maxu} holds true. Let $(x_\alpha)_\alpha$ and $(\rho_\alpha)_\alpha$ be such that \eqref{hyp} holds true. After passing to a subsequence
 \begin{equation}\label{rhomu}
\frac{\rho_\alpha}{\mu_\alpha}\to +\infty \quad \text{and} \quad \mu_\alpha\to 0
\end{equation}
as $\alpha\to +\infty$, where $\mu_\alpha$ is given by \eqref{mu}, and
\begin{equation} \label{eql61}
 \mu_\alpha^{\frac{n-2}{2}} u_\alpha\left(\exp_{x_\alpha}(\mu_\alpha .)\right) \to u_0,
\end{equation} 
in $C^1_{loc}(\mathbb{R}^n)$ as $\alpha\to +\infty$, where $u_0$ is given by \eqref{u0}.
\end{lem}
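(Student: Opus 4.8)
The plan is to follow the standard rescaling-and-Liouville scheme adapted to the manifold setting, exactly as developed in Chapter 6 of Hebey \cite{HebeyBookZurich}. The key point is that the $u_\alpha$'s satisfy the single equation \eqref{SchroSingleEqt} with uniformly $L^\infty$-bounded potentials $h_\alpha$, so the coupling with $v_\alpha$ plays no role here beyond the bound $0 \le v_\alpha \le \frac{1}{q}$ which gives $\Vert h_\alpha\Vert_{L^\infty} \le C$. First I would show that $\rho_\alpha/\mu_\alpha \to +\infty$. Suppose not; then $\rho_\alpha = O(\mu_\alpha)$, so $\mu_\alpha/\rho_\alpha \ge c > 0$ along a subsequence. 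Combined with the second line of \eqref{hyp} applied at points realizing the supremum $\sup_{B_{x_\alpha}(6\rho_\alpha)}u_\alpha$, together with $\mu_\alpha = u_\alpha(x_\alpha)^{-2/(n-2)}$ and the definition of $\mu_\alpha$ as the scale at $x_\alpha$, one gets that $\rho_\alpha^{(n-2)/2}\sup_{B_{x_\alpha}(6\rho_\alpha)}u_\alpha$ stays bounded, contradicting the third line of \eqref{hyp}. (That $\mu_\alpha \to 0$ then follows since otherwise \eqref{maxu} would fail near $x_\alpha$, or more directly from the third line of \eqref{hyp} which forces blow-up at scale $\rho_\alpha$ hence at the finer scale $\mu_\alpha$.)

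Next I would perform the rescaling. Set $\tilde{u}_\alpha(x) = \mu_\alpha^{(n-2)/2}u_\alpha(\exp_{x_\alpha}(\mu_\alpha x))$ and $g_\alpha(x) = (\exp_{x_\alpha}^\star g)(\mu_\alpha x)$ on $B_0(\delta/\mu_\alpha)$ for small fixed $\delta$. Then $g_\alpha \to \xi$ in $C^2_{loc}(\mathbb{R}^n)$, and by \eqref{SchroSingleEqt}, $\tilde{u}_\alpha$ satisfies
\begin{equation*}
\Delta_{g_\alpha}\tilde{u}_\alpha + \mu_\alpha^2 h_\alpha(\exp_{x_\alpha}(\mu_\alpha \cdot))\,\tilde{u}_\alpha = \tilde{u}_\alpha^{2^\star-1}
\end{equation*}
on $B_0(\delta/\mu_\alpha)$. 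The potential term is $O(\mu_\alpha^2) \to 0$ in $L^\infty_{loc}$ since $\Vert h_\alpha\Vert_{L^\infty} \le C$. From $\nabla u_\alpha(x_\alpha) = 0$ we get $\nabla\tilde{u}_\alpha(0) = 0$, and $\tilde{u}_\alpha(0) = \mu_\alpha^{(n-2)/2}u_\alpha(x_\alpha) = 1$. The crucial local uniform bound comes from the second line of \eqref{hyp}: since $d_g(x_\alpha, \exp_{x_\alpha}(\mu_\alpha x)) = \mu_\alpha |x|(1+o(1))$, it gives $|x|^{(n-2)/2}\tilde{u}_\alpha(x) \le C$ on $B_0(R)$ for every fixed $R$, once $\mu_\alpha R < \rho_\alpha$ — which holds for $\alpha$ large since $\rho_\alpha/\mu_\alpha \to +\infty$. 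This furnishes a uniform $L^\infty$-bound on $\tilde{u}_\alpha$ on, say, $B_0(2R) \setminus B_0(1)$, and on $B_0(1)$ one uses that $\tilde{u}_\alpha \le$ some bound from a Harnack/De Giorgi–Nash–Moser argument on the rescaled equation. By standard elliptic estimates (Gilbarg–Trudinger \cite{Gilbarg}) the $\tilde{u}_\alpha$'s are bounded in $C^{1,\theta}_{loc}(\mathbb{R}^n)$, so after passing to a subsequence, $\tilde{u}_\alpha \to \tilde{u}$ in $C^1_{loc}(\mathbb{R}^n)$, where $\tilde{u} \ge 0$ solves $\Delta\tilde{u} = \tilde{u}^{2^\star-1}$ in $\mathbb{R}^n$ with $\tilde{u}(0) = 1$ and $\nabla\tilde{u}(0) = 0$.

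Finally, by the Caffarelli–Gidas–Spruck \cite{CaffarelliGidas} classification (invoked in the excerpt right after \eqref{bulle}), any nonnegative nontrivial $C^2$ solution of $\Delta\tilde{u} = \tilde{u}^{2^\star-1}$ is $\tilde{u}(x) = (1 + \lambda_n|x - x^*|^2)^{-(n-2)/2}$ up to translation; the normalizations $\tilde{u}(0) = 1$ and $\nabla\tilde{u}(0) = 0$ force $x^* = 0$ and $\lambda_n = 1/n(n-2)$, so $\tilde{u} = u_0$ with $u_0$ as in \eqref{u0}. This is exactly \eqref{eql61}. The main obstacle — though it is by now classical — is establishing the uniform local $L^\infty$-bound on $\tilde{u}_\alpha$ near the origin so that elliptic regularity applies and the blow-up limit is nontrivial rather than degenerate; this is handled by combining the pointwise weighted bound from \eqref{hyp} away from $0$ with a Moser iteration (using the critical exponent and the smallness of the local energy, a consequence of the finiteness of $\int_{B_0(R)}\tilde{u}_\alpha^{2^\star}$ which is bounded because $\int_M u_\alpha^{2^\star}$ is) to propagate the bound up to the center. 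Since all of this is contained in Hebey \cite{HebeyBookZurich}, Chapter 6, and in the references cited there (Druet \cite{Dcompactness}, Druet–Hebey–Robert \cite{DHRBlowup}, Li–Zhu \cite{lizhu}, Li–Zhang \cite{LiZ}), I would simply cite those sources for the details.
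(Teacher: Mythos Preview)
The paper does not prove this lemma at all: it simply states ``We refer to Hebey \cite{HebeyBookZurich} for the proofs of Lemma \ref{l61} and Proposition \ref{p61}.'' Your final sentence --- cite the book --- is therefore exactly the paper's proof, and your overall scheme (rescale, use $L^\infty$-bounded potentials, pass to a limit, apply Caffarelli--Gidas--Spruck) is the correct one from those references.

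That said, two steps in your sketch are not right as written and would fail if you tried to flesh them out directly. First, your contradiction argument for $\rho_\alpha/\mu_\alpha \to +\infty$ is incomplete: applying the second line of \eqref{hyp} at a point $y_\alpha$ realizing $\sup_{B_{x_\alpha}(6\rho_\alpha)} u_\alpha$ only gives $d_g(x_\alpha,y_\alpha)^{(n-2)/2} u_\alpha(y_\alpha) \le C$, and since $y_\alpha$ may coincide with (or be arbitrarily close to) $x_\alpha$, this does not bound $\rho_\alpha^{(n-2)/2} u_\alpha(y_\alpha)$. The actual argument in \cite{HebeyBookZurich} rescales first at the maximum point $y_\alpha$ with scale $\nu_\alpha = u_\alpha(y_\alpha)^{-2/(n-2)}$ (where one has the free bound $\hat u_\alpha \le 1$), obtains the bubble limit, and then uses $\nabla u_\alpha(x_\alpha)=0$ together with $d_g(x_\alpha,y_\alpha) = O(\nu_\alpha)$ to identify $x_\alpha$ with the bubble center, whence $\mu_\alpha \sim \nu_\alpha$ and $\rho_\alpha/\mu_\alpha \to +\infty$.

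Second, and more seriously, your proposed cure for the $L^\infty$-bound on $\tilde u_\alpha$ near the origin invokes ``finiteness of $\int_{B_0(R)} \tilde u_\alpha^{2^\star}$ \dots\ because $\int_M u_\alpha^{2^\star}$ is bounded''. But in this paper there is \emph{no} a priori $H^1$ or energy bound on $(u_\alpha)_\alpha$; Section \ref{APriBdCritCase} says explicitly that the sequence is not bounded in $H^1$ and that one cannot apply the $H^1$-theory. So Moser iteration with small energy is unavailable here. The bound near the origin comes instead from the max-point rescaling described above, which needs no global energy control. With these two corrections, your sketch aligns with the argument in \cite{HebeyBookZurich} that the paper cites.
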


In order to state Proposition \ref{p61} we need to introduce the range $r_\alpha$ attached to the $u_\alpha$'s. 
We define $\varphi_\alpha : [0,\rho_\alpha) \to \mathbb{R}_+$ by
\begin{equation}\label{eq38}
\varphi_\alpha(r) =\frac{1}{|\partial B_{x_\alpha}(r)|} \int_{\partial B_{x_\alpha}(r)} u_\alpha d\sigma_g
\end{equation}
where $|\partial B_{x_\alpha}(r)|$ is the volume of the geodesic sphere of center $x_\alpha$. As a consequence of Lemma \ref{l61}, we have that
\begin{equation}\label{eq39}
(\mu_\alpha r)^{\frac{n-2}{2}} \varphi(\mu_\alpha r) \to \left(\frac{r}{1+\frac{r^2}{n(n-2)}} \right)^{\frac{n-2}{2}}
\end{equation}
in $C^1_{loc}(\mathbb{R}_+)$ as $\alpha\to +\infty$. Then we define $r_\alpha\in[2R_0 \mu_\alpha, \rho_\alpha]$ by 
\begin{equation}\label{eq40}
r_\alpha=\sup\left\{r\in [2R_0 \mu_\alpha, \rho_\alpha]\quad s.t. \left(s^{\frac{n-2}{2}} \varphi_\alpha(s) \right)'\leq 0  \text{ in }[2R_0 \mu_\alpha, r]\right\},
\end{equation}
where $R_0^2=n(n-2)$. Thanks to \eqref{eq39}, we have that 
\begin{equation}\label{eq41}
\frac{r_\alpha}{\mu_\alpha} \to +\infty
\end{equation}
as $\alpha\to +\infty$, while the definition of $r_\alpha$ gives that
\begin{equation}\label{eq42}
r^{\frac{n-2}{2}}\varphi_\alpha \text{  is non-increasing in } [2R_0\mu_\alpha,r_\alpha]
\end{equation}
and that
\begin{equation}\label{eq43}
\left(r^{\frac{n-2}{2}}\varphi_\alpha(r)\right)'(r_\alpha)=0 \text{ if }r_\alpha<\rho_\alpha.
\end{equation}
Proposition \ref{p61} 
gives sharp pointwise estimates on the $u_\alpha$'s at a distance like $r_\alpha$ of $x_\alpha$ (an infinitesimal version of the $C^0$-estimates in Druet, Hebey and 
Robert \cite{DHRBlowup}).

\begin{prop}[See Hebey \cite{HebeyBookZurich}]\label{p61}
 Let $(M,g)$ be a closed Riemannian $n$-manifold, $n\geq 5$, and $((u_\alpha,v_\alpha))_\alpha$ be a sequence of smooth positive solutions of \eqref{kgstab}, such that \eqref{maxu} holds true. Let $(x_\alpha)_\alpha$ and $(\rho_\alpha)_\alpha$ be such that \eqref{hyp} holds true. Let $R>0$ be such that $Rr_\alpha \leq 6 \rho_\alpha$ for all $\alpha\gg 1$. There exist $C>0$ and $(\varepsilon_\alpha)_\alpha$ a sequence of positive real numbers such that, after passing to a subsequence,
\begin{equation}\label{eqp61}
\begin{split}
&u_\alpha(x) +d_g(x_\alpha,x)|\nabla u_\alpha(x)|\leq C\mu_\alpha^{\frac{n-2}{2}} d_g(x_\alpha,x)^{2-n}, \\
&|u_\alpha(x)-B_\alpha(x)|\leq C\mu_\alpha^{\frac{n-2}{2}}\left(r_\alpha^{2-n}+d_g(x_\alpha,x)^{3-n} \right)+\varepsilon_\alpha B_\alpha(x),
\end{split}
\end{equation}
for all $x\in B_{x_\alpha}\left(\frac{R}{2}r_\alpha\right)\backslash\{x_\alpha\}$ 
and all $\alpha$, where $\mu_\alpha$ is as in \eqref{mu}, $r_\alpha$ as in \eqref{eq40}, $(B_\alpha)_\alpha$ as in 
\eqref{bulle}, and $\varepsilon_\alpha\to 0$ as $\alpha\to +\infty$.
\end{prop}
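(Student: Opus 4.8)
The statement to prove is Proposition~\ref{p61}, which asserts the standard pointwise $C^0$-estimates on a blowing-up sequence of solutions of the Schr\"odinger-type equation \eqref{SchroSingleEqt}. Since the proposition is explicitly attributed to Chapter~6 of Hebey \cite{HebeyBookZurich} (with the string of original references), the plan is not to reprove the estimate from scratch but to \emph{reduce} our situation to the hypotheses under which those results are established, and then invoke them. The essential observation is that, although the coupled system \eqref{kgstab} is not of the scalar Yamabe type, the first equation can be rewritten in the form \eqref{SchroSingleEqt} with potential $h_\alpha$ given by \eqref{h}, and by Lemma~\ref{l1} we have $0\le v_\alpha\le 1/q$, so that $\|h_\alpha\|_{L^\infty}\le C$ uniformly in $\alpha$. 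This uniform $L^\infty$-bound on the potential is precisely the only input the blow-up machinery in Chapter~6 of \cite{HebeyBookZurich} requires; no convergence of $h_\alpha$ (which, as Corollary~\ref{NonC0Conv} shows, may fail) is needed.

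Concretely, I would organize the argument as follows. First, record that \eqref{SchroSingleEqt} holds with $h_\alpha$ bounded in $L^\infty$, so the $u_\alpha$'s form a bounded-energy blow-up sequence for a critical scalar equation with bounded potentials. Second, note that the concentration point data $(x_\alpha)_\alpha$, $(\rho_\alpha)_\alpha$ satisfy the structural hypotheses \eqref{hyp}, which are exactly the localization assumptions (critical point of $u_\alpha$ at $x_\alpha$, the weak pointwise control $d_g(x_\alpha,x)^{(n-2)/2}u_\alpha(x)\le C$ on $B_{x_\alpha}(7\rho_\alpha)$, and the unboundedness of $\rho_\alpha^{(n-2)/2}\sup_{B_{x_\alpha}(6\rho_\alpha)}u_\alpha$) under which Druet--Hebey--Robert \cite{DHRBlowup} and Chapter~6 of \cite{HebeyBookZurich} run their analysis. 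Third, invoke Lemma~\ref{l61} to get the rescaled convergence \eqref{eql61} to the standard bubble $u_0$, together with $\rho_\alpha/\mu_\alpha\to+\infty$ and $\mu_\alpha\to0$. Fourth, introduce the spherical average $\varphi_\alpha$ of \eqref{eq38} and the range $r_\alpha$ of \eqref{eq40}, and use \eqref{eq39} to verify \eqref{eq41}, while \eqref{eq42}--\eqref{eq43} are immediate from the definition of $r_\alpha$ as a supremum. Fifth, apply the $C^0$-theory of \cite{HebeyBookZurich} on the ball $B_{x_\alpha}(\frac R2 r_\alpha)$ (legitimate since $Rr_\alpha\le 6\rho_\alpha$ keeps us inside the controlled region $B_{x_\alpha}(7\rho_\alpha)$ of \eqref{hyp}): this yields the gradient-weighted upper bound in the first line of \eqref{eqp61} and the sharp comparison with the bubble $B_\alpha$ in the second line, with the error term $\varepsilon_\alpha B_\alpha$, $\varepsilon_\alpha\to0$.

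The one point that genuinely needs care, and which I would write out, is the verification that the presence of the extra term $\omega_\alpha^2(1-qv_\alpha)^2 u_\alpha$ in \eqref{kgstab} does not spoil the applicability of the scalar $C^0$-estimates. This is where the coupling enters: $v_\alpha=\Phi(u_\alpha)$ (or at least $v_\alpha$ solving the second equation) depends on $u_\alpha$, so $h_\alpha$ is not a fixed potential. But since $v_\alpha$ enters only through the bounded factor $(1-qv_\alpha)^2\in[0,1]$, the potential $h_\alpha$ stays in a fixed ball of $L^\infty$, and the blow-up estimates of \cite{HebeyBookZurich} are uniform over such a ball — they are stated precisely for sequences of equations $\Delta_g u_\alpha+h_\alpha u_\alpha=u_\alpha^{2^\star-1}$ with $\|h_\alpha\|_{L^\infty}\le\Lambda$. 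So the reduction is clean and the proposition follows by direct citation once this uniformity is pointed out.

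The main obstacle, then, is not a mathematical difficulty but a bookkeeping one: making explicit that the hypotheses of the cited theorems are met in the present coupled setting, i.e. that the system \eqref{kgstab} reduces to a bounded-potential scalar problem \eqref{SchroSingleEqt} to which Lemma~\ref{l61} and the $C^0$-estimates apply verbatim. The proof is therefore short: restate \eqref{SchroSingleEqt} with $\|h_\alpha\|_{L^\infty}\le C$, check \eqref{hyp} and the definition of $r_\alpha$, and quote \cite{HebeyBookZurich}, with the remark that no control on the oscillation of $h_\alpha$ is needed.
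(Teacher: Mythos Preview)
Your proposal is correct and matches the paper's own treatment exactly: the paper does not give an independent proof of Proposition~\ref{p61} but observes (just before Lemma~\ref{l61}) that the $L^\infty$-bound on $h_\alpha$ coming from $0\le v_\alpha\le 1/q$ is all that is needed to invoke the $C^0$-estimates of Chapter~6 in Hebey~\cite{HebeyBookZurich}, and then simply refers to that book for the proof. Your write-up is somewhat more detailed in spelling out how the hypotheses~\eqref{hyp} and the definition of $r_\alpha$ fit into the cited framework, but the reduction and the citation are the same.
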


We refer to Hebey \cite{HebeyBookZurich} for the proofs of Lemma \ref{l61} and Proposition \ref{p61}. Now we aim in applying 
to our situation the classical scheme for stability, as developed in Druet \cite{Dcompactness} (see also Hebey \cite{HebeyBookZurich}), but face the serious difficulty that, contrary to what is a priori required 
by this scheme, and as discussed in Section \ref{NSCLPC}, we 
do not have any $C^1$-control on the $v_\alpha$'s, and thus on the $h_\alpha$'s in \eqref{SchroSingleEqt}. The loss of sufficiently smooth control 
may of course reverse the stability issue and transform an a priori stable situation into an unstable situation 
(see Druet and Laurain \cite{DruetLaurain} or Druet, Hebey and Laurain \cite{DHL} for results in this direction). We settle this loss of control on the derivatives in the next section thanks to 
the very special form of the equations in \eqref{kg} and the idea developed in Lemma \ref{controlv2}.

 \section{Blow-up theory in the critical case - Sharp asymptotics}\label{BlUpThrySharpAsy}

 Once more we let $(M,g)$ be a closed Riemannian $n$-manifold, $n\geq 5$, $m_0,m_1, q>0$, and $(\omega_\alpha)_\alpha$ be a 
 sequence in $(-m_0,m_0)$ such that $\omega_\alpha\to \omega$ as $\alpha\to +\infty$ for some $\omega\in [-m_0,m_0]$. Also, we let 
 $((u_\alpha,v_\alpha))_\alpha$ be a sequence of smooth positive functions satisfying \eqref{kgstab} for all $\alpha$, and satisfying \eqref{maxu}. At last 
 we let the $x_\alpha$'s and $\rho_\alpha$'s be points in $M$ and positive real numbers which satisfy \eqref{hyp}. We continue the a priori 
 asymptotic analysis of the blowing-up sequence $(u_\alpha)_\alpha$ around the theoretical concentration points $x_\alpha$ we 
 initiated in the preceding section. We let $\mu_\alpha$ be given by \eqref{mu} and define $\hat{v}_\alpha$ to be such that
 $$\hat{v}_\alpha(x)=v_\alpha(\exp_{x_\alpha}(\mu_\alpha x))$$
 for $x\in B_0(\rho_\alpha/\mu_\alpha)\subset \mathbb{R}^n$. By Lemma \ref{l61}, we can apply Lemma \ref{controlv2}. In particular we get that the following 
 result holds true.
 
  \begin{lem}\label{controlv}
 For any $p\in[1,+\infty[$,
 \begin{equation}\label{eqcontrolv}
 \hat{v}_\alpha\to \frac{1}{q} ~~\text{in}~L^p_{loc}(\mathbb{R}^n)
 \end{equation}
 and a.e. as $\alpha \to +\infty$.
 \end{lem}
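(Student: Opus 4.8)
The plan is to reduce Lemma \ref{controlv} directly to Lemma \ref{controlv2} by checking that its hypotheses are met for the present blowing-up sequence. The objects are set up so that this is essentially immediate: we take $v_\alpha = \Phi(u_\alpha)$ in the sense of \eqref{eqphi}, the $\mu_\alpha$'s converge to zero by \eqref{rhomu} in Lemma \ref{l61}, and the $x_\alpha$'s are a converging sequence of points in $M$ (passing to a subsequence if necessary, which is harmless since the conclusion is a convergence statement and we are already working up to subsequences). The one substantive hypothesis of Lemma \ref{controlv2} to verify is the $C^0_{loc}$-convergence \eqref{eql612} of the rescaled $u_\alpha$'s to a positive $C^1$ limit.

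For that, I would invoke Lemma \ref{l61}: \eqref{eql61} gives that $\mu_\alpha^{(n-2)/2} u_\alpha(\exp_{x_\alpha}(\mu_\alpha\, \cdot))$ converges to $u_0$ in $C^1_{loc}(\mathbb{R}^n)$, hence in particular in $C^0_{loc}(\mathbb{R}^n)$, and $u_0$ given by \eqref{u0} is a positive $C^1$ (indeed smooth) function on $\mathbb{R}^n$. This is exactly \eqref{eql612} with $\tilde u_0 = u_0$. Therefore Lemma \ref{controlv2} applies verbatim and yields that the function $\hat v_\alpha(x) = v_\alpha(\exp_{x_\alpha}(\mu_\alpha x))$ — which is precisely the $\hat v_\alpha$ defined just before the statement of Lemma \ref{controlv} — converges to $1/q$ in $L^p_{loc}(\mathbb{R}^n)$ for all $p \in [1,+\infty)$ and almost everywhere, as $\alpha \to +\infty$. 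This is the asserted conclusion \eqref{eqcontrolv}.

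There is no real obstacle here: the lemma is a packaging statement, recording for later use in the blow-up analysis the one consequence of Section \ref{NSCLPC} that survives the loss of $C^1$-control on the $v_\alpha$'s. If anything, the only point requiring a word of care is that Lemma \ref{controlv2} was stated for sequences with $v_\alpha = \Phi(u_\alpha)$ and a converging sequence of centers; both are in force here (the former by hypothesis on the system, the latter after extracting a subsequence), and the weights $\mu_\alpha$ from \eqref{mu} do tend to $0$ by \eqref{rhomu}, so all the ingredients line up. Accordingly the proof is one line: apply Lemma \ref{controlv2}, using \eqref{eql61} from Lemma \ref{l61} to supply hypothesis \eqref{eql612}.

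\begin{proof}[Proof of Lemma \ref{controlv}] By assumption $v_\alpha = \Phi(u_\alpha)$ with $\Phi$ as in \eqref{eqphi}, the sequence $(\mu_\alpha)_\alpha$ given by \eqref{mu} satisfies $\mu_\alpha \to 0$ by \eqref{rhomu}, and, up to passing to a subsequence, $(x_\alpha)_\alpha$ is a converging sequence of points in $M$. By \eqref{eql61} in Lemma \ref{l61}, $\mu_\alpha^{(n-2)/2} u_\alpha(\exp_{x_\alpha}(\mu_\alpha\, \cdot)) \to u_0$ in $C^1_{loc}(\mathbb{R}^n)$, hence in $C^0_{loc}(\mathbb{R}^n)$, as $\alpha \to +\infty$, where $u_0$, given by \eqref{u0}, is a positive smooth function in $\mathbb{R}^n$. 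Thus the hypothesis \eqref{eql612} of Lemma \ref{controlv2} holds with $\tilde u_0 = u_0$, and Lemma \ref{controlv2} applies. Since $\hat v_\alpha(x) = v_\alpha(\exp_{x_\alpha}(\mu_\alpha x))$, its conclusion \eqref{eqcontrolv2} is precisely \eqref{eqcontrolv}. This ends the proof of Lemma \ref{controlv}.
\end{proof}
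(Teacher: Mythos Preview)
Your proof is correct and follows exactly the same approach as the paper: the paper's argument is literally the one-line observation that by Lemma \ref{l61} (which supplies \eqref{eql61}), Lemma \ref{controlv2} applies, giving the conclusion. Your proposal spells out the verification of the hypotheses of Lemma \ref{controlv2} (that $v_\alpha=\Phi(u_\alpha)$, that $\mu_\alpha\to 0$, that the centers converge up to a subsequence, and that the rescaled limit $u_0$ is positive and $C^1$), which is precisely what is implicit in the paper's terse statement.
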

 
 Now  we aim to get asymptotic formulas for the $u_\alpha$'s at the scale of the $r_\alpha$'s. Here we use assumption \eqref{prel1} for the first time. The following proposition 
 is the key result of this section.
 
 \begin{prop}\label{p64}
 Let $(M,g)$ be a closed Riemannian $n$-manifold, $n\geq 5$, and $((u_\alpha,v_\alpha))_\alpha$ be a sequence of smooth positive solutions of \eqref{kgstab} 
 such that \eqref{maxu} holds true. We assume \eqref{prel1} holds true everywhere in $M$. 
 Let $(x_\alpha)_\alpha$ and $(\rho_\alpha)_\alpha$ be such that \eqref{hyp} holds true. Then $r_\alpha\to 0$ as $\alpha\to +\infty$,
 \begin{equation}\label{eqp642}
 r_\alpha= \rho_\alpha
 \end{equation}
 for all $\alpha$, and 
 \begin{equation}\label{eqp64}
 r_\alpha^{n-2}\mu_\alpha^{-\frac{n-2}{2}}u_\alpha\left(\exp_{x_\alpha}(r_\alpha x) \right)\to \frac{(n(n-2))^{\frac{n-2}{2}}}{|x|^{n-2}}+\mathcal{H}(x)
 \end{equation}
 in $C^1_{loc}\left(B_0(2)\backslash \{0\}\right)$ as $\alpha\to +\infty$, 
 where $\mu_\alpha$ is as in \eqref{mu}, $r_\alpha$ as in \eqref{eq40}, and $\mathcal{H}$ is a harmonic function in $B_0(2)$ which satisfies that $\mathcal{H}(0)\leq 0$.
\end{prop}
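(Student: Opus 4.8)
The plan is to follow the classical Druet-type blow-up scheme for the Schr\"odinger equation \eqref{SchroSingleEqt}, but carefully track how the lack of $C^1$-control on the $v_\alpha$'s (hence on the $h_\alpha$'s) is compensated by Lemma \ref{controlv}, which tells us that after rescaling the potential $h_\alpha$ converges not to $m_0^2$ but to $m_0^2 - 0 = m_0^2$ on any fixed compact set... more precisely that $\omega_\alpha^2(qv_\alpha-1)^2 \to 0$ at the scale of $\mu_\alpha$, so $h_\alpha \to m_0^2$ there. First I would introduce the rescaled functions $w_\alpha(x) = r_\alpha^{(n-2)/2}\mu_\alpha^{-(n-2)/2} u_\alpha(\exp_{x_\alpha}(r_\alpha x))$ on $B_0(\delta/r_\alpha)$ together with the pulled-back metrics, and write down the equation they satisfy; the weight $r_\alpha^{(n-2)/2}$ is chosen exactly so that $\varphi_\alpha$, the spherical average, is normalized to order one at radius $r_\alpha$ by \eqref{eq42}-\eqref{eq43} and the definition of $r_\alpha$.

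Next I would use Proposition \ref{p61} to get the pointwise bound $w_\alpha(x) \le C|x|^{2-n}$ on $B_0(2)\setminus\{0\}$ (away from the origin the $r_\alpha^{2-n}$ and $d_g^{3-n}$ terms are lower order once divided appropriately, and $\varepsilon_\alpha B_\alpha$ is absorbed), so that $(w_\alpha)$ is locally bounded in $C^0$ on $B_0(2)\setminus\{0\}$. The equation for $w_\alpha$ has the schematic form $\Delta_{g_\alpha} w_\alpha + r_\alpha^2 h_\alpha(\exp_{x_\alpha}(r_\alpha\cdot)) w_\alpha = (\mu_\alpha/r_\alpha)^{2} w_\alpha^{2^\star-1}$ — here the right-hand side carries the factor $(\mu_\alpha/r_\alpha)^{2}\to 0$ because $r_\alpha/\mu_\alpha \to +\infty$ by \eqref{eq41}, and $r_\alpha^2 h_\alpha \to 0$ in $L^\infty$ since $\|h_\alpha\|_\infty$ is bounded and $r_\alpha\to 0$ (the fact that $r_\alpha\to 0$ itself needs a separate short argument: if $r_\alpha$ stayed bounded below one extracts a nontrivial global solution on $M$ or gets a contradiction with the $H^1$-bound / energy quantization, or one argues $r_\alpha\le\rho_\alpha<i_g/7$ and a concentration-compactness dichotomy forces $r_\alpha\to0$). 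Consequently elliptic estimates give that, up to a subsequence, $w_\alpha \to w$ in $C^1_{loc}(B_0(2)\setminus\{0\})$ where $w$ is a nonnegative \emph{harmonic} function on $B_0(2)\setminus\{0\}$, bounded by $C|x|^{2-n}$.

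By the Bôcher-type removable singularity theorem, such a $w$ must be of the form $w(x) = \lambda|x|^{2-n} + \mathcal{H}(x)$ with $\mathcal{H}$ harmonic on all of $B_0(2)$ and $\lambda \ge 0$. The remaining work is to pin down the constant: I would integrate the equation for $w_\alpha$ over spheres, or rather use the convergence \eqref{eq39} of $(\mu_\alpha r)^{(n-2)/2}\varphi_\alpha(\mu_\alpha r)$ to the standard bubble profile, rescaled to the $r_\alpha$-scale, to identify $\lambda = (n(n-2))^{(n-2)/2}$; the normalization in \eqref{eq38}-\eqref{eq40} is tailored for this. Then \eqref{eq43}, i.e. $\big(r^{(n-2)/2}\varphi_\alpha(r)\big)'(r_\alpha)=0$ when $r_\alpha<\rho_\alpha$, passes to the limit and forces the corresponding derivative of the radial average of $w$ to vanish at $|x|=1$; combined with $w = \lambda|x|^{2-n}+\mathcal{H}$ this gives a relation on $\mathcal{H}(0)$ — concretely it yields $\mathcal{H}(0)\le 0$ (the sign being exactly what \eqref{prel1} everywhere, entering through the Pohozaev identity controlling the sign of the boundary derivative term, guarantees). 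The case $r_\alpha=\rho_\alpha$ for all $\alpha$ (eq.~\eqref{eqp642}) is then obtained by ruling out $r_\alpha<\rho_\alpha$: if $r_\alpha<\rho_\alpha$ infinitely often, the vanishing derivative plus $\mathcal{H}(0)\le 0$ would be compatible, so instead the argument for \eqref{eqp642} must come from a Pohozaev identity on $B_{x_\alpha}(r_\alpha)$ showing that a genuine critical radius strictly inside would force $\mathcal{H}(0)>0$ under \eqref{prel1}, a contradiction — this is where assumption \eqref{prel1} is used for the first time, as the text announces.

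\medskip\noindent\emph{Main obstacle.} The hard part will be the Pohozaev-type computation that produces the sign $\mathcal{H}(0)\le 0$ and simultaneously forces $r_\alpha=\rho_\alpha$, because the potential term $h_\alpha u_\alpha$ contributes a term involving $\int_{B_{x_\alpha}(r_\alpha)} h_\alpha u_\alpha^2$-type quantities whose sign is governed by \eqref{prel1} only after one knows the blow-up is a single standard bubble at the finest scale — and here the absence of $C^1$-control on $h_\alpha$ means the usual Pohozaev boundary terms involving $\nabla h_\alpha$ cannot be handled directly; they must be rerouted through the $L^p_{loc}$-convergence $\hat v_\alpha \to 1/q$ of Lemma \ref{controlv} (so that $h_\alpha \to m_0^2$ strongly enough in an integral sense near the concentration point, while far from it the bound $0\le v_\alpha\le 1/q$ suffices), which is precisely the mechanism flagged at the end of Section \ref{BlUpThryPtCtr}.
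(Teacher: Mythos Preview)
Your outline follows the paper's scheme (rescale, bound via Proposition~\ref{p61}, harmonic limit via B\^ocher, Pohozaev to fix the sign of $\mathcal{H}(0)$, then \eqref{eq43} to force $r_\alpha=\rho_\alpha$), but there is one genuine gap and one misidentification of the key difficulty.

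\medskip\noindent\textbf{The gap: how $r_\alpha\to 0$ is obtained.} Your suggested routes (``extract a nontrivial global solution'', ``contradiction with the $H^1$-bound / energy quantization'', ``concentration--compactness dichotomy'') do not work here: no $H^1$-bound on $(u_\alpha)_\alpha$ is assumed in this section, and no soft argument of this type gives $r_\alpha\to 0$. In the paper the order is reversed from yours: the Pohozaev identity on $B_{x_\alpha}(r_\alpha)$ is applied \emph{first}, and Lemmas~\ref{crucial} and~\ref{l64} (which do not require $r_\alpha\to 0$) yield
\[
Q_{1,\alpha}=C_n\Big(\tfrac{n-2}{4(n-1)}S_g(\bar x_0)-m_0^2\Big)\mu_\alpha^2+o(\mu_\alpha^2)+o(\mu_\alpha^{n-2}r_\alpha^{2-n}),
\]
while the rough boundary estimate from \eqref{eqp61} gives $Q_{1,\alpha}=O(\mu_\alpha^{n-2}r_\alpha^{2-n})$. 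Since $n\ge 5$ and the coefficient is strictly positive by \eqref{prel1}, these two relations force $\mu_\alpha^2=O(\mu_\alpha^{n-2}r_\alpha^{2-n})$, i.e.\ $r_\alpha^{n-2}=O(\mu_\alpha^{n-4})\to 0$. Only \emph{after} this does one extract the harmonic limit and refine $Q_{1,\alpha}$ to \eqref{eq116} to conclude $\mathcal{H}(0)\le 0$. So \eqref{prel1} is used twice (for $r_\alpha\to 0$ and for the sign of $\mathcal{H}(0)$), not once as your sketch suggests.

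\medskip\noindent\textbf{The misplaced difficulty.} The Pohozaev identity used here (see \eqref{eq112}) contains no $\nabla h_\alpha$ term; the potential enters only through the volume integral $\int_{B_{x_\alpha}(r_\alpha)}h_\alpha\Psi_\alpha\,dv_g$ with $\Psi_\alpha=u_\alpha X_\alpha(\nabla u_\alpha)+\tfrac{n-2}{2n}(\mathrm{div}_gX_\alpha)u_\alpha^2$. Splitting $h_\alpha=m_0^2-\omega_\alpha^2(1-qv_\alpha)^2$, the $m_0^2$-part is handled by \eqref{eqcrucial2}, and the dangerous piece is $\tilde{\mathcal{R}}_\alpha=\int_{B_{x_\alpha}(r_\alpha)}(qv_\alpha-1)^2\Psi_\alpha\,dv_g$. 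This is exactly where Lemma~\ref{controlv} enters: after rescaling by $\mu_\alpha$, the factor $(q\hat v_\alpha-1)^2\to 0$ in $L^p_{loc}$, and combined with the pointwise bound $|\hat\Psi_\alpha|\le C|x|^{4-2n}$ from \eqref{eqp61} (integrable on $\mathbb{R}^n$ since $n\ge 5$), dominated convergence gives $\tilde{\mathcal{R}}_\alpha=o(\mu_\alpha^2)$. There is no boundary term in $\nabla h_\alpha$ to reroute.

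\medskip\noindent\textbf{Minor.} Your rescaling $w_\alpha=r_\alpha^{(n-2)/2}\mu_\alpha^{-(n-2)/2}u_\alpha(\exp_{x_\alpha}(r_\alpha\cdot))$ blows up (by \eqref{eqp61} it behaves like $r_\alpha^{-(n-2)/2}|x|^{2-n}$); the correct prefactor is $r_\alpha^{\,n-2}$ as in \eqref{eqp64}, which is also what your stated equation with $(\mu_\alpha/r_\alpha)^2$ on the right actually corresponds to. Finally, in the last step your logic is slightly inverted: if $r_\alpha<\rho_\alpha$, \eqref{eq43} passes to the limit and gives $\mathcal{H}(0)=(n(n-2))^{(n-2)/2}>0$, contradicting $\mathcal{H}(0)\le 0$; that is the contradiction, not a further Pohozaev argument.
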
 

In order to prove Proposition \ref{p64}, we let $X_\alpha$ be the $1$-form given by
\begin{equation}\label{eqX}
X_\alpha(x)=\left(1-\frac{1}{6(n-1)}Rc_g^{\sharp}(x)(\nabla f_\alpha(x),\nabla f_\alpha (x)) \right) \nabla f_\alpha(x)
\end{equation}
 for $x\in M$, where $f_\alpha(x)=\frac{1}{2}d_g(x_\alpha,x)^2$, $Rc_g$ is the Ricci curvature tensor of $g$, and $\sharp$ is the musical isomorphism. 
 As is easily checked, the following estimates hold true 
 \begin{equation}\label{estimX}
 \begin{split}
& |X_\alpha(x) |=O(d_g(x_\alpha,x)),\\
 &\text{div}_g X_\alpha(x)=n+O(d_g(x_\alpha,x)^2),\\ 
 &\Delta_g(\text{div}_g X_\alpha)(x)=\frac{n}{n-1}S_g(x)+O(d_g(x_\alpha,x)).
 \end{split}
 \end{equation}
We define
 \begin{equation}\label{eq11}
  \tilde{\mathcal{R}}_{\alpha}= \int_{B_{x_\alpha}(r_\alpha)} (qv_\alpha-1)^2 \left\{u_\alpha X_\alpha(\nabla u_\alpha) +\frac{n-2}{2n}(\text{div}_g X_\alpha) u_\alpha^2\right\} dv_g,
  \end{equation}
  where, for a $1$-form $X$ and a smooth function $u$, $X(\nabla u) = (X,\nabla u) = g^{ij}X_i\nabla_ju$ in local coordinates. 
 The Riemannian Pohozaev identity given in Hebey \cite{HebeyBookZurich}, when applied to $u_\alpha$ in $B_{x_\alpha}(r_\alpha)$, gives that
\begin{equation}\label{eq112}
\begin{split}
&m_0^2\int_{B_{x_\alpha}(r_\alpha)} \left\{u_\alpha X_\alpha(\nabla u_\alpha) +\frac{n-2}{2n}(\text{div}_g X_\alpha) u_\alpha^2\right\} dv_g \\
&+\frac{n-2}{4n}  \int_{B_{x_\alpha}(r_\alpha)}\left(\Delta_g\text{div}_g X_\alpha \right) u_\alpha^2 dv_g  = \tilde{\mathcal{R}}_\alpha+Q_{1,\alpha}-Q_{2,\alpha}+Q_{3,\alpha},
\end{split}
\end{equation}
where $\tilde{\mathcal{R}}_\alpha$ is as in \eqref{eq11},
\begin{equation}\label{DefQIALpha}
\begin{split}
Q_{1,\alpha}=&\frac{n-2}{2n}\int_{\partial B_{x_\alpha}(r_\alpha)} (\text{div}_g X_\alpha)(\partial_\nu u_\alpha) u_\alpha d\sigma_g\\
&-\int_{\partial B_{x_\alpha}(r_\alpha)} \left(\frac{1}{2}X_\alpha(\nu) |\nabla u_\alpha|^2 - X_\alpha(\nabla u_\alpha) \partial_\nu u_\alpha \right) d\sigma_g,\\
Q_{2,\alpha}= & \int_{B_{x_\alpha}(r_\alpha)}\left(\nabla X_\alpha-\frac{1}{n}(\text{div}_g X_\alpha)g \right)^\sharp(\nabla u_\alpha, \nabla u_\alpha) dv_g,\\
Q_{3,\alpha}=& \frac{n-2}{2n} \int_{\partial B_{x_\alpha}(r_\alpha)} X_\alpha(\nu) u_\alpha^{2^\star} d\sigma_g\\
&-\frac{n-2}{4n}\int_{\partial B_{x_\alpha}(r_\alpha)}\left(\partial_\nu (\text{div}_g X_\alpha) \right) u_\alpha^2 d\sigma_g,
\end{split}
\end{equation}
and, in the above expressions, $\nu$ is the unit outward normal to $\partial B_{x_\alpha}(r_\alpha)$. We need two intermediate lemmas before proving Proposition \ref{p64}. The 
first lemma is as follows.

 \begin{lem}\label{crucial}
 Let $(M,g)$ be a closed Riemannian $n$-manifold, $n\geq 5$, and $((u_\alpha,v_\alpha))_\alpha$ be a sequence of smooth positive solutions of \eqref{kgstab} 
 such that \eqref{maxu} holds true. Let $(x_\alpha)_\alpha$ and $(\rho_\alpha)_\alpha$ be such that \eqref{hyp} holds true.  Let $r_\alpha$ be as in \eqref{eq40}, $\mu_\alpha$ as in \eqref{mu}, $X_\alpha$ as in \eqref{eqX}, and $\bar{x}_0$ be such that, 
 up to a subsequence, $x_\alpha\to \bar{x}_0$ as $\alpha\to +\infty$. Let $\tilde{\mathcal{R}}_{\alpha}$ be as in \eqref{eq11}. 
Then there holds that 
 \begin{equation}\label{eqcrucial}
 \tilde{\mathcal{R}}_{\alpha}=o(\mu_\alpha^2)
  \end{equation}
 and we also have that
\begin{equation}\label{eqcrucial2}
\begin{split}
& \int_{B_{x_\alpha}(r_\alpha)} \left\{u_\alpha X_\alpha(\nabla u_\alpha) +\frac{n-2}{2n}(\text{div}_g X_\alpha) u_\alpha^2\right\} dv_g = \mu_\alpha^{2}\left(-C_n+o(1) \right),\\
& \int_{B_{x_\alpha}(r_\alpha)}\left(\Delta_g\text{div}_g X_\alpha \right) u_\alpha^2 dv_g = \mu_\alpha^{2} \left(\frac{n S_g(\bar{x}_0)}{n-1}C_n+o(1)\right),
\end{split}
\end{equation}   
where $C_n=\int_{\mathbb{R}^n} u_0^2 dx$, $u_0$ is as in \eqref{u0}, and $S_g$ is the scalar curvature of $g$.
\end{lem}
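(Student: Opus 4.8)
The plan is to prove the three estimates of Lemma \ref{crucial} essentially simultaneously, because they all rest on the same rescaling at the scale $\mu_\alpha$ together with the pointwise bounds of Proposition \ref{p61}. First I would split each integral over $B_{x_\alpha}(r_\alpha)$ into the \emph{core} $B_{x_\alpha}(R\mu_\alpha)$ and the \emph{neck} $B_{x_\alpha}(r_\alpha)\setminus B_{x_\alpha}(R\mu_\alpha)$, for a large fixed $R$ to be sent to infinity at the end. On the core, one rescales by setting $x=\exp_{x_\alpha}(\mu_\alpha y)$; by Lemma \ref{l61}, $\mu_\alpha^{(n-2)/2}u_\alpha(\exp_{x_\alpha}(\mu_\alpha\cdot))\to u_0$ in $C^1_{loc}$, and by the estimates \eqref{estimX} one has $X_\alpha(\exp_{x_\alpha}(\mu_\alpha y))=\mu_\alpha y + O(\mu_\alpha^3|y|^3)$ and $\mathrm{div}_gX_\alpha=n+O(\mu_\alpha^2|y|^2)$, $\Delta_g\mathrm{div}_gX_\alpha=\frac{n}{n-1}S_g(\bar x_0)+o(1)$. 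Tracking the powers of $\mu_\alpha$ (each derivative and each factor of $X_\alpha$ costs one $\mu_\alpha$; the volume element is $\mu_\alpha^n dy$; each $u_\alpha^2$ carries $\mu_\alpha^{-(n-2)}$), the first integral in \eqref{eqcrucial2} scales like $\mu_\alpha^2$ with limiting constant $\int_{\mathbb{R}^n}\big(u_0\,y\cdot\nabla u_0+\frac{n-2}{2}u_0^2\big)dy$, and an integration by parts plus the fact that $u_0$ solves $\Delta u_0=u_0^{2^\star-1}$ and the Pohozaev identity on $\mathbb{R}^n$ identify this as $-C_n$ with $C_n=\int_{\mathbb{R}^n}u_0^2$; similarly the second integral gives $\frac{nS_g(\bar x_0)}{n-1}C_n\,\mu_\alpha^2(1+o(1))$. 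For $\tilde{\mathcal R}_\alpha$ the same scaling applies, but now the integrand carries the extra factor $(qv_\alpha-1)^2=q^2\hat w_\alpha^2$ in the rescaled variables, where $\hat w_\alpha=\frac1q-\hat v_\alpha\to 0$ a.e.\ and in $L^p_{loc}$ by Lemma \ref{controlv} (equivalently Lemma \ref{controlv2}); since $(qv_\alpha-1)^2$ is bounded by $1$, dominated convergence on the core forces the limiting constant to be $0$, so the core contribution to $\tilde{\mathcal R}_\alpha$ is $o(\mu_\alpha^2)$.

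Next I would control the neck. Here the point is that by the second line of \eqref{eqp61} (or more simply the first line) one has, on $B_{x_\alpha}(r_\alpha)\setminus B_{x_\alpha}(R\mu_\alpha)$, the bound $u_\alpha(x)\le C\mu_\alpha^{(n-2)/2}d_g(x_\alpha,x)^{2-n}$ and $|\nabla u_\alpha(x)|\le C\mu_\alpha^{(n-2)/2}d_g(x_\alpha,x)^{1-n}$. Plugging these into the neck integrals together with $|X_\alpha|=O(d_g(x_\alpha,x))$ and $\mathrm{div}_gX_\alpha=O(1)$, one is left with integrals of the form $\mu_\alpha^{n-2}\int_{R\mu_\alpha}^{r_\alpha} s^{\gamma}\,s^{n-1}ds$ for exponents $\gamma$ coming out to $2-2n$ (for the $u_\alpha X_\alpha(\nabla u_\alpha)$ and $(\mathrm{div}_gX_\alpha)u_\alpha^2$ terms) and $4-2n$ (for the $(\Delta_g\mathrm{div}_gX_\alpha)u_\alpha^2$ term). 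A direct computation shows these are $O\!\big(\mu_\alpha^{n-2}(R\mu_\alpha)^{2-n}\big)=O(R^{2-n}\mu_\alpha^2)$ in the first case and $o(\mu_\alpha^2)$ when $n\ge 5$ in the second; letting $R\to+\infty$ after $\alpha\to+\infty$ kills the $R^{2-n}$ remainder. The same neck estimate, now with the harmless extra bounded factor $(qv_\alpha-1)^2\le 1$, gives that the neck part of $\tilde{\mathcal R}_\alpha$ is also $O(R^{2-n}\mu_\alpha^2)$, hence negligible after $R\to+\infty$. Combining core and neck yields \eqref{eqcrucial} and \eqref{eqcrucial2}.

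The step I expect to be the main obstacle is \eqref{eqcrucial}, i.e.\ showing $\tilde{\mathcal R}_\alpha=o(\mu_\alpha^2)$: this is precisely the place where the loss of $C^1$-control on $v_\alpha$ (Corollary \ref{NonC0Conv}) would normally be fatal, and the whole point is that only the \emph{integral} of $(qv_\alpha-1)^2$ against an $L^1$-bounded bubble profile enters, so the $L^p_{loc}$-convergence $\hat v_\alpha\to \frac1q$ from Lemma \ref{controlv2} suffices. Concretely, on the core one must justify passing to the limit in $\int_{B_0(R)}\hat w_\alpha^2\{\cdots\}dv_{\hat g_\alpha}$ using that $\hat w_\alpha\to 0$ a.e.\ with $0\le\hat w_\alpha\le\frac1q$ and that the bracket converges in $C^0_{loc}$ to an $L^1$ profile; on the neck one needs the pointwise decay of $u_\alpha$ from Proposition \ref{p61} to be good enough that the neck integral is uniformly small in $\alpha$ after fixing $R$ large. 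A secondary technical point is the bookkeeping of the $\mu_\alpha$-expansions of $X_\alpha$, $\mathrm{div}_gX_\alpha$, $\Delta_g\mathrm{div}_gX_\alpha$, and of the metric $\hat g_\alpha\to\xi$, to make sure no term of order $\mu_\alpha^2$ is lost and that the Euclidean Pohozaev computation indeed produces $-C_n$ and $\frac{nS_g(\bar x_0)}{n-1}C_n$ with the stated $C_n=\int_{\mathbb{R}^n}u_0^2\,dx$.
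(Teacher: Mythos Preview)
Your approach is essentially the same as the paper's: rescale at the scale $\mu_\alpha$, use the $C^1_{loc}$ convergence of Lemma \ref{l61} and the pointwise decay of Proposition \ref{p61}, and invoke Lemma \ref{controlv} to kill the factor $(qv_\alpha-1)^2$ in $\tilde{\mathcal R}_\alpha$. The only organizational difference is that the paper splits once at $|y|=1$ (i.e.\ $R=1$) and then applies dominated convergence on the \emph{entire} neck $B_0(r_\alpha/\mu_\alpha)\setminus B_0(1)$, using the bound $|\hat\Psi_\alpha(y)|\le C|y|^{4-2n}$ as an $L^1(\mathbb{R}^n\setminus B_0(1))$ dominating function (here $n\ge 5$ is exactly what makes $|y|^{4-2n}$ integrable at infinity) together with the a.e.\ convergence $\hat v_\alpha\to 1/q$; this avoids the double limit $\alpha\to\infty$ followed by $R\to\infty$. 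Your two-parameter scheme works equally well and is a standard variant.

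One bookkeeping slip to fix: the decay exponent on the neck is $\gamma=4-2n$, not $2-2n$. Indeed $|u_\alpha X_\alpha(\nabla u_\alpha)|$ and $(\mathrm{div}_gX_\alpha)u_\alpha^2$ are both $O\!\big(\mu_\alpha^{n-2}d_g(x_\alpha,x)^{4-2n}\big)$, and integrating $s^{4-2n}\cdot s^{n-1}$ from $R\mu_\alpha$ to $r_\alpha$ gives $O(R^{4-n}\mu_\alpha^2)$, not $O(R^{2-n}\mu_\alpha^2)$; with your stated exponent $2-2n$ the neck bound would be $O(R^{2-n})$ with no $\mu_\alpha^2$ at all. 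Also, the identification of the limiting constant as $-C_n$ does not require the Euclidean Pohozaev identity or the equation for $u_0$: it is the one-line integration by parts $\int_{\mathbb{R}^n}\langle y,\nabla u_0\rangle u_0\,dy=-\frac{n}{2}\int_{\mathbb{R}^n}u_0^2\,dy$.
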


\begin{proof}[Proof of Lemma \ref{crucial}] For $x\in B_0(\rho_\alpha/\mu_\alpha)\subset \mathbb{R}^n$ we let 
 \begin{equation}\label{eq12}
 \begin{split}
 &\hat{u}_\alpha(x ) =  u_\alpha\left(\exp_{x_\alpha}(\mu_\alpha x)\right),\\
 &  \hat{v}_\alpha(x) =  v_\alpha\left(\exp_{x_\alpha}(\mu_\alpha x)\right),\\
 &\hat{\varphi}_\alpha(x) =  {\text{div}_g}(X_\alpha)\left(\exp_{x_\alpha}(\mu_\alpha x)\right),\\
& \hat{\psi}_\alpha(x)  =  \left(X_\alpha(\nabla u_\alpha)\right)\left(\exp_{x_\alpha}(\mu_\alpha x)\right),\\
&\hat{g}_\alpha(x) = \left(\exp^\star_{x_\alpha} g\right)(\mu_\alpha x).
 \end{split}
 \end{equation}
 Also we define
 \begin{equation}\label{eq12bis}
 \hat{\Psi}_\alpha(x)  =  \left(\mu_\alpha^{\frac{n-2}{2}} \hat{u}_\alpha (x) \right) \mu_\alpha^{\frac{n-2}{2}} \hat{\psi}_\alpha(x) 
 +\frac{n-2}{2n}\hat{\varphi}_\alpha(x) \left(\mu_\alpha ^{\frac{n-2}{2}} \hat{u}_\alpha (x) \right)^2.
 \end{equation}
 Since $\mu_\alpha\to 0$, we get that $\hat{g}_\alpha \to \xi$ in $C^2_{loc}(\mathbb{R}^n)$ as $\alpha\to +\infty$, 
 where $\xi$ is the Euclidean metric in $\mathbb{R}^n$. On the other hand, thanks to \eqref{eql61}, \eqref{eq41} and \eqref{estimX}, we can write that
 \begin{equation}\label{eq13}
 \hat{\Psi}_\alpha\to \left(u_0 \langle \nabla u_0,x\rangle_{\xi}+\frac{n-2}{2}u_0^2 \right) \text{ in } C^0_{loc}(\mathbb{R}^n),
 \end{equation}
 where $u_0$ is as in \eqref{u0}. Setting 
 \begin{equation}\label{Psi1}
 \Psi_\alpha=u_\alpha X_\alpha(\nabla u_\alpha)  + \frac{n-2}{2n}(\text{div}_g X_\alpha) u_\alpha^2,
 \end{equation}
 we get  by mixing \eqref{eqcontrolv} and  \eqref{eq13} that
 \begin{equation}\label{eq16}
 \begin{split}
 \int_{B_{x_\alpha}(\mu_\alpha)}  (qv_\alpha-1)^2 \Psi_\alpha dv_g & =  \mu_\alpha^2 \int_{B_0(1)}(q\hat{v}_\alpha-1)^2 \hat{\Psi}_\alpha dv_{\hat{g}_\alpha},\\
 & = o(\mu_\alpha^2).
 \end{split}
 \end{equation}
Using now \eqref{eqp61}, we can write that there exists  $C>0$ independent of $\alpha$ such that
\begin{equation}\label{eq14}
\left|\hat{\Psi}_\alpha(x) \right| \leq \frac{C}{|x|^{2n-4}}
\end{equation}
for all $x \in B_0\left(\frac{r_\alpha}{\mu_\alpha} \right)\backslash B_0(1)$. 
Then, since $n\geq5$ and $0\leq \hat{v}_\alpha \leq \frac{1}{q}$, we get by \eqref{eqcontrolv}, \eqref{eq13}, \eqref{eq14}, and the Lebesgue's 
dominated convergence theorem that
\begin{equation}\label{eq15}
 \begin{split}
 \int_{B_{x_\alpha}(r_\alpha)\backslash B_{x_\alpha}(\mu_\alpha)}  (qv_\alpha-1)^2 \Psi_\alpha dv_g & =  \mu_\alpha^2 \int_{B_0(r_\alpha/\mu_\alpha)\backslash B_0(1)}(q\hat{v}_\alpha-1)^2 \hat{\Psi}_\alpha dv_{\hat{g}_\alpha}\\
  & = o(\mu_\alpha^2).
  \end{split}
\end{equation}
In particular, combining \eqref{eq16} and \eqref{eq15}, we get that  \eqref{eqcrucial} holds true. 
Now, we write that
$$\int_{B_{x_\alpha}(r_\alpha)} \Psi_\alpha(x) dv_g(x)=\mu_\alpha^2 \int_{B_0\left(\frac{r_\alpha}{\mu_\alpha}\right)}\hat{\Psi}_\alpha(x) dv_{\hat{g}_\alpha}(x), $$
and thus, using  \eqref{eq41}, \eqref{eq13}, \eqref{eq14}, and arguing as above, we get that
\begin{equation}\label{crucial100}
\begin{split}
 \int_{B_{x_\alpha}(r_\alpha)} \left\{u_\alpha X_\alpha(\nabla u_\alpha) +\frac{n-2}{2n}(\text{div}_g X_\alpha) u_\alpha^2\right\} dv_g\\
   ~~= \mu_\alpha^{2}\left(\int_{\mathbb{R}^n} \left(u_0 \langle \nabla u_0,x\rangle_{\xi}+\frac{n-2}{2}u_0^2 \right) dx+o(1) \right).
  \end{split}
\end{equation}
Integrating by parts, there holds that
\begin{equation}\label{crucial101}
\int_{\mathbb{R}^n} \left(u_0 \langle \nabla u_0,x\rangle_{\xi}+\frac{n-2}{2}u_0^2 \right) dx=-\int_{\mathbb{R}^n} u_0^2 dx.
\end{equation}
By \eqref{crucial100} and \eqref{crucial101}, the first equation in \eqref{eqcrucial2} holds true. The second one holds true the same way, using \eqref{eqp61} and the 
third equation in \eqref{estimX}. This ends the proof of Lemma \ref{crucial}.
\end{proof}

The second lemma we need to prove Proposition \ref{p64} is the following. Its proof only uses the fact that the 
sequence $(h_\alpha)_\alpha$ in \eqref{h} and \eqref{SchroSingleEqt} is bounded in $L^\infty$. Then we can proceed as in Hebey \cite{HebeyBookZurich} 
to prove it. We refer to 
Hebey \cite{HebeyBookZurich} for the proof of Lemma \ref{l64}. 

\begin{lem}\label{l64}
 Let $(M,g)$ be a closed Riemannian $n$-manifold, $n\geq 5$, and $((u_\alpha,v_\alpha))_\alpha$ be a sequence of smooth positive solutions of \eqref{kgstab} 
 such that \eqref{maxu} holds true. Let $(x_\alpha)_\alpha$ and $(\rho_\alpha)_\alpha$ be such that \eqref{hyp} holds true. Let $Q_{2,\alpha}$ be as in 
 \eqref{DefQIALpha}. Then
\begin{equation}\label{eql64}
Q_{2,\alpha}=o(\mu_\alpha^2)+o(\mu_\alpha^{n-2}r_\alpha^{2-n})~,
\end{equation}
where $\mu_\alpha$ is as in \eqref{mu} and $r_\alpha$ as in \eqref{eq40}.
\end{lem}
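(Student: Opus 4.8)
The plan is to reduce the estimate to the pointwise information already available (the expansions \eqref{estimX} of $X_\alpha$, Lemma~\ref{l61}, and the estimates \eqref{eqp61} of Proposition~\ref{p61}), combined with a splitting of $B_{x_\alpha}(r_\alpha)$ at the scale $\mu_\alpha$. The starting point is a Taylor expansion, in geodesic normal coordinates at $x_\alpha$, of the conformal Killing operator
$$\mathcal{L}X_\alpha := \nabla X_\alpha - \frac{1}{n}\big(\text{div}_g X_\alpha\big)g$$
of the $1$-form $X_\alpha$ introduced in \eqref{eqX}. Using only the smoothness of $g$ and the explicit form of $X_\alpha$, one gets $|\mathcal{L}X_\alpha(x)| = O\big(d_g(x_\alpha,x)^2\big)$ and, crucially, the improved bound for its radial-radial component
\begin{equation*}
(\mathcal{L}X_\alpha)^{\sharp}(\nu,\nu)(x) = O\big(d_g(x_\alpha,x)^3\big),
\end{equation*}
where $\nu$ is the unit radial vector field issued from $x_\alpha$. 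This is exactly the purpose of the Ricci correction $-\frac{1}{6(n-1)}Rc_g^{\sharp}(\nabla f_\alpha,\nabla f_\alpha)\nabla f_\alpha$ in \eqref{eqX}: it is tuned so as to cancel precisely the $O(d_g^2)$ curvature obstruction carried, along the radial direction, by the Hessian of $f_\alpha=\frac12 d_g(x_\alpha,\cdot)^2$, while the Euclidean radial field $x\mapsto x$ has a vanishing conformal Killing operator. In particular no control on the $v_\alpha$'s enters here; as for Proposition~\ref{p61}, the only ingredient is $\|h_\alpha\|_{L^\infty}\leq C$.

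I would then split $B_{x_\alpha}(r_\alpha)=B_{x_\alpha}(\mu_\alpha)\cup\big(B_{x_\alpha}(r_\alpha)\setminus B_{x_\alpha}(\mu_\alpha)\big)$. On the concentration ball, the change of variables $x=\exp_{x_\alpha}(\mu_\alpha y)$ writes the corresponding part of $Q_{2,\alpha}$ as $\mu_\alpha^{2}\int_{B_0(1)}\Theta_\alpha\,dv_{\hat g_\alpha}$, where $\Theta_\alpha$ equals $\mu_\alpha^{n-2}(\mathcal{L}X_\alpha)^{\sharp}(\nabla u_\alpha,\nabla u_\alpha)$ expressed in the rescaled variables and $\hat g_\alpha\to\xi$ in $C^2_{loc}(\mathbb{R}^n)$. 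By Lemma~\ref{l61} the rescaled $u_\alpha$'s converge to $u_0$ in $C^1_{loc}(\mathbb{R}^n)$; since $\nabla u_0$ is radial and, by the first step, the degree-two part of $(\mathcal{L}X_\alpha)^{\sharp}(\nu,\nu)$ vanishes, the functions $\Theta_\alpha$ are bounded by a fixed constant on $B_0(1)$ and converge to $0$ a.e., so this part of $Q_{2,\alpha}$ is $\mu_\alpha^2\cdot o(1)=o(\mu_\alpha^2)$ by dominated convergence.

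On the neck $B_{x_\alpha}(r_\alpha)\setminus B_{x_\alpha}(\mu_\alpha)$, I would decompose $\nabla u_\alpha$ into its radial part $(\partial_\nu u_\alpha)\nu$ and its tangential part $T_\alpha$, so that $(\mathcal{L}X_\alpha)^{\sharp}(\nabla u_\alpha,\nabla u_\alpha)=(\partial_\nu u_\alpha)^2(\mathcal{L}X_\alpha)^{\sharp}(\nu,\nu)+2(\partial_\nu u_\alpha)(\mathcal{L}X_\alpha)^{\sharp}(\nu,T_\alpha)+(\mathcal{L}X_\alpha)^{\sharp}(T_\alpha,T_\alpha)$. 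For the first term I would combine the bound $(\mathcal{L}X_\alpha)^{\sharp}(\nu,\nu)=O(d_g^3)$ with $|\nabla u_\alpha|\leq C\mu_\alpha^{\frac{n-2}{2}}d_g(x_\alpha,\cdot)^{1-n}$ from \eqref{eqp61}; integrating over the neck this contributes $O\big(\mu_\alpha^{n-2}\int_{\mu_\alpha}^{r_\alpha}t^{4-n}\,dt\big)$, which is $O(\mu_\alpha^3)$ when $n\geq6$ and $O(\mu_\alpha^3|\log\mu_\alpha|)$ when $n=5$, hence $o(\mu_\alpha^2)$ in every case $n\geq5$. For the remaining two terms, since the bubble $B_\alpha$ of \eqref{bulle} is radial about $x_\alpha$ the tangential part $T_\alpha$ equals the tangential part of $\nabla w_\alpha$ with $w_\alpha:=u_\alpha-B_\alpha$; feeding the second line of \eqref{eqp61} into standard interior gradient estimates for $w_\alpha$, which solves an equation with $L^\infty$-bounded right-hand side because $\|h_\alpha\|_{L^\infty}\leq C$, yields
$$|T_\alpha|\leq C\mu_\alpha^{\frac{n-2}{2}}\big(r_\alpha^{2-n}d_g^{-1}+d_g^{2-n}\big)+\varepsilon_\alpha\mu_\alpha^{\frac{n-2}{2}}d_g^{1-n}$$
with $\varepsilon_\alpha\to0$. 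Combining this with $|\mathcal{L}X_\alpha|=O(d_g^2)$ and integrating term by term, each of the resulting contributions is $o(\mu_\alpha^2)$ or $o(\mu_\alpha^{n-2}r_\alpha^{2-n})$, the power counting using $n\geq5$, $\mu_\alpha\to0$, $r_\alpha/\mu_\alpha\to+\infty$, $\varepsilon_\alpha\to0$ and the fact that the $r_\alpha$'s stay bounded. Adding the two regions gives \eqref{eql64}.

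The main obstacle is the very first step, that is, proving the order-three vanishing of the radial component of $\mathcal{L}X_\alpha$ and checking that the Ricci correction in \eqref{eqX} is calibrated exactly for that purpose: with the bare field $\nabla f_\alpha$ the contribution of $(\partial_\nu u_\alpha)^2(\mathcal{L}X_\alpha)^{\sharp}(\nu,\nu)$ on the neck would only be $O(\mu_\alpha^2)$, not $o(\mu_\alpha^2)$, when $n=5$, and the statement would fail. Upgrading the pointwise sup bound \eqref{eqp61} into the pointwise gradient bound for $w_\alpha$ used above also requires a short rescaled elliptic (Schauder) argument, but this is routine once $\|h_\alpha\|_{L^\infty}\leq C$ is known.
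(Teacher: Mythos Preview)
Your proposal is correct and follows essentially the same route as the argument the paper invokes (the paper gives no self-contained proof and simply refers to Hebey \cite{HebeyBookZurich}, noting that only the bound $\|h_\alpha\|_{L^\infty}\le C$ is needed). The two-scale splitting $B_{x_\alpha}(\mu_\alpha)\cup\big(B_{x_\alpha}(r_\alpha)\setminus B_{x_\alpha}(\mu_\alpha)\big)$, the use of the radiality of $\nabla u_0$ on the core together with the vanishing of the degree-two radial-radial part of $\mathcal{L}X_\alpha$, and on the neck the radial/tangential decomposition with the tangential gradient controlled through $w_\alpha=u_\alpha-B_\alpha$ and the second line of \eqref{eqp61}, are precisely the ingredients of the proof in Hebey's book; your identification of the Ricci correction in \eqref{eqX} as being calibrated so that $(\mathcal{L}X_\alpha)^\sharp(\nu,\nu)=O(d_g(x_\alpha,\cdot)^3)$ is the key point there as well, and your power counting for the neck contributions is accurate for all $n\ge 5$.
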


Now that we have Lemmas \ref{crucial} and \ref{l64} we can prove Proposition \ref{p64}. 

\begin{proof}[Proof of Proposition \ref{p64}] Let $R\geq 6$ be such that $Rr_\alpha\leq 6\rho_\alpha$ for all $\alpha\gg 1$. We assume first that $r_\alpha\to 0$ as $\alpha\to +\infty$. Given $x\in B_0(R)$, we define
\begin{equation}\label{defw1}
\begin{split}
&w_\alpha(x) = r_\alpha^{n-2} \mu_\alpha^{-\frac{n-2}{2}} u_\alpha\left(\exp_{x_\alpha}(r_\alpha x) \right),\\
&g_\alpha(x) = \left(\exp_{x_\alpha}^\star g\right)(r_\alpha x),~ \text{and}\\
&\tilde{h}_\alpha(x) = h_\alpha\left(\exp_{x_\alpha}(r_\alpha x) \right),
\end{split}
\end{equation}
where $h_\alpha$ is as in \eqref{h}. Since $r_\alpha\to 0$ as $\alpha\to +\infty$, we have that $g_\alpha\to \xi$ in $C^2_{loc}(\mathbb{R}^n)$, where $\xi$ is the Euclidean metric. Thanks to Proposition \ref{p61}, we also have that
\begin{equation}\label{eq102}
|w_\alpha(x)|\leq C|x|^{2-n}
\end{equation}
in $B_0\left(\frac{R}{2}\right)\backslash \{0\}$. By \eqref{kgstab},
\begin{equation}\label{eq103}
\Delta_{g_\alpha} w_\alpha +r_\alpha^2\tilde{h}_\alpha w_\alpha=\left(\frac{\mu_\alpha}{r_\alpha}\right)^2 w_\alpha^{2^\star-1}
\end{equation}
in $B_0\left(\frac{R}{2} \right)$. Using \eqref{eq41}, the fact that $(h_\alpha)_\alpha$ is bounded in $L^\infty$, and standard elliptic theory, we get that, 
after passing to a subsequence, 
\begin{equation}\label{eq104}
w_\alpha\to w \quad \text{in}\quad C^1_{loc}\left(B_0\left(R/2\right)\backslash \{0\}\right)
\end{equation}
as $\alpha \to +\infty$, where $w$ is non-negative and harmonic in $B_0\left(\frac{R}{2}\right)\backslash \{0\}$. Thanks to \eqref{eq102}, we have that
\begin{equation}\label{eq105}
|w(x)|\leq C|x|^{2-n}
\end{equation}
in $B_0\left(\frac{R}{2}\right)\backslash \{0\}$. Thus, thanks to the B\^ocher theorem on singularities of non-negative harmonic functions, we can write that
\begin{equation}\label{eq106}
w(x)= \frac{\Lambda}{|x|^{n-2}}+\mathcal{H}(x),
\end{equation}
where $\Lambda\geq 0$ and $\mathcal{H}$ is harmonic in the full ball, namely satisfies $\Delta \mathcal{H}=0$ in $B_0\left(\frac{R}{2} \right)$. In order to see that 
$\Lambda=(n(n-2))^{\frac{n-2}{2}}$, we integrate \eqref{eq103} in $B_0(1)$ and proceed as in Hebey \cite{HebeyBookZurich}. 
Now we prove that $\mathcal{H}(0)\leq 0$ and that $r_\alpha\to 0$ as $\alpha \to +\infty$. For that purpose, we return to the Riemannian 
Pohozaev identity \eqref{eq112}. By  \eqref{eq41} and \eqref{estimX}, 
\begin{equation}\label{eq113}
Q_{3,\alpha}=O(\mu_\alpha^n r_\alpha^{-n})+O(\mu_\alpha^{n-2}r_\alpha^{4-n}) = o(\mu_\alpha^{n-2}r_\alpha^{2-n})+o(\mu_\alpha^2).
\end{equation}
By \eqref{eql64}, $Q_{2,\alpha}$ is known. By Lemma \ref{crucial}, the left hand side in \eqref{eq112} and $\tilde{\mathcal{R}}_\alpha$ are known. 
In particular, we get with \eqref{eq112}, \eqref{eql64}, \eqref{eq113}, and Lemma \ref{crucial} that
\begin{equation}\label{eq114}
Q_{1,\alpha}=\left(\int_{\mathbb{R}^n}u_0^2 dx\right) \left(\frac{n-2}{4(n-1)}S_g(\bar{x}_0)-m_0^2 \right)\mu_\alpha^2+o(\mu_\alpha^2)+o(\mu_\alpha^{n-2}r_\alpha^{2-n}),
\end{equation}
where $x_\alpha\to\bar{x}_0$, as $\alpha\to +\infty$. By \eqref{eqp61}, \eqref{estimX} and the expression of $Q_{1,\alpha}$, we have that
\begin{equation}\label{RoughEstQ1ALpha}
Q_{1,\alpha}=O(\mu_\alpha^{n-2}r_\alpha^{2-n}) .
\end{equation}
By the assumption \eqref{prel1} of Theorem \ref{CritThm}, since $n \ge 5$, we get from \eqref{eq114} and \eqref{RoughEstQ1ALpha} that
\begin{equation}\label{eq115}
r_\alpha \to 0
\end{equation}
as $\alpha\to +\infty$. Then \eqref{eq104}, \eqref{eq105} and \eqref{eq106} hold true and, as one can check, 
we get in turn that
\begin{equation}\label{eq116}
Q_{1,\alpha}=-\left(\frac{1}{2} {(n-2)^2} \omega_{n-1} \Lambda \mathcal{H}(0) +o(1) \right) \mu_\alpha^{n-2}r_\alpha^{2-n}.
\end{equation}
Coming back to \eqref{eq114}, it follows from \eqref{eq116} that
\begin{equation}\label{eq117}
\frac{1}{2} {(n-2)^2} \omega_{n-1} \Lambda \mathcal{H}(0)=-C_n \left(\frac{n-2}{4(n-1)}S_g(\bar{x}_0)-m_0^2 \right)\lim_{\alpha\to +\infty} \left(\mu_\alpha^{4-n}r_\alpha^{n-2} \right),
\end{equation}
where $C_n=\int_{\mathbb{R}^n}u_0^2 dx$. 
Using again the assumption \eqref{prel1} of Theorem \ref{CritThm} we get that
\begin{equation}\label{eq118}
\mathcal{H}(0)\leq 0.
\end{equation}
At this point, it remains to prove that $\rho_\alpha=r_\alpha$. If it is not the case, then 
$r_\alpha<\rho_\alpha$ and we get with \eqref{eq43} that $\left(r^{\frac{n-2}{2}} \varphi(r)\right)'(1) = 0$, where
$$\varphi(r)=\frac{1}{\omega_{n-1} r^{n-1}}\int_{\partial B_0(r)} w d\sigma=\frac{(n(n-2))^{\frac{n-2}{2}}}{r^{n-2}}+\mathcal{H}(0),$$
by \eqref{eq104}, and \eqref{eq106}. Hence $\mathcal{H}(0)=(n(n-2))^{\frac{n-2}{2}}>0$ and we get a contradiction with \eqref{eq118}. 
This ends the proof of Proposition \ref{p64}.
\end{proof}

 \section{A priori bounds in the critical case}\label{APriBdCritCase}
 
 We prove the a priori bound property in Theorem \ref{CritThm}. We let $(M,g)$ be a smooth 
 closed Riemannian $n$-manifold, $n\geq 5$, $m_0,m_1, q>0$ and $(\omega_\alpha)_\alpha$ be a 
 sequence in $(-m_0,m_0)$ such that $\omega_\alpha\to \omega$ as $\alpha\to +\infty$ for some $\omega\in [-m_0,m_0]$. Also, we let 
 $((u_\alpha,v_\alpha))_\alpha$ be a sequence of smooth positive functions satisfying \eqref{kgstab} for all $\alpha$, and satisfying \eqref{maxu}. We assume 
 that \eqref{prel1} of Theorem \ref{CritThm} holds true everywhere in $M$. At this point, the sequence $(u_\alpha)_\alpha$ is not bounded in $H^1$, so we cannot apply the 
 $H^1$-theory for blow-up. Instead, by Lemma 6.7 in Hebey \cite{HebeyBookZurich}, we get that there exists $C_1>0$ such that for any $\alpha$, there exist $N_\alpha\in\mathbb{N}^\star$ and $N_\alpha$ critical points of $u_\alpha$, denoted by $\left\{x_{1,\alpha},...,x_{N_\alpha,\alpha} \right\}$ such that 
\begin{equation}\label{l671}
d_g(x_{i,\alpha},x_{j,\alpha})^{\frac{n-2}{2}} u_\alpha(x_{i,\alpha})\geq 1
\end{equation} 
for all $i,j\in \left\{1,...,N_\alpha \right\}$, $i\neq j$, and 
\begin{equation}\label{l672}
\left(\min_{i=1,...,N_\alpha} d_g(x_{i,\alpha},x) \right)^{\frac{n-2}{2}} u_\alpha(x) \leq C_1
\end{equation}  
for all $x\in M$ and all $\alpha$. We define \begin{equation}\label{eq129}
d_\alpha=\min_{1\leq i<j \leq N_\alpha} d_g(x_{i,\alpha},x_{j,\alpha}).
\end{equation}
If $N_\alpha=1$, we set $d_\alpha=\frac{1}{4}i_g$, where $i_g$ is the injectivity radius of $(M,g)$. In case $N_\alpha\geq 2$, we reorder the $x_{i,\alpha}$'s such that
\begin{equation}\label{eq130}
d_\alpha=d_g(x_{1,\alpha},x_{2,\alpha})\leq d_g(x_{1,\alpha},x_{3,\alpha}) \leq ... \leq d_g(x_{1,\alpha},x_{N_\alpha,\alpha}).
\end{equation}
We prove now that the $d_\alpha$'s do not converge to zero and thus that blow-up points have to be isolated when we assume \eqref{prel1}.

\begin{prop}\label{p65}
 Let $(M,g)$ be a closed Riemannian $n$-manifold, $n\geq 5$, and $((u_\alpha,v_\alpha))_\alpha$ be a sequence of smooth positive solutions of \eqref{kgstab} 
 such that \eqref{maxu} holds true. We assume that \eqref{prel1} holds true. Then, up to a subsequence, the sequence $(d_\alpha)_\alpha$ converges to a positive constant, 
 where $d_\alpha$ is as in \eqref{eq129}.
\end{prop}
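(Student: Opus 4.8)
The plan is to argue by contradiction, assuming that, up to a subsequence, $d_\alpha \to 0$, and to rescale at the scale $d_\alpha$ around $x_{1,\alpha}$ in order to produce a nontrivial limit configuration that violates the sign constraint coming from Proposition \ref{p64}. First I would set $y_\alpha = x_{1,\alpha}$ and consider the rescaled functions
\[
\tilde u_\alpha(x) = d_\alpha^{\frac{n-2}{2}} u_\alpha\bigl(\exp_{y_\alpha}(d_\alpha x)\bigr), \qquad
\tilde g_\alpha(x) = \bigl(\exp_{y_\alpha}^\star g\bigr)(d_\alpha x),
\]
on balls $B_0(\delta/d_\alpha)$. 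Using \eqref{l672} and \eqref{eq130}, the points $x_{i,\alpha}$ with $d_g(x_{1,\alpha},x_{i,\alpha}) = O(d_\alpha)$ rescale to a finite set of limit points in $\mathbb{R}^n$ which stay at mutual distance $\geq 1$ (by \eqref{l671}) and at distance $\geq 1$ from $0$, while $\tilde u_\alpha$ satisfies a rescaled version of \eqref{SchroSingleEqt} with potential $d_\alpha^2 \tilde h_\alpha \to 0$ in $L^\infty$ and with the critical nonlinearity essentially preserved in scale only near the concentration points. The key point is that, away from this finite set of limit points, $\tilde u_\alpha$ is controlled by \eqref{l672} and by Proposition \ref{p61} applied at each $x_{i,\alpha}$, so elliptic theory gives $C^1_{loc}$ convergence of $\tilde u_\alpha$ on compact subsets of $\mathbb{R}^n$ minus the limit points to a nonnegative function $\tilde u$ which is a harmonic function (the rescaled nonlinearity having a vanishing coefficient $(\mu_\alpha/d_\alpha)^{2^\star-2} \to 0$ since $\mu_\alpha \le C\mu_{1,\alpha}$ and $\mu_{1,\alpha}/d_\alpha \to 0$ by Proposition \ref{p64} together with \eqref{l671}), with isolated singularities at the limit points and at $0$.

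Next I would identify the nature of $\tilde u$ at its singularities: by the B\^ocher theorem, near each singular point $z_j$ (including $z_0 = 0$), $\tilde u$ behaves like $\Lambda_j |x - z_j|^{2-n} + (\text{harmonic})$ with $\Lambda_j \geq 0$, and the bound \eqref{l672} forces $\tilde u(x) \leq C (\min_j |x - z_j|)^{2-n}$ globally, so $\tilde u$ is a finite sum of such terms plus a globally harmonic piece which must vanish (by Liouville, since it is bounded near infinity by the decay of the other terms and harmonic on all of $\mathbb{R}^n$). This puts us exactly in the framework of Proposition \ref{p64}: at the point $x_{1,\alpha}$, with $\rho_\alpha$ taken of order $d_\alpha$ (one checks \eqref{hyp} is satisfied with $\rho_\alpha \sim \tfrac{1}{4} d_\alpha$, using that $x_{1,\alpha}$ is a critical point of $u_\alpha$, the bound \eqref{l672}, and the blow-up \eqref{maxu} combined with Lemma \ref{l61}), the range $r_\alpha$ must equal $\rho_\alpha \sim d_\alpha$ and \eqref{eqp64} gives that the harmonic remainder $\mathcal{H}$ in the expansion of $\tilde u$ near $0$ satisfies $\mathcal{H}(0) \leq 0$. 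On the other hand, $\mathcal{H}$ also contains the positive contributions $\Lambda_j |{-z_j}|^{2-n} > 0$ coming from the other blow-up points $z_j$, $j \neq 0$, which are at finite nonzero distance from $0$; since there is at least one such point (because $d_g(x_{1,\alpha},x_{2,\alpha}) = d_\alpha$ rescales to a limit point $z_2$ with $|z_2| = 1$), we get $\mathcal{H}(0) > 0$, a contradiction.

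The main obstacle I expect is the bookkeeping needed to verify that the hypotheses \eqref{hyp} of Proposition \ref{p64} are genuinely met at the scale $\rho_\alpha \sim d_\alpha$ around $x_{1,\alpha}$, and in particular that the range $r_\alpha$ produced by the definition \eqref{eq40} coincides with $\rho_\alpha$ rather than being much smaller (if $r_\alpha \ll d_\alpha$, the rescaled limit would not ``see'' the neighbouring blow-up points and the argument collapses). This requires showing that the spherical average $\varphi_\alpha(r)$ of $u_\alpha$ around $x_{1,\alpha}$, suitably weighted by $r^{(n-2)/2}$, stays non-increasing all the way up to a radius comparable to $d_\alpha$ — which should follow from the decay estimate \eqref{l672} together with \eqref{eq130} ensuring no other blow-up point intrudes at scales $\ll d_\alpha$, but the argument is delicate because it must rule out a spurious interior critical point of the weighted average coming from the transition between the bubble profile and the slowly varying background. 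Once this is in place, the sign clash $\mathcal{H}(0) \leq 0$ versus $\mathcal{H}(0) > 0$ closes the argument, and hence $d_\alpha$ stays bounded away from zero along the subsequence, which is the assertion of Proposition \ref{p65}.
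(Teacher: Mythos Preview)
Your outline is essentially the paper's ``Case 2'' argument, but it has a genuine gap: you never address the alternative that the rescaled heights $\tilde u_\alpha(\check x_{i,\alpha})$ remain bounded. With the rescaling you write, $\tilde u_\alpha = d_\alpha^{(n-2)/2} u_\alpha(\exp_{y_\alpha}(d_\alpha\,\cdot\,))$ satisfies
\[
\Delta_{\tilde g_\alpha}\tilde u_\alpha + d_\alpha^2\tilde h_\alpha\tilde u_\alpha = \tilde u_\alpha^{2^\star-1},
\]
so the nonlinearity carries coefficient $1$, not $(\mu_\alpha/d_\alpha)^{2^\star-2}$; no small factor appears at this stage. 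The estimate \eqref{l671} only gives $\tilde u_\alpha(0)\ge 1$, not $\tilde u_\alpha(0)\to\infty$, so the third line of \eqref{hyp} need not hold at scale $\rho_\alpha\sim d_\alpha$. When it fails (the paper's Case 1), the $\tilde u_\alpha$'s converge in $C^1_{loc}(\mathbb{R}^n)$ to a nontrivial positive solution of $\Delta\tilde u = \tilde u^{2^\star-1}$ with at least two critical points ($0$ and the limit of $\check x_{2,\alpha}$), and the contradiction comes from the Caffarelli--Gidas--Spruck classification, not from Proposition~\ref{p64}. Your B\^ocher/harmonic picture is simply not the limiting object in this scenario.

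When the rescaled heights do diverge (the paper's Case 2), your argument needs one more renormalisation: one passes to $\check v_\alpha = \tilde u_\alpha(0)\,\tilde u_\alpha$, whose equation carries the small factor $\tilde u_\alpha(0)^{-(2^\star-2)}\to 0$, so that $\check v_\alpha\to\check G$ harmonic with isolated singularities. A Harnack-type estimate (the paper's \eqref{eq134}) is used to show the two cases are mutually exclusive and that all ratios $\tilde u_\alpha(\check x_{i,\alpha})/\tilde u_\alpha(0)$ have positive limits, so each B\^ocher weight $\tilde\Lambda_i$ is strictly positive. Finally, the paper avoids your Liouville step (which would require controlling possibly infinitely many singular points in $\mathbb{R}^n$) and instead applies the maximum principle to $X=\check G-\tilde\Lambda_1|x|^{2-n}$ on finite punctured balls $B_0(R)\setminus\bigcup B_{\check x_i}(\tilde\gamma)$, obtaining $X(0)\ge -\tilde\Lambda_1 R^{2-n}$ and letting $R\to\infty$; a second pass then yields $X(0)>0$, contradicting $X(0)\le 0$ from Proposition~\ref{p64}. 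The endgame matches yours, but the route to it requires the Case~1/Case~2 dichotomy and the extra renormalisation you omit.
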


\begin{proof}[Proof of Proposition \ref{p65}] We proceed by contradiction and assume that $d_\alpha\to 0$ as $\alpha\to +\infty$. 
Then, $N_\alpha\geq 2$ for $\alpha$ large. We assume that the concentration points are ordered in such a way that \eqref{eq130} holds true. We set for $x\in B_0(\delta d_\alpha^{-1})$, $0<\delta< \frac{1}{2}i_g$ fixed,
\begin{equation}\label{eq131}
\begin{split}
&\check{u}_\alpha(x) = d_\alpha^{\frac{n-2}{2}} u_\alpha\left(\exp_{x_{1,\alpha}} (d_\alpha x) \right),\\
&\check{h}_\alpha(x) = h_\alpha\left(\exp_{x_{1,\alpha}}(d_\alpha x) \right),\\
&\check{g}_\alpha(x) = \left(\exp^\star_{x_{1,\alpha}} g \right) (d_\alpha x),
\end{split}
\end{equation}
where $h_\alpha$ is as in \eqref{h}.
We have $\check{g}_\alpha \to \xi$ in $C^{2}_{loc}(\mathbb{R}^n)$ as $\alpha\to +\infty$ since $d_\alpha \to 0$. Thanks to the first equation in \eqref{kgstab}, we 
also have that
\begin{equation}\label{eq132}
\Delta_{\check{g}_\alpha} \check{u}_\alpha +d_\alpha^2 \check{h}_\alpha \check{u}_\alpha =\check{u}_\alpha^{2^\star-1}
\end{equation}
in $B_0(\delta d_\alpha^{-1})$. For any $R>0$, we let $1\leq N_{R,\alpha} \leq N_\alpha$ be such that
\begin{equation*}
\begin{split}
&d_g(x_{1,\alpha},x_{i,\alpha})\leq R d_\alpha ~\text{for}~ 1\leq i\leq N_{R,\alpha}, \text{ and}\\
&d_g(x_{1,\alpha},x_{i,\alpha}) > R d_\alpha ~\text{for}~ N_{R,\alpha}+1\leq i\leq N_\alpha.
\end{split}
\end{equation*}
Such an $N_{R,\alpha}$ does exist thanks to \eqref{eq130}. We also have that $N_{R,\alpha}\geq 2$ for all $R>1$ and that $(N_{R,\alpha})_\alpha$ is bounded for all $R>0$ thanks to \eqref{eq129}. Indeed, suppose that there are $k_\alpha$ points $x_{i,\alpha}$, $i=1,...,k_\alpha$, such that $d_g(x_{1,\alpha},x_{i,\alpha})\leq R d_\alpha$. By \eqref{eq129} 
$$B_{x_{i,\alpha}}\left(\frac{d_\alpha}{2}\right)\cap B_{x_{j,\alpha}}\left(\frac{d_\alpha}{2} \right)=\emptyset $$
for all $i\neq j$. Then, 
$$\text{Vol}_g\left(B_{x_{1,\alpha}}\left(\frac{3R}{2} d_\alpha\right)\right)\geq \sum_{i=1}^{k_\alpha} \text{Vol}_g\left(B_{x_{i,\alpha}}\left(\frac{d_\alpha}{2} \right) \right) $$
and we get an upper bound for $k_\alpha$ depending only on $R$. In the sequel, we set
$$\check{x}_{i,\alpha}=d_\alpha^{-1} \exp_{x_{1,\alpha}}^{-1}(x_{i,\alpha}) $$
for all $1\leq i\leq N_\alpha$ such that $d_g(x_{1,\alpha},x_{i,\alpha})\leq \frac{i_g}{2}$. Thanks to \eqref{l672}, for any $R>1$, there exists $C_R>0$ such that
\begin{equation}\label{eq133}
\underset{\Omega_{R,\alpha}}{\sup} \check{u}_\alpha \leq C_R,
\end{equation}
where 
$$\Omega_{R,\alpha}=B_0(R)\backslash \cup_{i=1}^{N_{2R,\alpha}}B_{\check{x}_{i,\alpha}}\left(\frac{1}{R} \right).$$
As in Hebey \cite{HebeyBookZurich}, one easily gets that for any $R>1$, there exists $D_R>0$ such that
\begin{equation}\label{eq134} 
\|\nabla \check{u}_\alpha \|_{L^\infty(\Omega_{R,\alpha})}\leq D_R \underset{\Omega_{R,\alpha}}{\sup} \check{u}_\alpha \leq D_R^2 \underset{\Omega_{R,\alpha}}{\inf} \check{u}_\alpha.
\end{equation}
Assume first that, for some $R>0$, there exists $1\leq i \leq N_{R,\alpha}$ such that 
\begin{equation}\label{eq135}
\check{u}_\alpha(\check{x}_{i,\alpha})=O(1).
\end{equation}
Since the first two equations of \eqref{hyp} are satisfied by the sequences $x_\alpha=x_{i,\alpha}$ and $\rho_\alpha=\frac{1}{8} d_\alpha$, it follows from Lemma \ref{l61} that the last equation in \eqref{hyp} cannot hold and thus that $(\check{u}_\alpha)_\alpha$ is uniformly bounded in $B_{\check{x}_{i,\alpha}}\left(\frac{3}{4} \right)$. In particular, by standard elliptic theory, thanks to \eqref{eq132} and the sequence $(h_\alpha)_\alpha$ being bounded in $L^\infty$, $(\check{u}_\alpha)_\alpha$ is uniformly bounded in $C^1(B_{\check{x}_{i,\alpha}}\left(\frac{1}{2} \right))$. By \eqref{l671}, assuming $i \not= 1$, 
we have that
$$|\check{x}_{i,\alpha}|^{\frac{n-2}{2}}\check{u}_\alpha(\check{x}_{i,\alpha})\geq 1$$
and we get the existence of some $\delta_i>0$ such that
$$\check{u}_\alpha\geq \frac{1}{2}|\check{x}_{i,\alpha}|^{1-\frac{n}{2}}\geq \frac{1}{2} R^{1-\frac{n}{2}} $$
in $B_{\check{x}_{i,\alpha}}(\delta_i)$. If $i=1$, applying \eqref{l671} with $i=1$ and $j=2$, we get that $\check{u}_\alpha(\check{x}_{1,\alpha}) \ge 1$, and the above inequality 
remains true for $R > 1$.
Assume now that, for some $R>0$, there exists $1\leq i\leq N_{R,\alpha}$ such that
\begin{equation}\label{eq136}
\check{u}_\alpha(\check{x}_{i,\alpha})\to +\infty
\end{equation}
as $\alpha\to +\infty$. Then, \eqref{hyp} is satisfied by the sequence $x_\alpha=x_{i,\alpha}$ and $\rho_\alpha=\frac{1}{8} d_\alpha$, and it follows from Proposition \ref{p64} that the sequence $(\check{u}_\alpha(\check{x}_{i,\alpha})\check{u}_\alpha)_\alpha$ is bounded in 
$$\check{\Omega}_{\alpha}=B_{\check{x}_{i,\alpha}}(\tilde{\delta}_i)\backslash B_{\check{x}_{i,\alpha}}(\frac{\tilde{\delta}_i}{2})
$$
for some $\tilde{\delta}_i>0$. Thus, using the Harnack type part of \eqref{eq134}, we can deduce that these two situations are mutually exclusive in the sense that either \eqref{eq135} holds true for all $i$ or \eqref{eq136} holds true for all $i$. We can thus split the conclusion of the proof into two cases. 
In case 1 : we assume that there exist $R>0$ and $1\leq i \leq N_{R,\alpha}$ such that $\check{u}_\alpha(\check{x}_{i,\alpha})=O(1)$. Then, thanks to the above discussion, we get that
$$\check{u}_\alpha(\check{x}_{j,\alpha})=O(1) $$
for all $1\leq j \leq N_{R,\alpha}$ and all $R>0$. Arguing as above and using \eqref{eq133} and \eqref{eq134}, it follows that $(\check{u}_\alpha)_\alpha$ is uniformly bounded in $C^1_{loc}(\mathbb{R}^n)$. Thus, by \eqref{eq132} and by 
elliptic theory, there exists a subsequence of $(\check{u}_\alpha)_\alpha$ which converges in $C^1_{loc}(\mathbb{R}^n)$ to some $\check{u}$ solution of 
$$\Delta \check{u}=\check{u}^{2^\star -1} $$
in $\mathbb{R}^n$. Still thanks to the above discussion, we know that $\check{u}\not\equiv 0$. Moreover, $\check{u}$ possesses at least two critical points, namely $0$ and some $\check{x}_{2}$, 
$\vert\check{x}_{2}\vert = 1$, 
limit of a subsequence of $(\check{x}_{2,\alpha})_\alpha$. This is absurd thanks to the classification result of Caffarelli, Gidas and Spruck \cite{CaffarelliGidas}. 
In case 2 : we assume that there exist $R>0$ and $1\leq i\leq N_{R,\alpha}$ such that $\check{u}_\alpha(\check{x}_{i,\alpha})\to +\infty$ as $\alpha\to +\infty$. Then, thanks to the above discussion, we get that $\check{u}_\alpha(\check{x}_{j,\alpha})\to +\infty$ as $\alpha\to +\infty$, for all $1\leq j\leq N_{R,\alpha}$ and all $R>0$. By \eqref{eq132}, we have that 
$$\Delta_{\check{g}_\alpha} \check{v}_\alpha+ d_\alpha^2 \check{h}_\alpha \check{v}_\alpha =\frac{1}{|\check{u}_\alpha(0)|^{2^\star-2}} \check{v}_\alpha^{2^\star -1},$$
where $\check{v}_\alpha=\check{u}_\alpha(0) \check{u}_\alpha$. Applying Proposition \ref{p64} with the above discussion on the one hand, and standard elliptic theory with \eqref{eq134} on the other hand, we get from the preceding equation for $\check{v}_\alpha$ that, after passing to a subsequence,
$$\check{u}_\alpha(0) \check{u}_\alpha\to \check{G} $$
in $C^1_{loc}(\mathbb{R}^n\backslash \{\check{x}_i \}_{i\in I})$ as $\alpha\to +\infty$, where 
$$I=\left\{1,...,\lim_{R\to +\infty}\lim_{\alpha\to +\infty} N_{R,\alpha}\right\} $$
and, for any $R>0$,
$$\check{G}(x)=\sum_{i=1}^{\tilde{N}_R} \frac{\tilde{\Lambda}_i}{|x-\check{x}_{i}|^{n-2}}+\check{H}_R(x) $$
in $B_{0}(R)$, where $1\leq \tilde{N}_R\leq N_{2R}$ is such that $|\check{x}_{\tilde{N}_R}|\leq R$ and $|\check{x}_{\tilde{N}_R+1}|> R$, and where $N_{2R,\alpha}\to N_{2R}$ as $\alpha\to +\infty$. Here, up to a subsequence, we assume $\check{x}_{i,\alpha}\to \check{x}_i$ as $\alpha\to +\infty$, for all $i\in I$. By \eqref{eq134},
$$\frac{\check{u}_\alpha(\check{x}_{i,\alpha})}{\check{u}_\alpha(0)} \to \mu_i$$
as $\alpha \to +\infty$ for some $\mu_i > 0$. In particular, 
the $\tilde{\Lambda}_i$'s are positive real numbers and $\check{H}_R$ is a harmonic function in $B_0(R)$. We have that
$$\check{H}_{R_1}(x)-\check{H}_{R_2}(x)=\sum_{i=\tilde{N}_{R_1}+1}^{\tilde{N}_{R_2}}\frac{\tilde{\Lambda}_i}{|x-\check{x}_i|^{n-2}} $$
for all $0<R_1<R_2$. We can write that
$$\check{G}(x)=\frac{\tilde{\Lambda}_1}{|x|^{n-2}}+X(x) $$
where for any $R>1$ and any $x\not\in\{\check{x}_i, i\in I\}$,
$$X(x) = \sum_{i=2}^{\tilde{N}_R} \frac{\tilde{\Lambda}_i}{|x-\check{x}_i|^{n-2}}+\check{H}_R(x).$$
 For $R>1$, we set $\Omega_{R,\tilde{\gamma}}=B_0(R)\backslash \cup_{i=2}^{\check{N}_R}B_{\check{x}_{i}}(\tilde{\gamma})$. 
 Since $R$ is fixed, we can choose $0<\tilde{\gamma}\ll 1$ such that $X$ attains its minimum over $\partial \Omega_{R,\tilde{\gamma}}$ in $\partial B_0(R)$. Hence, using the maximum principle for the harmonic function $X$ in $\Omega_{R,\tilde{\gamma}}$ and that $\check{G}\geq 0$, we get $X(0)\geq -\tilde{\Lambda}_1 R^{2-n}$. Since $R > 1$ is arbitrary, $X(0)\geq 0$. By Proposition \ref{p64}, we now get that $X(0)=0$.  For $R>1$ fixed, we let  $\tilde{\Omega}_{R,\tilde{\gamma}}=B_0(R)\backslash \cup_{i=3}^{\check{N}_R}B_{\check{x}_{i}}(\tilde{\gamma})$. We can choose again $0<\tilde{\gamma}\ll 1$ such that $X-\tilde{\Lambda}_2/|.-\check{x}_2|^{n-2}$ attains its minimum over $\partial\tilde{\Omega}_{R,\tilde{\gamma}}$ in $\partial B_0(R)$. Hence, using the maximum principle for the harmonic function $X-\tilde{\Lambda}_2/|.-\check{x}_2|^{n-2}$ in $\tilde{\Omega}_{R,\tilde{\gamma}}$ and that $\check{G}\geq 0$, we get that
 $$X(0)\geq \tilde{\Lambda}_2-\frac{\tilde{\Lambda}_1}{R^{n-2}}-\frac{\tilde{\Lambda}_2}{(R-1)^{n-2}}. $$
 Choosing $R\gg 1$ sufficiently large, we get that $X(0)>0$ and this is in contradiction with $X(0)=0$. This ends the proof of the Proposition \ref{p65}.
 \end{proof}

We are now in position to prove the a priori bound property in Theorem \ref{CritThm}. This is the subject of what follows.

\begin{proof}[Proof of the a priori bound property in Theorem \ref{CritThm}.] By Druet and Hebey \cite{kgmp3}, and Hebey and Truong \cite{kgmp4}, we only need 
to address the cases $n \ge 5$. 
Then the proof mixes the results of Propositions \ref{p64} and \ref{p65}. By Proposition \ref{p65}, $M$ being compact, $(N_\alpha)_\alpha$ is a bounded sequence. Up to a subsequence, we can assume that $N_\alpha=N$ for all $\alpha$ and some $N\in \mathbb{N}^\star$. Let $(x_\alpha)_\alpha$ be a sequence of maximal points of $u_\alpha$. By \eqref{maxu}, $u_\alpha(x_\alpha)\to +\infty$ as $\alpha\to +\infty$ and we get by \eqref{l672} that $d_g(x_{i,\alpha},x_\alpha)\to 0$ as $\alpha\to +\infty$ for some $i$. Then, by Proposition \ref{p65}, noting that
\begin{equation}\label{end1}
d_g(x_\alpha,x)u_\alpha^{\frac{2}{n-2}}(x) \le d_g(x_\alpha,x_{i,\alpha})u_\alpha^{\frac{2}{n-2}}(x_\alpha) + d_g(x_{i,\alpha},x)u_\alpha^{\frac{2}{n-2}}(x)~,
\end{equation}
we get that \eqref{hyp} holds true with the $x_\alpha$'s and $\rho_\alpha=\bar{\delta}$, for some $\bar{\delta}>0$. But this contradicts Proposition \ref{p64} for which $\rho_\alpha\to 0$ as $\alpha\to+\infty$. In particular, there does not exists a sequence $((u_\alpha,v_\alpha))_\alpha$ of smooth positive solutions of \eqref{kgstab} such that \eqref{maxu} holds true. Standard elliptic theory concludes the proof of 
 the a priori bound property in Theorem \ref{CritThm}.
\end{proof}

 \section{A remark on \eqref{prel1}}\label{RkPotentials}

The assumption \eqref{prel1} in Theorem \ref{CritThm} is sharp in the following sense : if we take for $(M,g)$ the standard sphere $S^n$ and if we consider the 
first equation of \eqref{kg} with $\omega=0$ and $m_0^2=\frac{n(n-2)}{4}=\frac{(n-2)}{4(n-1)}S_g$, then \eqref{prel1} is not satisfied, and 
there are indeed blowing-up solutions $(u_\alpha,v_\alpha)$ for the system, where $u_\alpha$ is given by \eqref{bullesphere} and $v_\alpha=\Phi(u_\alpha)$, with $\Phi$ as in \eqref{eqphi}. Yet, we have to be careful generally speaking about the necessity of an assumption telling that the potential needs to sit below the geometric threshold $\frac{n-2}{4(n-1)} S_g$ to get the existence of positive solutions and a priori bounds, when this potential depends on the solution, as in this paper. This is stated more precisely in the following proposition, which is proved in this section. 

\begin{prop}\label{necess}
  Let $(M,g)$ be a closed Riemannian $n$-manifold ($n\geq 5$) and $q$, $m_0$, $m_1>0$, $\theta\in(0,1)$ be given. We can build a continuous map $\bar{h} : \mathbb{R}\times C^{1,\theta}\to C^{0,\theta}$ satisfying that
  \begin{equation*}
  \begin{split}
&(i)~ \bar{h}_\omega(u) > \frac{n-2}{4(n-1)} S_g~\hbox{somewhere in}~M~\hbox{for all}~\omega~\text{and all}~u,\\
&(ii)~  \bar{h}_\omega(u)~\hbox{is nonnegative nonzero for all}~\omega~\text{and all}~u,\\
 &(iii) ~\bar{h}~\text{is uniformly bounded}\hbox{ in subsets}~K\times C^{1,\theta}~\text{for}~K\subset \mathbb{R}~\text{compact},\\
 &(iv)~ \hbox{the system}~�\eqref{55554Syst}~\hbox{below admits at least a positive constant solution},
  \end{split}
  \end{equation*}
and satisfying that, for all $\beta\in(0,1)$, there exists $C>0$ such that for all $\omega\in \mathbb{R}$ and for any positive solution $(u,v) \in C^2$ of
\begin{equation}\label{55554Syst}
\begin{cases}
\Delta_g u+ \bar{h}_\omega(u)u=u^{2^\star-1}+\omega^2(1-qv)^2 u\\
\Delta_g v+(m_1^2+q^2u^2) v= q u^2~,
\end{cases}
\end{equation}
 we have that $\|u\|_{C^{1,\beta}}+\|v\|_{C^{1,\beta}}\leq C$.
 \end{prop}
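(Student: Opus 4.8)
The plan is to produce $\bar h$ by hand so that it carries the required constant solution, stays strictly above the Aubin threshold $\tfrac{n-2}{4(n-1)}S_g$ at one point for \emph{every} $u$, and yet, at the scale of a hypothetical concentration, behaves like a potential lying \emph{below} that threshold in the precise sense detected by the Pohozaev identity of Proposition \ref{p64}. Fix $c_\ast>0$ with $c_\ast^{2^\star-2}>\max_M\tfrac{n-2}{4(n-1)}S_g$, set $\tau_\ast=\tfrac{m_1^2}{m_1^2+q^2c_\ast^2}$, $v_\ast=\tfrac{qc_\ast^2}{m_1^2+q^2c_\ast^2}$ (so $1-qv_\ast=\tau_\ast$), and pick a bounded strictly increasing ``shape profile'' $\phi_0:[0,1]\to\mathbb{R}$ vanishing exactly at $t_0:=2^{-(n-2)/2}$ --- the value of the standard bubble $u_0$ at $|x|=\sqrt{n(n-2)}$, which is exactly where the Pohozaev density $u_0\langle\nabla u_0,x\rangle_\xi+\tfrac{n-2}{2}u_0^2=\tfrac{n-2}{2}\bigl(1-\tfrac{|x|^2}{n(n-2)}\bigr)\bigl(1+\tfrac{|x|^2}{n(n-2)}\bigr)^{-(n-1)}$ changes sign. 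With a large constant $\gamma$ and small constants $0<\varepsilon_0<\delta_0$, set
\begin{equation*}
\bar h_\omega(u)=\zeta(u)\bigl(c_\ast^{2^\star-2}+\omega^2\tau_\ast^2\bigr)+\bigl(1-\zeta(u)\bigr)\max\Bigl(\tfrac{n-2}{4(n-1)}S_g+\delta_0+\gamma\,\lambda(u)\,\phi_0\bigl(\tfrac{u^+}{\|u\|_{L^\infty}}\bigr),\,\varepsilon_0\Bigr),
\end{equation*}
where $\zeta(u)=\chi_0(\|u-c_\ast\|_{C^0})$ is a cutoff equal to $1$ near $u\equiv c_\ast$ and $0$ away from it, and $\lambda(u)=\chi_1\bigl(\|u\|_{L^\infty}^2\operatorname{Vol}_g(M)/\|u\|_{L^2}^2\bigr)\chi_2(\|u\|_{L^\infty})$ is a ``concentration detector'' equal to $0$ when $u$ is nearly constant or of small amplitude and to $1$ on concentrating families (the ratio $\|u\|_{L^\infty}^2/\|u\|_{L^2}^2$ being $\operatorname{Vol}_g(M)^{-1}$ on constants and blowing up along any bubble). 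When $\min_MS_g<0$ the construction simplifies drastically, since then (i) holds for every nonnegative $\bar h$.

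Checking (i)--(iv) is then routine. For (iv) one takes $u\equiv c_\ast$, $v\equiv v_\ast$: as $\zeta(c_\ast)=1$, the first equation reduces to $c_\ast^{2^\star-2}+\omega^2\tau_\ast^2=c_\ast^{2^\star-2}+\omega^2(1-qv_\ast)^2$, which holds, and the second is satisfied by the choice of $v_\ast$. Items (ii)--(iii) follow from $\varepsilon_0\le\bar h_\omega(u)\le c_\ast^{2^\star-2}+\omega^2\tau_\ast^2+\max_M\tfrac{n-2}{4(n-1)}S_g+\delta_0+\gamma\|\phi_0\|_\infty$, the upper bound being \emph{independent of $\omega$ off any $C^0$-neighbourhood of $c_\ast$}. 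For (i): if $\lambda(u)=0$ then, away from $c_\ast$, $\bar h_\omega(u)=\tfrac{n-2}{4(n-1)}S_g+\delta_0>\tfrac{n-2}{4(n-1)}S_g$ everywhere, while near $c_\ast$ the potential exceeds $c_\ast^{2^\star-2}>\max_M\tfrac{n-2}{4(n-1)}S_g$; if $\lambda(u)>0$, then $u$ is peaked and, since $\phi_0>0$ on $(t_0,1]$, one has $\bar h_\omega(u)>\tfrac{n-2}{4(n-1)}S_g$ at any point where $u/\|u\|_{L^\infty}>t_0$, in particular near the maximum of $u$.

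For the a priori bound, by the maximum principle on the second equation and elliptic bootstrap on the first (data in $L^\infty$ once $\|u\|_{L^\infty}$ is controlled), it suffices to bound $\|u\|_{L^\infty}$ uniformly in $\omega\in\mathbb{R}$. Suppose, for contradiction, that $((u_\alpha,v_\alpha))_\alpha$ solves \eqref{55554Syst} at phases $\omega_\alpha$ with $\|u_\alpha\|_{L^\infty}\to+\infty$; then $u_\alpha$ is far from $c_\ast$ in $C^0$, so $\zeta(u_\alpha)=0$ and $\|\bar h_{\omega_\alpha}(u_\alpha)\|_{L^\infty}$ is bounded independently of $\alpha$ and $\omega_\alpha$. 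If the detector ratio $\kappa(u_\alpha)$ stays bounded then $u_\alpha\ge\tfrac12\|u_\alpha\|_{L^\infty}$ on a set of measure bounded below, whereas integrating the first equation gives $\int_Mu_\alpha^{2^\star-1}\le\int_M\bar h_{\omega_\alpha}(u_\alpha)u_\alpha\le C\int_Mu_\alpha$, hence $\int_Mu_\alpha^{2^\star-1}\le C'$ --- incompatible for $\|u_\alpha\|_{L^\infty}$ large. So $\lambda(u_\alpha)=1$; assuming $\omega_\alpha$ bounded (see below), $h_\alpha:=\bar h_{\omega_\alpha}(u_\alpha)-\omega_\alpha^2(1-qv_\alpha)^2$ is bounded in $L^\infty$, so the machinery of Sections \ref{BlUpThryPtCtr}--\ref{BlUpThrySharpAsy} applies to $\Delta_gu_\alpha+h_\alpha u_\alpha=u_\alpha^{2^\star-1}$. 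By Lemma \ref{controlv2}, $v_\alpha\to\tfrac1q$ at the scale of concentration, so (taking $x_\alpha$ a maximum point of $u_\alpha$) the $\bar h_{\omega_\alpha}(u_\alpha)$-term in \eqref{eq112} rescales to $\mu_\alpha^2\int_{\mathbb{R}^n}\mathfrak h(x)\bigl(u_0\langle\nabla u_0,x\rangle_\xi+\tfrac{n-2}{2}u_0^2\bigr)dx+o(\mu_\alpha^2)$ with $\mathfrak h(x)=\max\bigl(\tfrac{n-2}{4(n-1)}S_g(\bar x_0)+\delta_0+\gamma\phi_0(u_0(x)),\varepsilon_0\bigr)$; since $\phi_0(u_0(x))$ and the Pohozaev density have the \emph{same sign pointwise} (both changing sign at $u_0=t_0$), this contributes an ``effective mass'' equal to $\tfrac{n-2}{4(n-1)}S_g(\bar x_0)-c(\gamma)$ with $c(\gamma)\to+\infty$ as $\gamma\to+\infty$. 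Fixing $\gamma$ large enough that $c(\gamma)>0$ uniformly, the computation of Proposition \ref{p64} goes through with $\mathcal H(0)\le0$, whence Proposition \ref{p65} and the argument of Section \ref{APriBdCritCase} yield the contradiction.

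I expect the main obstacle to be the uniformity over all of $\mathbb{R}$, i.e.\ the case $|\omega_\alpha|\to+\infty$. Evaluating the first equation at the minimum of $u_\alpha$ gives $\omega_\alpha^2(1-qv_\alpha)^2\le\bar h_{\omega_\alpha}(u_\alpha)\le C$ there, while the maximum principle on the second equation gives $1-qv_\alpha\ge\tfrac{m_1^2}{m_1^2+q^2\|u_\alpha\|_{L^\infty}^2}$ everywhere; combined, these force $\|u_\alpha\|_{L^\infty}^2\gtrsim|\omega_\alpha|$, so a solution at large $|\omega|$ must have large amplitude, and one must show no such solution exists --- the spread-out alternative is killed as above, while in the concentrating alternative $h_\alpha$ is no longer globally bounded below ($h_\alpha\ge-\omega_\alpha^2$ only), and one has to exploit that $\bar h_{\omega_\alpha}(u_\alpha)$ is $\omega$-uniformly bounded off a neighbourhood of $c_\ast$, that $\omega_\alpha^2\int_M(1-qv_\alpha)^2u_\alpha=O(1)$ by the integrated first equation (so the negative part of $h_\alpha$ is $u_\alpha$-negligible in $L^1$ and, by Lemma \ref{controlv2} and \eqref{EstPhiHolderType}, concentrated away from where $u_\alpha$ is large), and that the minimum of $u_\alpha$ --- a point where necessarily $h_\alpha>0$, hence where $v_\alpha$ is close to $\tfrac1q$ and $u_\alpha$ large --- is at the same time $\le\tfrac1{\operatorname{Vol}_g(M)}\int_Mu_\alpha\le C$, a contradiction. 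Carrying this out rigorously, thereby reconciling the growth $\bar h_\omega(c_\ast)\sim\omega^2$ forced by (iv) with an $\omega$-uniform bound for all other solutions, is the delicate heart of the proposition.
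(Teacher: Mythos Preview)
Your construction is creative but diverges from the paper's at two structural points, and both divergences create real gaps.

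\medskip\noindent\textbf{The $\omega$-dependence.} The paper's device is to set
\[
\bar h_\omega(u)=\check h(u)+\omega^2\bigl(1-q\Phi(u)\bigr)^2,
\]
so that, since $v=\Phi(u)$ for solutions, the first equation of \eqref{55554Syst} collapses to the $\omega$-free equation $\Delta_g u+\check h(u)u=u^{2^\star-1}$. This makes the a priori bound automatic over all of $\mathbb{R}$ with no further work. You do not use this trick, and your last paragraph --- which you rightly flag as ``the delicate heart of the proposition'' --- does not close. Concretely, from the minimum principle at $x_{\min}$ you only get $h_\alpha(x_{\min})\ge0$, hence $(1-qv_\alpha(x_{\min}))^2\le C/\omega_\alpha^2$; but $v_\alpha$ close to $1/q$ at a single point does \emph{not} force $u_\alpha$ to be large there (the second equation is nonlocal in $u_\alpha$), so the claimed contradiction with $\min u_\alpha\le C$ does not follow. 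Moreover, with $h_\alpha=\bar h_{\omega_\alpha}(u_\alpha)-\omega_\alpha^2(1-qv_\alpha)^2$ unbounded below, the pointwise $C^0$-theory of Section~\ref{BlUpThryPtCtr} (Lemma~\ref{l61}, Proposition~\ref{p61}) is no longer available, and your appeal to Sections~\ref{BlUpThryPtCtr}--\ref{BlUpThrySharpAsy} is unjustified in that regime.

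\medskip\noindent\textbf{Scale-invariance at local concentration points.} The paper's $\check h$ is built from the \emph{pointwise, scale-homogeneous} quantity
\[
\sqrt{\tfrac{n-2}{n}}\,|u(y)|^{n/(n-2)}-|\nabla u(y)|,
\]
both terms of which scale like $\mu_\alpha^{-n/2}$ under the bubble rescaling $\mu_\alpha^{(n-2)/2}u_\alpha(\exp_{x_\alpha}(\mu_\alpha\cdot))\to u_0$. Hence, at \emph{any} blow-up point satisfying \eqref{hyp} (not only at the global maximum), $\check h(u_\alpha)$ rescales to $\tilde K\,\mathds{1}_{B_0(\sqrt{n(n-2)})}$ a.e., and the Pohozaev computation yielding \eqref{eq10lf} is identical at every local bubble. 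Your profile $\phi_0\bigl(u^+/\|u\|_{L^\infty}\bigr)$ is tied to the \emph{global} maximum of $u_\alpha$. At a local concentration point $x_{i,\alpha}$ with $u_\alpha(x_{i,\alpha})/\|u_\alpha\|_{L^\infty}\to c<1$, the rescaled ratio $u_\alpha/\|u_\alpha\|_{L^\infty}$ tends to $c\,u_0$, so the zero of $\phi_0(c\,u_0(\cdot))$ no longer sits at $|x|=\sqrt{n(n-2)}$ and the sign-matching with the Pohozaev density $\Psi_0$ breaks; when $c\to0$ (a bubble well below the highest one) your truncation forces $\mathfrak h\equiv\varepsilon_0$ and you are back to needing $\varepsilon_0<\tfrac{n-2}{4(n-1)}S_g(\bar x_0)$, which fails whenever $S_g\le0$ somewhere. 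This undermines the multi-bubble step (Proposition~\ref{p65}), which applies Proposition~\ref{p64} precisely at such local points.

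\medskip In short, the paper resolves both issues simultaneously: the additive $\omega^2(1-q\Phi(u))^2$ cancels the $\omega$-term in the equation, and the choice of a pointwise, scale-invariant trigger makes the blow-up analysis identical at every concentration point. Your construction would need substantial repair on both fronts to go through.
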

 
 The first condition on $\bar{h}$ is the most significant ; the other ones are here to ensure that 
 our result has some interest. In our definition of $\bar{h}$, we let 
\begin{equation}\label{hcheck} 
 \bar{h}_\omega(u)=\check{h}(u)+\omega^2(1-qv)^2,
 \end{equation}
 where $v=\Phi(u)$, with $\Phi$ as in \eqref{eqphi}. Thus, proving Proposition \ref{necess} amounts to prove the existence of $\check{h} : C^{1,\theta}\to C^{0,\theta}$ satisfying conditions like $(i)$-$(iv)$, and the existence of $C > 0$ 
 such that for all positive solution $u \in C^2$ of
\begin{equation}\label{55554}
\Delta_g u+ \check{h}(u)u=u^{2^\star-1}
\end{equation}
 we have that $\|u\|_{C^{1,\beta}}\leq C$. By elliptic theory, every nonnegative nonzero weak solution in $H^1$ of \eqref{55554} is actually positive and in $C^2$. As the proof of Proposition \ref{necess} below shows, we may even require that $\|\check{h}(u)\|_{C^0}=\tilde{K}$ for all $u\in C^1$, for some $\tilde{K}\gg1$ arbitrarily large and still obtain uniform bounds for the positive $C^2$-solutions of \eqref{55554}.

 \begin{proof}[Proof of Proposition \ref{necess}] 
Let $q$, $m_0$, $m_1>0$ be given. $(M,g)$ being a given closed Riemannian $n$-manifold with $n\geq 5$, we define here a map $\bar{h} : \mathbb{R}\times C^{1,\theta}\to C^{0,\theta}$ such that Proposition \ref{necess} is true. Let $\omega\in \mathbb{R}$. Note first that the function
\begin{equation*}
\Psi_0 :\left\{
\begin{array}{rcl}
\mathbb{R}^n &\to & \mathbb{R}\\
x & \mapsto & \frac{n-2}{2}u_0^2(x)+\langle x,\nabla u_0(x)\rangle_{\xi}  u_0(x),
\end{array}
\right.
\end{equation*}
is positive in $B_0\left(\sqrt{n(n-2)}\right)$, where $u_0$ is as in \eqref{u0}.
We choose any $\tilde{K}>0$ such that
\begin{equation}\label{condcontrex}
\tilde{K}>\frac{\int_{\mathbb{R}^n} u_0^2 dx}{\int_{B_0\left(\sqrt{n(n-2)}\right)}\Psi_0(x) dx}\left(-\frac{n-2}{4(n-1)}\min_{M}S_g\right).
\end{equation}
In particular, we may also ask that $\tilde{K}>\frac{n-2}{4(n-1)}\max_{M}S_g$ and we assume from now on that this is true. We pick some $\varphi\in C^\infty(\mathbb{R})$ satisfying $0\leq \varphi\leq \tilde{K}$, $\varphi(z)=\tilde{K}$ for $z\geq 0$ and $\varphi(z)=0$ for $z\leq -1$. We define, for $y\in M$ 
\begin{equation}\label{barhdef}
\begin{split}
&\check{h}(u)(y)=\varphi\left(\sqrt{\frac{n-2}{n}}|u(y)|^{\frac{n}{n-2}}-|\nabla u(y)| \right),\\
&\bar{h}_\omega(u)(y)=\check{h}(u)(y)+ \omega^2(1-q \Phi(u))^2.
 \end{split}
\end{equation}
We have that $\bar{h}_\omega(u)\geq 0$ for all $u\in C^1$ and that $\left(\tilde{K}^{\frac{n-2}{4}},\frac{q\tilde{K}^{\frac{n-2}{2}}}{m_1^2+q^2\tilde{K}^{\frac{n-2}{2}}}\right)$ is a constant positive solution of \eqref{55554Syst}. Easy computations, using the mean value theorem, give that $\check{h} : C^{1,\theta}\to C^{0,\theta}$ is continuous, and then, $\bar{h} : \mathbb{R}\times C^{1,\theta}\to C^{0,\theta}$ is also continuous by elliptic theory. To see that $\bar{h}$ satisfies $(i)$-$(iv)$ of Proposition \ref{necess}, it is sufficient to check that Step \ref{Step2lf} holds true.

\begin{Step}\label{Step2lf}
There holds $\sup_{M}\bar{h}_\omega(u)\in\left[\tilde{K},\tilde{K}+\omega^2\right]$. In particular, $\bar{h}_\omega(u)\not\equiv 0$ and $\sup_{M}\bar{h}_\omega(u)>\frac{n-2}{4(n-1)}\max_{M}S_g.$
\end{Step}

\begin{proof}[Proof of Step \ref{Step2lf}.]  As $0\leq \Phi(u)\leq \frac{1}{q}$, it is sufficient to prove that 
\begin{equation}\label{unifbd}
\sup_M \check{h}(u)=\tilde{K}~,
\end{equation}
if $u\in C^{1,\theta}$. Indeed, for any critical point $x$ of $u$, $\check{h}(u)(x)=\tilde{K}$ and \eqref{unifbd} is true. Step \ref{Step2lf} is proved.
 \end{proof}
 
  Now we prove the uniform bounds in Proposition \ref{necess}. Assume by contradiction that $((u_\alpha,v_\alpha))_\alpha$ is a blowing-up sequence of positive $C^2$ solutions of \eqref{55554Syst}, namely
\begin{equation}\label{Kstab}
\begin{cases}
\Delta_g u_\alpha+ \bar{h}_{\omega_\alpha}(u_\alpha)u_\alpha=u_\alpha^{2^\star-1}+\omega_\alpha^2(1-qv_\alpha)^2 u_\alpha\\
\Delta_g v_\alpha+(m_1^2+q^2u_\alpha^2) v_\alpha= q u_\alpha^2~,
\end{cases}
\end{equation}
where $(\omega_\alpha)_\alpha$ is a sequence of real numbers. Since $0\leq v_\alpha\leq \frac{1}{q}$, we have 
\begin{equation}\label{maxubis}
\max_M u_\alpha\to +\infty
\end{equation}
as $\alpha\to +\infty$. Once more, the first equation in \eqref{Kstab} can be written 
\begin{equation}\label{Kstabbis}
\Delta_g u_\alpha+ \check{h}(u_\alpha)u_\alpha=u_\alpha^{2^\star-1}
\end{equation}
where $\check{h}(u_\alpha)$ is as in \eqref{barhdef} and is independent of the physical parameters $m_0$, $m_1$, $q$ and $\omega_\alpha$. Replacing the system \eqref{kgstab} by \eqref{Kstab}, and thus $h_\alpha$ given in \eqref{h} by $\check{h}(u_\alpha)$, our a priori analysis of Section \ref{BlUpThryPtCtr} applies to the new sequence $((u_\alpha,v_\alpha))_\alpha$, since we only need there a $C^0$ bound on $(h_\alpha)_\alpha$. On the contrary, the arguments in Section \ref{BlUpThrySharpAsy} use the precise form of the potentials $h_\alpha$ which are replaced in our cases by the $\check{h}(u_\alpha)$'s, and need to be rewritten. First we prove that the following analogue of Proposition \eqref{p64} holds true for our sequence $((u_\alpha,v_\alpha))_\alpha$ of solutions of \eqref{Kstab}, while \eqref{prel1} is not satisfied, by $(i)$.

 \begin{prop}\label{p64bis}
 Let $(M,g)$ be a closed Riemannian $n$-manifold, $n\geq 5$, and $((u_\alpha,v_\alpha))_\alpha$ be a sequence of $C^2$ positive solutions of \eqref{Kstab} 
 such that \eqref{maxubis} holds true. 
 Let $(x_\alpha)_\alpha$ and $(\rho_\alpha)_\alpha$ be such that \eqref{hyp} holds true. Then $r_\alpha\to 0$ as $\alpha\to +\infty$,
 \begin{equation}\label{eqp642bis}
 r_\alpha= \rho_\alpha
 \end{equation}
 for all $\alpha$, and 
 \begin{equation}\label{eqp64bis}
 r_\alpha^{n-2}\mu_\alpha^{-\frac{n-2}{2}}u_\alpha\left(\exp_{x_\alpha}(r_\alpha x) \right)\to \frac{(n(n-2))^{\frac{n-2}{2}}}{|x|^{n-2}}+\mathcal{H}(x)
 \end{equation}
 in $C^1_{loc}\left(B_0(2)\backslash \{0\}\right)$ as $\alpha\to +\infty$, 
 where $\mu_\alpha$ is as in \eqref{mu}, $r_\alpha$ as in \eqref{eq40}, and $\mathcal{H}$ is a harmonic function in $B_0(2)$ which satisfies that $\mathcal{H}(0)\leq 0$.
\end{prop}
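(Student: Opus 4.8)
The plan is to run the argument of Proposition \ref{p64} essentially unchanged, the only genuinely new ingredient being the asymptotic evaluation of the potential term in the Riemannian Pohozaev identity, where the explicit formula \eqref{barhdef} for $\check h$ plays the role that \eqref{prel1} played before. Since \eqref{Kstabbis} is a stationary Schr\"odinger equation $\Delta_g u_\alpha+\check h(u_\alpha)u_\alpha=u_\alpha^{2^\star-1}$ whose potential is bounded in $L^\infty$ by Step \ref{Step2lf} (indeed $\|\check h(u_\alpha)\|_{C^0}=\tilde K$), all of Section \ref{BlUpThryPtCtr} applies verbatim to $((u_\alpha,v_\alpha))_\alpha$: Lemma \ref{l61} and Proposition \ref{p61} hold, and so do Lemma \ref{controlv} and Lemma \ref{l64}, the latter because it uses only the $L^\infty$-bound on the potential; note also that $v_\alpha=\Phi(u_\alpha)$ since $(u_\alpha,v_\alpha)$ solves the second equation of \eqref{Kstab}, so Lemma \ref{controlv2} may be invoked. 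Keeping the notation $\mu_\alpha$, $r_\alpha$, $\bar x_0$, $X_\alpha$, $\hat u_\alpha$, $\hat v_\alpha$, $\hat g_\alpha$, $\hat\Psi_\alpha$, $\Psi_\alpha$, $C_n=\int_{\mathbb R^n}u_0^2\,dx$ of Section \ref{BlUpThrySharpAsy} (and passing to a subsequence so that $x_\alpha\to\bar x_0$), I would write the Pohozaev identity of Hebey \cite{HebeyBookZurich} for \eqref{Kstabbis} on $B_{x_\alpha}(r_\alpha)$; because the potential is now simply $\check h(u_\alpha)$ there is no $\tilde{\mathcal R}_\alpha$ term and it reads
$$\int_{B_{x_\alpha}(r_\alpha)}\check h(u_\alpha)\,\Psi_\alpha\,dv_g+\frac{n-2}{4n}\int_{B_{x_\alpha}(r_\alpha)}(\Delta_g\,\mathrm{div}_g X_\alpha)u_\alpha^2\,dv_g=Q_{1,\alpha}-Q_{2,\alpha}+Q_{3,\alpha},$$
with $\Psi_\alpha$ as in \eqref{Psi1} and $Q_{1,\alpha},Q_{2,\alpha},Q_{3,\alpha}$ as in \eqref{DefQIALpha}. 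The second integral on the left equals $\mu_\alpha^2\big(\tfrac{n-2}{4(n-1)}S_g(\bar x_0)C_n+o(1)\big)$ by the second identity in \eqref{eqcrucial2}, whose proof uses only Proposition \ref{p61} and \eqref{estimX}; and one still has $Q_{2,\alpha}=o(\mu_\alpha^2)+o(\mu_\alpha^{n-2}r_\alpha^{2-n})$ from Lemma \ref{l64} and $Q_{3,\alpha}=o(\mu_\alpha^2)+o(\mu_\alpha^{n-2}r_\alpha^{2-n})$ as in \eqref{eq113}.

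The heart of the matter, and the step I expect to be the main obstacle, is to show that
$$\int_{B_{x_\alpha}(r_\alpha)}\check h(u_\alpha)\,\Psi_\alpha\,dv_g=\mu_\alpha^2\Big(\tilde K\int_{B_0(\sqrt{n(n-2)})}\Psi_0(x)\,dx+o(1)\Big),$$
where $\Psi_0$ is as in the proof of Proposition \ref{necess}. By Lemma \ref{l61}, $\tilde u_\alpha:=\mu_\alpha^{\frac{n-2}{2}}u_\alpha(\exp_{x_\alpha}(\mu_\alpha\cdot))\to u_0$ in $C^1_{loc}(\mathbb R^n)$, so the argument of $\varphi$ at $\exp_{x_\alpha}(\mu_\alpha x)$ equals $\mu_\alpha^{-n/2}\big(\sqrt{\tfrac{n-2}{n}}|\tilde u_\alpha(x)|^{\frac{n}{n-2}}-|\nabla\tilde u_\alpha(x)|_{\hat g_\alpha}\big)$, and a direct computation gives $\sqrt{\tfrac{n-2}{n}}u_0^{\frac{n}{n-2}}-|\nabla u_0|=\big(1+\tfrac{|x|^2}{n(n-2)}\big)^{-n/2}\big(\sqrt{\tfrac{n-2}{n}}-\tfrac{|x|}{n}\big)$, which is $>0$ for $|x|<\sqrt{n(n-2)}$ and $<0$ for $|x|>\sqrt{n(n-2)}$. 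Since $\mu_\alpha^{-n/2}\to+\infty$, $0\le\varphi\le\tilde K$, with $\varphi\equiv\tilde K$ on $\mathbb R_+$ and $\varphi\equiv0$ on $(-\infty,-1]$, dominated convergence yields $\check h(u_\alpha)(\exp_{x_\alpha}(\mu_\alpha\cdot))\to\tilde K\,\mathbf 1_{B_0(\sqrt{n(n-2)})}$ in $L^p_{loc}(\mathbb R^n)$ for every $p<\infty$ and a.e. Together with $\hat\Psi_\alpha\to u_0\langle\nabla u_0,x\rangle_\xi+\tfrac{n-2}{2}u_0^2=\Psi_0$ from \eqref{eq13}, this identifies the contribution of $B_{x_\alpha}(A\mu_\alpha)$, for fixed $A>\sqrt{n(n-2)}$, as $\mu_\alpha^2\big(\tilde K\int_{B_0(\sqrt{n(n-2)})}\Psi_0\,dx+o(1)\big)$, while the remaining piece is estimated, exactly as in \eqref{eq14}--\eqref{eq15}, by $\mu_\alpha^2\tilde K C\int_{|x|>A}|x|^{4-2n}\,dx$, which is finite and tends to $0$ as $A\to+\infty$ since $n\ge5$; letting $A\to+\infty$ after $\alpha\to+\infty$ gives the claim. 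On the intermediate "neck" region $\check h(u_\alpha)$ need not converge, but there only its bound by $\tilde K$ is used, which is why the argument goes through.

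Inserting the three asymptotics into the Pohozaev identity yields $Q_{1,\alpha}=\Gamma\mu_\alpha^2+o(\mu_\alpha^2)+o(\mu_\alpha^{n-2}r_\alpha^{2-n})$ with $\Gamma:=\tilde K\int_{B_0(\sqrt{n(n-2)})}\Psi_0\,dx+\tfrac{n-2}{4(n-1)}S_g(\bar x_0)C_n$, and the choice \eqref{condcontrex} of $\tilde K$ ensures $\Gamma\ge\tilde K\int_{B_0(\sqrt{n(n-2)})}\Psi_0\,dx+\tfrac{n-2}{4(n-1)}(\min_M S_g)C_n>0$ — this strict positivity is precisely what \eqref{prel1} supplied in Proposition \ref{p64}. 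From here the remainder of the proof of Proposition \ref{p64} transcribes word for word: comparing $Q_{1,\alpha}=\Gamma\mu_\alpha^2+o(\mu_\alpha^2)+o(\mu_\alpha^{n-2}r_\alpha^{2-n})$ with the rough bound $Q_{1,\alpha}=O(\mu_\alpha^{n-2}r_\alpha^{2-n})$ of \eqref{RoughEstQ1ALpha}, and using $n\ge5$, forces $r_\alpha\to0$; then \eqref{eq102}--\eqref{eq106} give $w_\alpha\to (n(n-2))^{\frac{n-2}{2}}|x|^{2-n}+\mathcal H(x)$ in $C^1_{loc}(B_0(2)\setminus\{0\})$ with $\mathcal H$ harmonic, where $w_\alpha$ is as in \eqref{defw1}; \eqref{eq116}--\eqref{eq118}, now with $\Gamma>0$ in place of $C_n\big(\tfrac{n-2}{4(n-1)}S_g(\bar x_0)-m_0^2\big)>0$, give $\mathcal H(0)\le0$; and the boundary-derivative condition \eqref{eq43} rules out $r_\alpha<\rho_\alpha$, so $r_\alpha=\rho_\alpha$. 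This would complete the proof of Proposition \ref{p64bis}.
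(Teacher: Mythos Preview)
Your proposal is correct and follows essentially the same approach as the paper's proof: both rewrite the Pohozaev identity for \eqref{Kstabbis}, identify the limit of $\check h(u_\alpha)(\exp_{x_\alpha}(\mu_\alpha\cdot))$ as $\tilde K\,\mathbf 1_{B_0(\sqrt{n(n-2)})}$ via the $C^1_{loc}$-convergence of Lemma \ref{l61} (this is the paper's Step \ref{Step1lf}), use this together with \eqref{eqp61} and dominated convergence to compute the potential integral as in \eqref{eq9lf}, and then invoke \eqref{condcontrex} to force the coefficient of $\mu_\alpha^2$ in $Q_{1,\alpha}$ to be strictly positive, after which the end of the proof of Proposition \ref{p64} transcribes verbatim. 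Your explicit $A$-splitting of the integral is a harmless variant of the paper's direct dominated-convergence argument, and your mention of Lemma \ref{controlv} is superfluous (the Pohozaev identity for \eqref{Kstabbis} does not involve $v_\alpha$) but not incorrect.
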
 

 We first investigate the asymptotic behaviour of $\left(\check{h}(u_\alpha)\right)_\alpha$  near the blow-up point $x_\alpha$.

 \begin{Step}\label{Step1lf}
 We define $\tilde{h}_\alpha$ for $x\in \mathbb{R}^n$ by
 \begin{equation}\label{eq6lf}
 \tilde{h}_\alpha(x)=\check{h}(u_\alpha)\left(\exp_{x_\alpha}(\mu_\alpha x) \right)
 \end{equation}
 where $\mu_\alpha$ is defined in \eqref{mu} and $x_\alpha$ as in \eqref{hyp}, for all $\alpha$. Then, 
 \begin{equation}\label{eq7lf}
 \tilde{h}_\alpha\to \tilde{K}\mathds{1}_{B_0\left(\sqrt{n(n-2)}\right)} \quad \text{a.e. in }\mathbb{R}^n
 \end{equation}
 as $\alpha\to +\infty$.
 \end{Step}
 
 \begin{proof}[Proof of Step \ref{Step1lf}.] Using \eqref{eql61}, we have
\begin{equation}\label{convlocfin}
\mu_\alpha^{\frac{n}{2}}\left(\sqrt{\frac{n-2}{n}}u_\alpha^{\frac{n}{n-2}}-|\nabla u_\alpha|  \right)\left(\exp_{x_\alpha}(\mu_\alpha x) \right)\to v_0(x)\left(\sqrt{n(n-2)}-|x| \right)
\end{equation} 
in $C^0_{loc}(\mathbb{R}^n)$ as $\alpha\to +\infty$, where $v_0(x)=\frac{1}{n}\left(1+\frac{|x|^2}{n(n-2)} \right)^{-\frac{n}{2}}$ for $x\in \mathbb{R}^n$.
   Coming back to \eqref{barhdef}, we get Step \ref{Step1lf}.
 \end{proof}
 
  \begin{proof}[Proof of Proposition \ref{p64bis}] 
We are in position to prove Proposition \ref{p64bis} by adapting the proof of Proposition \ref{p64}. We let $X_\alpha$ as in \eqref{eqX}. We apply again the Pohozaev identity given in Druet and Hebey \cite{2009} to $u_\alpha$ in $B_{x_\alpha}(r_\alpha)$, we get
\begin{equation}\label{Pohobis}
\begin{split}
&\int_{B_{x_\alpha}(r_\alpha)} \check{h}(u_\alpha)\left\{u_\alpha X_\alpha(\nabla u_\alpha)+\frac{n-2}{2n}(\text{div}_g X_\alpha)u_\alpha^2 \right\} dv_g\\
&= -\frac{n-2}{4n}\int_{B_{x_\alpha}(r_\alpha)}\left( \Delta_g \text{div}_g X_\alpha \right) u_\alpha^2 dv_g+Q_{1,\alpha}-Q_{2,\alpha}+Q_{3,\alpha}~,
\end{split}
\end{equation}
where $Q_{i,\alpha}$, $i=1,2,3$, is given by \eqref{DefQIALpha}. Up to a subsequence, $x_\alpha\to \bar{x}_0$ as $\alpha\to +\infty$. For the right hand side of \eqref{Pohobis}, the second estimate in \eqref{eqcrucial2} and estimates \eqref{eql64} and \eqref{eq113} hold true using as above \eqref{eql61}, \eqref{eq41}, \eqref{eqp61} and  \eqref{estimX}. Concerning the left hand side of \eqref{Pohobis}, we get from \eqref{eql61}, \eqref{eqp61}, the dominated convergence theorem and \eqref{eq7lf} that
 \begin{equation}\label{eq9lf}
 \begin{split}
 &\int_{B_{x_\alpha}(r_\alpha)} \check{h}(u_\alpha)\left\{u_\alpha X_\alpha(\nabla u_\alpha)+\frac{n-2}{2n} \text{div}_g(X_\alpha) u_\alpha^2\right\} dv_g\\
 &= \mu_\alpha^{2}\left( \tilde{K} \int_{B_0\left(\sqrt{n(n-2)}\right)} \Psi_0 dx+o(1)\right).
 \end{split}
 \end{equation}
 Here we used that $n\geq 5$ and that $\left(\check{h}(u_\alpha)\right)_\alpha$ is bounded in $L^\infty$ by \eqref{unifbd}. Plugging these estimates in \eqref{Pohobis}, we get that
  \begin{equation}\label{eq10lf}
 \begin{split}
 Q_{1,\alpha} &=~~\mu_\alpha^2 \left(\frac{n-2}{4(n-1)} S_g(\bar{x}_0) \int_{\mathbb{R}^n} u_0^2 dx+\tilde{K}\int_{B_0(\sqrt{n(n-2)})} \Psi_0(x) dx \right) \\
 &~~+o(\mu_\alpha^2)+o(\mu_\alpha^{n-2}r_\alpha^{2-n})\\
 &\geq \eta \mu_\alpha^2 +o(\mu_\alpha^2)+o(\mu_\alpha^{n-2}r_\alpha^{2-n})
 \end{split}
 \end{equation}
 for some $\eta>0$. Indeed, by \eqref{condcontrex}, such a $\eta>0$ does exist. Since \eqref{RoughEstQ1ALpha} holds true by \eqref{eqp61} and \eqref{estimX}, we get from \eqref{eq10lf} and since $n\geq 5$ that
  \begin{equation}\label{convrbis}
 r_\alpha\to 0
 \end{equation}
 as $\alpha\to +\infty$. Defining $w_\alpha$ as in \eqref{defw1}, we get that \eqref{eq104}, \eqref{eq105} and \eqref{eq106} hold true. We then have \eqref{eqp64bis} for some $\mathcal{H}$ harmonic in $B_0(2)$. In the end, \eqref{RoughEstQ1ALpha} can be improved in \eqref{eq116} and combined with \eqref{eq10lf} to obtain 
 \begin{equation}\label{eq117bis}
\frac{1}{2} {(n-2)^2} \omega_{n-1} \Lambda \mathcal{H}(0)\leq -\eta \liminf_{\alpha\to +\infty} \left(\mu_\alpha^{4-n}r_\alpha^{n-2} \right),
\end{equation} 
  Hence, $\mathcal{H}(0)\leq 0$. At last, mimicking the end of the proof of Proposition \ref{p64} shows that \eqref{eqp642bis} holds true, which completes the proof of Proposition \ref{p64bis}.
  \end{proof}
It is now straightforward to see that the arguments developed in Section \ref{APriBdCritCase} apply to our sequence $((u_\alpha,v_\alpha))_\alpha$ of blowing-up solutions of \eqref{Kstab}, thanks to Proposition \eqref{p64bis}. In particular, we obtain a contradiction with \eqref{maxu}, as in the last part of Section \ref{APriBdCritCase}. By elliptic theory with \eqref{unifbd} and \eqref{Kstabbis}, we get that $((u_\alpha,v_\alpha))_\alpha$ is bounded in $C^{1,\beta}$ for all $\beta\in(0,1)$. This completes the proof of Proposition \ref{necess}. 
 \end{proof}
  
  Blow-up solutions for equations like \eqref{Kstabbis} can be found in Esposito, Pistoia and V\'etois \cite{EspositoPistoia}, 
  Hebey \cite{HebeyBookZurich}, and Robert and V\'etois \cite{RobVet1,RobVet2}. We refer also to 
  Druet, Hebey and V\'etois \cite{kgmp4DHV}, and Hebey and Wei \cite{HebWei1} on what concerns 
  \eqref{kg},  and to 
  Clapp, Ghimenti and Micheletti \cite{ClapGhiMic} and Ghimenti and Micheletti \cite{GhiMich} for the semiclassical setting associated to \eqref{kg}.

 \bibliographystyle{abbrv}
 \bibliography{Questions}
\end{document}